\newtheorem{theorem}{Theorem}
\newtheorem*{theorem*}{Theorem}
\newtheorem{lemma}[theorem]{Lemma}
\newtheorem{claim}[theorem]{Claim}
\theoremstyle{definition}
\newtheorem{definition}[theorem]{Definition}
\theoremstyle{remark}
\newcommand{\ignore}[1]{} % for commenting large passages
\newcommand{\cC}{\ensuremath{\mathcal C}}
\newcommand{\cF}{\ensuremath{\mathcal F}}
\newcommand{\cI}{\ensuremath{\mathcal I}}
\newcommand{\cP}{\ensuremath{\mathcal P}}
\newcommand{\cT}{\ensuremath{\mathcal T}}
\newcommand{\NN}{\ensuremath{\mathbb N}}
\newcommand{\PP}{\ensuremath{\mathbb P}}
\newcommand{\eps}{\varepsilon}
\renewcommand{\phi}{\varphi}
\newcommand{\abs}[1]{\lvert#1\rvert}
\renewcommand{\P}{\mathrm{P}}
\newcommand{\polylog}{\mathrm{polylog}}
\newcommand{\Hknp}{H^k(n, p)}
\newcommand{\lmax}{t_{\text{max}}}
\newcommand{\dmax}{d_{\text{max}}}
\newcommand{\fo}{\textup{fo}}
\newcommand{\reg}{\textup{reg}}
\newcommand{\flower}{\text{\tiny \SixFlowerPetalDotted}}
\newcommand{\HFi}{K_3^{+k}}
\newcommand{\HFii}{C_{t_k}}
\newcommand{\Tlk}{T^k_{\ell}}
\newcommand{\binoms}[2]{{\textstyle\binom{#1}{#2}}} % small binomial
\renewcommand{\phi}{\varphi}
\renewcommand{\P}{\PP}
\date{}
\title{\vspace{-0.7cm}Symmetric and asymmetric Ramsey properties in random hypergraphs}
\author{
	Luca Gugelmann \thanks{
		Institute of Theoretical Computer Science, ETH Zurich, 8092 Zurich, Switzerland. \newline
		Email: {\tt lgugelmann|nskoric|steger|hthomas@inf.ethz.ch}.
	}
	\and
	Rajko Nenadov \thanks{
		School of Mathematical Sciences, Monash University, Melbourne, Australia. \newline 
		Email: {\tt rajko.nenadov@monash.edu}.
	} 
	\and
	Yury Person \thanks{
		Institute of Mathematics, Goethe-Universit\"at, 60325 Frankfurt am Main, Germany. \newline
		Email: {\tt person@math.uni-frankfurt.de}. The author was supported by DFG grant PE 2299/1-1.
	} 
	\and
	Nemanja \v Skori\'c \footnotemark[1]
	\and
	Angelika Steger \footnotemark[1]
	\and 
	Henning Thomas \footnotemark[1]
}
\begin{document}
\maketitle

\begin{abstract}
A celebrated result of R\"odl and Ruci\'nski states that for every
graph \(F\), which is not a forest of stars and paths of length $3$, and fixed number of colours \(r\ge 2\) there exist positive constants \(c, C\) such that for \(p \leq cn^{-1/m_2(F)}\) the probability that
every colouring of the edges of the random graph \(G(n,p)\) contains a monochromatic copy
of \(F\) is \(o(1)\) (the ``0-statement''), while for \(p \geq Cn^{-1/m_2(F)}\) it is
\(1-o(1)\) (the ``1-statement''). Here $m_2(F)$ denotes the $2$-density of $F$. On the other hand, the case where $F$ is a forest of stars has a coarse threshold which is determined by the appearance of a certain small subgraph in $G(n, p)$.

Recently, the natural extension of the 1-statement of this theorem to \(k\)-uniform hypergraphs was proved by Conlon and Gowers and, independently, by Friedgut, R\"odl and Schacht. In particular, they showed an upper bound of order $n^{-1/m_k(F)}$ for the $1$-statement, where $m_k(F)$ denotes the $k$-density of $F$. Similarly as in the graph case, it is known that the threshold for star-like hypergraphs  is given by the appearance of small subgraphs. In this paper we show that another type of thresholds exists if $k \ge 4$: there are $k$-uniform hypergraphs for which the threshold is determined by the asymmetric Ramsey problem in which a different hypergraph has to be avoided in each colour-class. 

Along the way we obtain a general bound on the $1$-statement for asymmetric Ramsey properties in random hypergraphs. This extends the work of Kohayakawa and Kreuter, and of Kohayakawa, Schacht and Sp\"ohel who showed a similar result in the graph case. 
We prove the corresponding 0-statement for hypergraphs satisfying certain balancedness  conditions.
\end{abstract}

%%%% INTRODUCTION
%!TEX root = hypergraphs.tex
\section{Introduction}
\label{sec:intro}

Given graphs (or hypergraphs) $G$ and $F$, we denote by
$$
  G \rightarrow (F)_r
$$
the property that every colouring of the edges of
\(G\) with \(r\) colours contains a monochromatic copy of \(F\). If $r = 2$ we simply write $G \rightarrow (F)$. For example, Ramsey's theorem shows that for every two integers $\ell, r \ge 2$  there exists an integer $R(\ell,r)$ such that
$K_{R(\ell,r)} \rightarrow (K_\ell)_r$. In other words, every $r$-edge-colouring of a sufficiently large complete graph contains a monochromatic complete graph with $\ell$ vertices. In this paper we are interested in the case where $G$ is a \emph{binomial random graph}. % That is, whenever we color a large enough clique we will always find some local 'structure' (in form of a monochromatic subgraph).

A random graph \(G(n,p)\) is a graph on \(n\) vertices and each possible
edge is present with probability \(p\), independently of all other edges. 
The study of Ramsey-type questions in random graphs was initiated by \L{}uczak, Ruci\'nski and
Voigt  \cite{luczak1992ramsey} where, among other results, they established the 
threshold for 
$$
G(n,p) \rightarrow (K_3).
$$ 
In a subsequent series of
papers R\"odl and Ruci\'nski \cite{rodl1993lower,rodl1994random,rodl1995threshold}
fully solved the edge colouring problem (up to one corner case, noticed later
by Friedgut and Krivelevich \cite{FrKr00}). To state their results we first need
the following definition, which we give in the more general form for
\(k\)-uniform hypergraphs. For any graph or hypergraph \(G\) we denote by
\(e(G)\) and \(v(G)\) the number of its edges and vertices, respectively. 
Set
\begin{equation}
  \label{eq:def-m-k}
  d_k(G) :=
  \begin{cases}
    0 & \text{if \(e(G) = 0\)}\\
    1/k & \text{if \(e(G) = 1, v(G) = k\)} \\
    \frac{e(G)-1}{v(G)-k} & \text{otherwise},
  \end{cases}
\end{equation}
and
$$
  m_k(G) := \max_{H\subseteq G} d_k(H).
$$
We refer to $m_k(G)$ as the \emph{\(k\)-density} of \(G\). If \(d_k(G) = m_k(G)\) we say that $G$ is \emph{\(k\)-balanced}, and it is \emph{strictly
  \(k\)-balanced} if all strict subgraphs of \(G\) have smaller 
\(k\)-density. With $\Delta(G)$ we denote the maximum vertex degree of~$G$. 

\begin{theorem}[\cite{FrKr00,rodl1993lower,rodl1995threshold}]\label{thm:graph-RR}
  Let \(F\) be a graph with at least one edge and \(r \geq 2\).
  \begin{enumerate}[(i)]
  \item If \(F\) is a forest of stars, then
    \begin{equation*}
      \lim_{n\to\infty} \P[G(n,p) \rightarrow (F)_r] =
      \begin{cases}
        0 & \text{if \(p \ll n^{-1-1/(r(\Delta(F)-1)+1)}\)}\\
        1 & \text{if \(p \gg n^{-1-1/(r(\Delta(F)-1)+1)}\)}.
       \end{cases}
    \end{equation*}
  \item If \(r = 2\) and \(F\) is a forest of stars and at least one path with exactly 3
    edges, then there exists a constant \(C\) such that
    \begin{equation*}%\label{eq:rr-p3}
      \lim_{n\to\infty} \P[G(n,p) \rightarrow (F)] =
      \begin{cases}
        0 & \text{if \(p \ll n^{-1/m_2(F)} = n^{-1}\)}\\
        1 & \text{if \(p \geq Cn^{-1/m_2(F)} = Cn^{-1}\)}.
      \end{cases}
    \end{equation*}
  \item \label{RRthm:iii}
   In all other cases there exist constants \(c = c(F,r)\) and \(C =
    C(F,r)\) such that
    \begin{equation*}%\label{eq:rr-general-case}
      \lim_{n\to\infty} \P[G(n,p) \rightarrow (F)_r] =
    \begin{cases}
        0 & \text{if \(p \leq cn^{-1/m_2(F)}\)}\\
        1 & \text{if \(p \geq Cn^{-1/m_2(F)}\)}.
    \end{cases}
  \end{equation*}
  \end{enumerate}
\end{theorem}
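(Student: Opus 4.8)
I would treat the three regimes by essentially disjoint arguments, the substance being concentrated in case~(iii); let me dispose of the exceptional families (i) and (ii) first. For a single star $S_\Delta$ with $\Delta$ edges there is the clean equivalence: an $r$-edge-colouring of a graph $G$ has no monochromatic $S_\Delta$ iff every colour class has maximum degree $\le\Delta-1$, and by a standard Euler-tour / greedy argument such a colouring exists precisely when $\Delta(G)\le r(\Delta-1)$. Hence $G\to(S_\Delta)_r$ iff $G$ has a vertex of degree $t:=r(\Delta-1)+1$, and for a forest of stars one takes the largest star among the components. The appearance of a vertex of degree $t$ (equivalently, of a copy of $S_t$) has threshold $\Theta(n^{-1-1/t})$ by a routine first/second-moment computation, which gives~(i). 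For~(ii) a more elaborate but still elementary analysis shows that $G\to(F)$ with $r=2$ is governed by the component structure of $G$: for example, when $F$ is the path with three edges, each colour class must decompose into stars and triangles, so each colour class has at most $n$ edges; thus below $p=o(n^{-1})$ — where $G(n,p)$ has only tree and unicyclic components, each of which admits a $2$-colouring with no monochromatic $F$ — the $0$-statement holds, while for $p\ge Cn^{-1}$ with $C$ a suitable constant the bound $e(G(n,p))>2n$ (which then holds whp) already precludes any such colouring, giving the (coarse) $1$-statement; a similar analysis handles the other members of family~(ii). I do not expect these cases to be the main difficulty.

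\textbf{Case (iii), the $1$-statement.} I would not try to reduce $F$: the Ramsey arrow is not monotone under passing to subgraphs of $F$, and for non-bipartite $F$ the $1$-statement does \emph{not} follow from the (easier) ``sparse Tur\'an'' fact that every positive-density subgraph of $G(n,p)$ contains a copy of $F$. Rather, $G(n,p)\to(F)_r$ should be obtained directly, either (a) via R\"odl--Ruci\'nski's sparse regularity method together with an embedding/counting lemma for sparse pseudorandom bipartite graphs, or (b) via the hypergraph container method (Nenadov--Steger; Balogh--Morris--Samotij; Saxton--Thomason) — equivalently, via the transference principle of Conlon--Gowers and Friedgut--R\"odl--Schacht already quoted in the introduction for the hypergraph $1$-statement. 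For route~(b): apply the container lemma to the $v(F)$-uniform hypergraph on vertex set $E(K_n)$ whose edges are the copies of $F$, obtaining for each $\delta>0$ a family $\mathcal C$ of ``containers'' with $|\mathcal C|\le\exp\!\bigl(O_\delta(n^{\,2-1/m_2(F)})\bigr)$ such that every $F$-free graph on $[n]$ lies in some $C\in\mathcal C$ and each $C\in\mathcal C$ spans at most $\delta\binom{n}{v(F)}$ copies of $F$ (verifying the supersaturation and co-degree hypotheses is a routine computation resting on $m_2(F)$). If $G(n,p)\not\to(F)_r$, write $E(G(n,p))=E_1\cup\dots\cup E_r$ with each $G[E_i]$ being $F$-free, pick containers $C_i\supseteq E_i$, and run a union bound over the (few) choices of $(C_1,\dots,C_r)$, using that whp $G(n,p)$ carries $\Theta\bigl(n^{v(F)}p^{e(F)}\bigr)$ copies of $F$ whereas each $G(n,p)\cap C_i$ carries only $O\bigl(\delta n^{v(F)}p^{e(F)}\bigr)$ of them. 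The delicate point — where I expect essentially all the work to lie — is that for non-bipartite $F$ the containers cannot be taken edge-sparse (each may have close to $\mathrm{ex}(n,F)$ edges, so $\bigcup_iC_i$ can be all of $K_n$ and a naive ``$G(n,p)\subseteq\bigcup_iC_i$ is unlikely'' estimate is useless); handling this needs an iterative refinement of the containers and careful bookkeeping of which edges of $G(n,p)$ land in the dense core of a container versus its sparse remainder (in route~(a), the analogous difficulty is the sparse counting lemma). Taking $C$, hence $p$, large enough in terms of $F$ and $r$ then closes the union bound.

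\textbf{Case (iii), the $0$-statement.} Here one must exhibit, whp, an $r$-colouring of $G(n,p)$ with no monochromatic $F$, for $p\le cn^{-1/m_2(F)}$ with $c=c(F,r)$ small; the proof is constructive. I would first reduce to a strictly $2$-balanced ``core''. If $F$ has a cycle, let $F_0\subseteq F$ be minimal with $d_2(F_0)=m_2(F)$; then $F_0$ is strictly $2$-balanced and, having $d_2(F_0)>1$, contains a cycle, and since an $r$-colouring with no monochromatic $F_0$ has none of $F$ and $n^{-1/m_2(F_0)}=n^{-1/m_2(F)}$, it suffices to prove the $0$-statement for $F_0$. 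If instead $F$ is a forest, then — not being a forest of stars — it contains a path with three edges, and for $p\le cn^{-1}$ with $c$ small (so $G(n,p)$ is subcritical: all components are trees or unicyclic) every component can be $2$-coloured with no monochromatic such path, hence with no monochromatic $F$, settling this case. So assume $F$ is strictly $2$-balanced with a cycle. Next, a probabilistic ``local sparsity'' lemma: for $c$ small, whp $G(n,p)$ contains no subgraph arising as the union of a bounded number of copies of $F$ that successively overlap in at least one edge and whose union spans too few vertices — a plain first-moment estimate, the precise accounting in which pins down $c$. Finally, the combinatorial core: any graph with this local-sparsity property can be $r$-coloured with no monochromatic $F$, by processing the copies of $F$ one at a time and, for each surviving copy, recolouring a bounded neighbourhood so as to destroy it, local sparsity ensuring that neighbourhood is tree-like enough that no new monochromatic copy is created. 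Making this recolouring scheme work — in particular for $r\ge3$, where it is markedly more intricate than the $r=2$ case originally treated by R\"odl--Ruci\'nski — is the main obstacle of this direction; it is also precisely where the exclusion of families~(i) and~(ii) is forced, since for a star the many copies through a common centre violate the required sparsity, which is why those graphs obey the different thresholds above.
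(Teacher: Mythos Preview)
This theorem is not proved in the paper: it is quoted as a known result from \cite{FrKr00,rodl1993lower,rodl1995threshold}, and the paper explicitly refers the reader elsewhere (to \cite{nenadov2016short}) for a short proof of part~(iii). So there is no ``paper's own proof'' to compare your proposal against.

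That said, your outline is a reasonable summary of how the literature actually establishes the three cases. The container route you sketch for the 1-statement in (iii) is exactly the Nenadov--Steger argument the paper cites, and your 0-statement plan (reduce to a strictly $2$-balanced subgraph, first-moment local-sparsity lemma, then a deterministic colouring of every locally sparse graph) is the R\"odl--Ruci\'nski scheme. One small inaccuracy worth flagging: in your treatment of case~(ii) you assert that for $p=o(n^{-1})$ every tree or unicyclic component admits a $2$-colouring with no monochromatic $P_3$. This is false for unicyclic graphs in general --- the paper itself points out that the ``sunshine'' graph $S_\ell^*$ (a cycle $C_\ell$ with a pendant at each vertex) satisfies $S_\ell^*\rightarrow(P_3)$ for odd $\ell\ge5$, and $S_\ell^*$ is unicyclic. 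The fix is easy: for $p\ll n^{-1}$ the expected number of cycles in $G(n,p)$ is $o(1)$, so whp $G(n,p)$ is a forest, and your depth-parity colouring then applies. But as stated your claim would fail precisely at the obstruction that forces case~(ii) to have a coarse rather than a semi-sharp threshold.
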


We refer the reader to \cite{nenadov2016short} for a short proof of part (iii) of Theorem \ref{thm:graph-RR}. 
Theorem \ref{thm:graph-RR}~\eqref{RRthm:iii}  was further strengthened by Friedgut and Krivelevich~\cite{FrKr00} and by Friedgut, R\"odl, Ruci\'nski and Tetali \cite{friedgut2006sharp} as follows. Friedgut and Krivelevich~\cite{FrKr00} proved the existence of a \emph{sharp} threshold for all forests $F$ and any number of colours where~\eqref{RRthm:iii} of Theorem~\ref{thm:graph-RR} applies. Friedgut, R\"odl, Ruci\'nski and Tetali \cite{friedgut2006sharp} showed the existence of a sharp threshold in the two-colour case where $F = K_3$. The latter result was recently extended to a more general class of graphs by Schacht and Schulenburg \cite{schacht2016sharp} who built on the ideas of Friedgut, H\`an, Person and Schacht~\cite{FHPS16}.  

Note that the expected number of copies of a graph \(F\) in
\(G(n,p)\) is \(\Theta(n^{v(F)}p^{e(F)})\) and the expected number of edges
 is \(\Theta(n^2p)\). The result above thus essentially states that for a balanced graph~$F$ the
transition from the 0- to the 1-statement happens around the value of \(p\) for which
these two quantities are roughly equal. In other words, if the expected number
of copies of \(F\) per edge is smaller than some small constant \(c'\) then
colouring without monochromatic \(F\) is possible, while if this number is bigger than a large constant \(C'\) then a monochromatic \(F\) always appears.
This can be explained by the following intuition: %if each edge, in expectation, belongs to less than one copy of \(F\) then these copies do not overlap much and thus are easy to colour while avoiding a monochromatic copy. 
if each copy of $F$ contains an edge which does not belong to any other copy of $F$ then by colouring all such edges with red and every other edge with blue clearly gives a colouring without a monochromatic copy of $F$. If, on the other hand, each edge is contained in many copies of \(F\) then these must overlap heavily and a monochromatic copy is unavoidable.

There are two exceptional cases in Theorem~\ref{thm:graph-RR}: stars and paths
of length 3 (i.e. paths with exactly $3$ edges). In the case of a star \(S_\ell\) with \(\ell\) edges it is easy to
see by the pigeonhole principle that 
\(S_{r(\ell-1)+1} \rightarrow (S_\ell)_r\) for any \(r \geq 2\). In other words, as soon as a
star on \(r(\ell-1)+1\) edges appears in \(G(n,p)\) it is no longer possible
to colour it with \(r\) colours without a monochromatic \(S_\ell\). The threshold
for this event is asymptotically smaller than \(n^{-1/m_2(S_\ell)}\). In the case of \(P_3\)  and two colours a similar phenomenon
occurs. Given any cycle \(C_\ell\) of length \(\ell \geq 3\) we obtain a
``sunshine graph'' \(S_\ell^{*}\) by appending one pending edge to each vertex of
\(C_\ell\). For any odd \(\ell \geq 5\) it holds that \(S_\ell^{*}
\rightarrow (P_3)\). From standard results it follows that whenever \(p =
cn^{-1}\) there is a small but constant probability that \(G(n,p)\) contains
such a sunshine graph. Accordingly the 0-statement in (ii) cannot
be of the same type as in (iii).

While the graph case was solved completely by R\"odl and Ruci\'nski in the 90's, 
the hypergraph case is still open. Here we consider the random hypergraph model analogue to $G(n, p)$: $\Hknp$ is a hypergraph on $n$ vertices and each possible hyperedge with $k$ vertices is present with probability $p$, independently of all other hyperedges. R\"odl and Ruci\'nski \cite{rodl1998ramsey}
conjectured that the same intuition should hold as for Ramsey properties in the graph case, namely that a monochromatic
copy of \(F\) appears in every colouring whenever the expected number of copies
of \(F\) per hyperedge exceeds a large constant. They proved this for the complete 3-uniform
hypergraph on 4 vertices and 2 colours and, together with Schacht, extended it in \cite{rodl2007ramsey} to $k$-partite $k$-uniform
hypergraphs. Recently Friedgut, R\"{o}dl and Schacht~\cite{friedgut2010ramsey} proved the conjectured 1-statement  for all 
\(k\)-uniform hypergraphs. Similar results were obtained independently by Conlon and Gowers~\cite{conlon2013combinatorial}.

\begin{theorem}[\cite{conlon2013combinatorial,friedgut2010ramsey}]\label{thm:hypergraph-1-statement}
  Let \(F\) be a \(k\)-uniform hypergraph with maximum degree at least 2 and
  let \(r \geq 2\). There exists a constant \(C > 0\) such that for \(p =
  p(n)\) satisfying \(p \geq C n^{-1/m_k(F)}\) we have
  \begin{equation*}
    \lim_{n\to \infty} \P[\Hknp \rightarrow (F)_r] = 1.
  \end{equation*}
\end{theorem}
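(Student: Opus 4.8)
The plan is to prove this by the hypergraph container method, adapting to the $k$-uniform setting the container-based argument that Nenadov \cite{nenadov2016short} uses in the graph case (the original proofs of Conlon--Gowers and Friedgut--R\"odl--Schacht instead go through a sparse transference principle). Fix $r\ge 2$ and a $k$-uniform $F$ with $\Delta(F)\ge 2$, and write $N:=\binom nk$. Two edges of $F$ sharing a vertex span at most $2k-1$ vertices, so $m_k(F)\ge\tfrac1{k-1}$ and in particular $1/m_k(F)<k$. We will need one deterministic ingredient: by Ramsey's theorem $\Knk\rightarrow(F)_r$ for large $n$, and a standard averaging argument upgrades this to the existence of $\alpha=\alpha(F,r)>0$ such that every $r$-colouring of $E(\Knk)$ contains at least $\alpha n^{v(F)}$ monochromatic copies of $F$; the same holds with $\alpha/2$ in place of $\alpha$ for \emph{partial} colourings leaving at most $\gamma\binom nk$ hyperedges uncoloured, as long as $\gamma=\gamma(F,r)$ is small enough, since at most $e(F)\cdot\gamma\binom nk\cdot n^{v(F)-k}=O(\gamma n^{v(F)})$ copies of $F$ meet an uncoloured hyperedge.

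Next, apply the hypergraph container theorem of Balogh--Morris--Samotij / Saxton--Thomason to the $e(F)$-uniform ``copy hypergraph'' $\mathcal H$ with vertex set $E(\Knk)$ whose hyperedges are the edge sets of copies of $F$, so that $e(\mathcal H)=\Theta(n^{v(F)})$. For $2\le j\le e(F)$ the maximum number of copies of $F$ through $j$ fixed hyperedges has order $n^{v(F)-v_j}$, where $v_j:=\min\{v(J):J\subseteq F,\ e(J)=j\}$, and a short computation shows $\max_{2\le j\le e(F)}\tfrac{j-1}{v_j-k}=m_k(F)$ (the hypothesis $\Delta(F)\ge2$ rules out the degenerate value $\tfrac1k$). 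This is exactly the condition needed to run the container theorem with $\tau=\Theta(n^{-1/m_k(F)})$: for every $\delta>0$ it yields a family $\mathcal C$ of subhypergraphs of $\Knk$ and, for each $F$-free $G'\subseteq\Knk$, a ``fingerprint'' $T(G')\subseteq G'$ with $|T(G')|\le\beta n^{k-1/m_k(F)}$ (where $\beta=\beta(F,r,\delta)$), such that $G'\subseteq C(T(G'))\in\mathcal C$ and every $C\in\mathcal C$ spans at most $\delta\,e(\mathcal H)=\epsilon n^{v(F)}$ copies of $F$, with $\epsilon\to0$ as $\delta\to0$. We fix $\epsilon$, hence $\delta$, so that $r\epsilon<\alpha/3$, then $\gamma$ small as above, and only at the very end the constant $C=C(F,r)$ large.

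For the probabilistic step, suppose $\Hknp\not\rightarrow(F)_r$ and fix a good $r$-colouring with classes $G_1,\dots,G_r$; each $G_i$ is $F$-free, so with $T_i:=T(G_i)\subseteq G_i\subseteq E(\Hknp)$ and $C_i:=C(T_i)$ we get $E(\Hknp)\subseteq B:=C_1\cup\dots\cup C_r$. I claim $|B|\le(1-\gamma)\binom nk$: otherwise, colouring each hyperedge of $B$ by the least $i$ with $e\in C_i$ gives a partial $r$-colouring of $\Knk$ missing at most $\gamma\binom nk$ hyperedges, hence at least $(\alpha/2)n^{v(F)}$ monochromatic copies of $F$; but every such copy in colour $i$ lies inside $C_i$, so $r\epsilon n^{v(F)}\ge(\alpha/2)n^{v(F)}$, contradicting $r\epsilon<\alpha/3$. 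Summing over $r$-tuples of subsets of $E(\Knk)$ of size at most $\beta n^{k-1/m_k(F)}$,
\[
  \P\big[\Hknp\not\rightarrow(F)_r\big]\ \le\ \sum_{(T_1,\dots,T_r)}\P\Big[\textstyle\bigcup_i T_i\subseteq E(\Hknp)\ \text{ and }\ E(\Hknp)\cap\overline{B}=\emptyset\Big].
\]
Since $\bigcup_iT_i\subseteq B$, the two events concern disjoint sets of hyperedges, so each summand is at most $p^{|\cup_iT_i|}(1-p)^{\gamma\binom nk}$; grouping the tuples by $s:=|\bigcup_iT_i|\le r\beta n^{k-1/m_k(F)}$, there are at most $\binom Ns 2^{rs}$ of them for each value of $s$, so the sum is at most $(1-p)^{\gamma\binom nk}\sum_{s\le r\beta n^{k-1/m_k(F)}}\binom Ns(2^r p)^s$. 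The crucial point is that keeping the factor $p^s$ next to $\binom Ns$ makes the base of $\binom Ns(2^r p)^s$ of order $Np/s=\Theta(C)$, independent of $n$: the $\log n$ that a plain count of fingerprints would contribute is cancelled by $p^s=\exp(-\tfrac{s}{m_k(F)}\log n+s\log C)$. A direct estimate (worst case $p=\Theta(n^{-1/m_k(F)})$, where $Np=\Theta(Cn^{k-1/m_k(F)})$, with only slack to spare for larger $p$) bounds the whole expression by $\mathrm{poly}(n)\cdot\exp\!\big(n^{k-1/m_k(F)}\big(O_{F,r,\delta}(\log C)-\Omega_{F,r}(C)\big)\big)$, which tends to $0$ once $C$ is large, because $1/m_k(F)<k$ makes $n^{k-1/m_k(F)}\to\infty$.

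The two places that need genuine care are the following. First, verifying the codegree hypotheses of the container theorem for an arbitrary $k$-graph $F$ and matching the resulting parameter with $1/m_k(F)$ --- this is the calculation that pins down the location of the threshold; routine but unavoidable, and it is exactly where the hypothesis $p\ge Cn^{-1/m_k(F)}$ enters. Second, and in my view the real obstacle, making the final union bound close: the naive ``(number of fingerprints) $\times$ (failure probability per container)'' split loses a multiplicative $\log n$ in the exponent and does \emph{not} vanish, so one must keep the probabilistic weight $p^{|\cup_iT_i|}$ attached to the combinatorial count $\binom Ns$ and choose the constant $C$ only at the end; getting the order of the choices ($\epsilon$, then $\beta$, then $\gamma$, then $C$) right is the delicate part of the write-up.
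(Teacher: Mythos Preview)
The paper does not prove this theorem at all: it is quoted as a known result of Conlon--Gowers and Friedgut--R\"odl--Schacht, and used as a black box (e.g.\ in the proof of Theorem~\ref{thm:main_aux}). So there is no ``paper's own proof'' to compare against in the strict sense.

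That said, your container argument is correct and is precisely the hypergraph extension of the Nenadov--Steger proof, which the paper also cites. Your handling of the two genuine issues is right: the codegree check reducing to $\max_j (j-1)/(v_j-k)=m_k(F)$ is the content of the paper's Theorem~\ref{thm:container} applied with $F$, and the point that one must keep the weight $p^{|\cup T_i|}$ attached to the fingerprint count (rather than split the union bound naively) is exactly what makes the exponent $O(\log C)-\Omega(C)$ rather than $O(\log n)-\Omega(C)$.

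It is worth noting how your route differs from what the paper \emph{does} prove. In Section~\ref{sec:asymm_proof} the paper establishes the asymmetric $1$-statement (Theorem~\ref{thm:asymmetric-1}) by a container argument with a different architecture: containers are applied only to colours $2,\dots,r$, while colour~$1$ is handled via Janson's inequality on the ``remainder'' set $R$, together with a structural decomposition into isolated and non-isolated $F_1$-copies and an FKG step to decouple the increasing and decreasing events. Specialised to the symmetric case $F_1=\dots=F_r=F$, that argument recovers Theorem~\ref{thm:hypergraph-1-statement} only for \emph{strictly $k$-balanced} $F$ (since Lemmas~\ref{lemma:funionf} and~\ref{lemma:janson} need strict balancedness with respect to $m_k(\cdot,F_2)$). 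Your approach, applying containers to \emph{all} colours and replacing Janson by the elementary observation $E(\Hknp)\subseteq B$ with $|\overline B|\ge \gamma\binom nk$, is structurally simpler and needs no balancedness hypothesis on $F$ beyond $\Delta(F)\ge 2$; the price is that it does not adapt to the genuinely asymmetric setting, where the fingerprint size $n^{k-1/m_k(F_1)}$ for colour~$1$ would exceed the threshold scale $n^{k-1/m_k(F_2)}$ and the union bound would fail.
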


It was shown in \cite{nenadov2015algorithmic} that $n^{-1/m_k(F)}$ is indeed the threshold in the case where $F$ is a complete hypergraph and some further special classes were considered in \cite{thomas2013}. However, the complete characterisation, like the one in Theorem \ref{thm:graph-RR}, is still not known. As an evidence that such a characterisation might not be as simple as in the graph case, we show that there exists another type of thresholds if $k \ge 4$. In particular, contrary to the graph case we show that there exist hypergraphs for which the threshold is neither the conjectured $n^{-1/m_k(F)}$ nor is it determined by the appearance of a small subgraph. The following theorem is our first main contribution.

\begin{theorem} \label{thm:main_beta}
  For every $k \ge 4$ there exists a $k$-uniform hypergraph $F$ and positive constants $1 < \theta < m_k(F)$ and $c, C > 0$ such that
  $$
    \lim_{n\to\infty} \P[\Hknp \rightarrow (F)] =
      \begin{cases}
        0 & \text{if \(p \leq cn^{-1/\theta}\)}\\
        1 & \text{if \(p \geq Cn^{-1/\theta}\)}.
      \end{cases}    
  $$
\end{theorem}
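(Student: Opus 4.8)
The plan is to construct $F$ as a two-part hypergraph: a "core" hypergraph $J$ whose $k$-density $\theta := m_k(J)$ is the intended threshold exponent, together with a "booster" part whose role is to force the asymmetric Ramsey phenomenon. Concretely, I would take $F$ to consist of a copy of $J$ plus, attached to it, two further hypergraphs $F_1, F_2$ such that the asymmetric Ramsey threshold for the pair $(F_1, F_2)$ — the threshold for the property that every red/blue colouring yields a red $F_1$ or a blue $F_2$ — sits at exactly $n^{-1/\theta}$, while $m_k(F) > \theta$ (so the conjectured symmetric threshold $n^{-1/m_k(F)}$ is strictly below $n^{-1/\theta}$, i.e. occurs at a larger $p$). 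The point of $k \ge 4$ is that one has enough room in a $k$-set to build hypergraphs $J$, $F_1$, $F_2$ with the arithmetic $1 < \theta < m_k(F)$ and the asymmetric densities lining up; for $k = 2, 3$ the available densities are too rigid. The general asymmetric $1$-statement and $0$-statement promised in the abstract (the hypergraph analogue of Kohayakawa--Kreuter) are exactly the tools that pin the asymmetric threshold at $n^{-1/\theta}$.

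For the \textbf{1-statement} ($p \ge Cn^{-1/\theta}$): a monochromatic copy of $F$ is forced because $F$ contains $J$, and we want to argue that any $2$-colouring of $\Hknp$ that avoids a monochromatic $F$ would, in particular, have to avoid a red $F_1$ extending into a red $J$-part and a blue $F_2$ extending into a blue $J$-part; invoking the (asymmetric) hypergraph container / transference machinery at $p \ge Cn^{-1/\theta}$ shows this is impossible a.a.s. The clean way is: design $F$ so that $F \to (F)$ is implied by the asymmetric statement "every colouring has a red $F_1$ or a blue $F_2$ inside a suitable supergraph", then quote Theorem~\ref{thm:hypergraph-1-statement}-style arguments (or the asymmetric generalisation established in this paper) with $m_k$ replaced by the asymmetric density $m_k(F_1, F_2) = \theta$.

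For the \textbf{0-statement} ($p \le cn^{-1/\theta}$): the heart of the matter is to exhibit, a.a.s., a valid $2$-colouring of $\Hknp$ with no monochromatic $F$. Here I would use the asymmetric colouring strategy: colour edges so that the red graph contains no copy of $F_1$ and the blue graph contains no copy of $F_2$, which is possible below the asymmetric threshold $n^{-1/\theta}$ provided the relevant balancedness conditions hold (this is precisely the hypergraph $0$-statement for asymmetric Ramsey properties that the paper proves). Since every copy of $F$ contains a copy of $F_1$ and a copy of $F_2$ sharing the core $J$, a copy of $F$ monochromatic in red would give a red $F_1$ and a copy monochromatic in blue would give a blue $F_2$ — both excluded. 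One must check that with positive-density-bounded-away-from-$1$ one actually has $\lim = 0$ and not merely "not $1$"; this follows because the colouring can be produced deterministically from the structure of $\Hknp$ with probability $1 - o(1)$, using that below $n^{-1/\theta}$ the dangerous overlapping configurations (the hypergraph analogues of sunshines) do not appear.

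The \textbf{main obstacle} I anticipate is the \emph{0-statement}, specifically verifying the balancedness hypotheses needed for the asymmetric $0$-statement to apply to the particular $F_1, F_2$ we build, and ruling out \emph{all} alternative monochromatic-$F$-forcing mechanisms below $n^{-1/\theta}$ — one must ensure that neither the symmetric Ramsey threshold nor any "small subgraph appearance" threshold (the star/sunshine-type phenomena from Theorem~\ref{thm:graph-RR}) interferes at $p \le cn^{-1/\theta}$. In practice this means choosing $F$ so that $F$ is sufficiently balanced, has no low-density sub-configuration that becomes Ramsey-forcing early, and so that the asymmetric pair $(F_1,F_2)$ satisfies the technical density gap $m_k(F_1) > m_k(F_2)$ (or the appropriate strict inequality) required for the Kohayakawa--Kreuter-type $0$-statement. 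Getting a single explicit $F$ that simultaneously satisfies $1 < \theta = m_k(F_1,F_2) < m_k(F)$, the balancedness conditions, and the "$F_1 \subseteq F \supseteq F_2$ with shared core" structure is the delicate combinatorial design step; the probabilistic arguments then follow the established template.
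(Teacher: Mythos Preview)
Your $0$-statement strategy is essentially the paper's: build $F$ so that it contains both $F_1$ and $F_2$, and below the asymmetric threshold $n^{-1/m_k(F_1,F_2)}$ colour so that red avoids $F_1$ and blue avoids $F_2$; then neither colour class can host a full $F$. (In the paper $F$ is simply the \emph{disjoint union} $F_1 \cup F_2$ --- there is no separate ``core'' $J$, and $\theta$ is the asymmetric density $m_k(F_1,F_2)$, not $m_k$ of some auxiliary piece.)

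The $1$-statement, however, has a genuine gap. At $p \ge Cn^{-1/\theta}$ the asymmetric $1$-statement only guarantees a red $F_1$ \emph{or} a blue $F_2$; neither alone produces a monochromatic $F = F_1 \cup F_2$, since for that you need an $F_1$ \emph{and} an $F_2$ of the \emph{same} colour. Your sketch never explains how to upgrade ``red $F_1$ or blue $F_2$'' to ``monochromatic $F$'', and this is precisely where the nontrivial idea sits. The paper's solution is to choose $F_1$ very specifically as the star-like hypergraph $K_3^{+k}$, whose \emph{symmetric} Ramsey threshold is governed by the appearance of a small subgraph: $K_6^{+k} \to (K_3^{+k})$ and $m(K_6^{+k}) = 15/(k+4) < \theta$ for $k \ge 4$. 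One then partitions the vertices into three parts and applies the asymmetric $1$-statement in parts $1$ and $2$ with the roles of the colours swapped; a short case analysis leaves only two uncovered cases --- a monochromatic $F_1$ of each colour, or a monochromatic $F_2$ of each colour. The first is closed by the symmetric $1$-statement for $F_2$ in part $3$ (valid since $m_k(F_2) < \theta$); the second is closed because part $3$ a.a.s.\ contains $K_6^{+k}$ and hence a monochromatic $F_1$. Without this ``small-subgraph Ramsey'' property of $F_1$ the case analysis does not close, and the inequality $15/(k+4) < \theta$ is exactly why the construction needs $k \ge 4$. Your proposal treats the asymmetric $1$-statement as if it directly forced a monochromatic $F$; it does not, and the missing ingredient is this deliberate choice of $F_1$ with an anomalously low symmetric Ramsey threshold together with the three-part case analysis.
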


Note that if the threshold for a hypergraph $F$ given by the previous theorem was determined by the appearance of a certain small hypergraph then, for the $0$-statement to hold, we would necessarily need to have $p \ll n^{-1/\theta}$ (we elaborate more on this in Section \ref{sec:main-proof}). The value of $\theta$ in Theorem~\ref{thm:main_beta} corresponds to the threshold for the \emph{asymmetric Ramsey} property, which we describe next. In Section \ref{sec:proof_outline} we then give a brief overview of the construction of $F$ and make the connection to the asymmetric case.

\subsection{Asymmetric Ramsey properties}

Instead of avoiding a monochromatic copy of the same hypergraph \(F\) in all
colours, in the asymmetric case we want to avoid a hypergraph
\(F_1\) in red, a hypergraph \(F_2\) in blue, and so on for all \(r \geq 2\)
colours. Similarly as before, let 
$$
  G \rightarrow (F_1, \ldots, F_r)
$$
denote the property that every
colouring of the edges of \(G\) with $r$ colours contains at least one monochromatic copy of
\(F_i\) in its respective colour (for some $1 \le i \le r$). Clearly, if all \(F_i\) are equal this reduces to the previously discussed (symmetric) case.

In the context of random graphs, this problem was first studied by Kohayakawa and Kreuter \cite{kohayakawa1997threshold} where they determined the threshold for the
case where each $F_i$ is a cycle. They
also conjectured that in the general case the threshold is determined by the
function below. % in the same sense that \(m_k(\cdot)\) determines the threshold for the symmetric Ramsey problem.
Here we state the extension for \(k\)-uniform hypergraphs, while the original conjecture concerns only the case \(k
= 2\).
\begin{definition}\label{def:mk-asymm}
  Let \(F_1\), \(F_2\) be two \(k\)-uniform hypergraphs with at least one
  edge and such that \(m_k(F_1) \geq m_k(F_2)\). The \emph{asymmetric $k$-density} is defined as follows,
  \begin{equation}
    \label{eq:def-of-mk-asymm}
    m_k(F_1, F_2) = \max \left\{ \frac{e(F_1')}{v(F_1') - k + 1/m_k(F_2)} \; : \;
    F_1'\subseteq F_1, e(F_1') \geq 1 \right\}.
  \end{equation}
\end{definition}

Note that if \(m_k(F_1) = m_k(F_2)\) then \(m_k(F_1, F_2) = m_k(F_1)\) and otherwise $m_k(F_2) < m_k(F_1, F_2) < m_k(F_1)$. We say that
\(F_1\) is \emph{strictly balanced with respect to \(m_k(\cdot, F_2)\)} if no
proper subgraph \(F_1' \subsetneq F_1\) with at least one edge maximizes
\eqref{eq:def-of-mk-asymm}.

The intuition behind the conjectured value $n^{-1 / m_k(F_1, F_2)}$ for the asymmetric Ramsey property is easiest explained in the case $r = 3$ and $F_2 = F_3$. In other words, we have three colours and we aim to avoid a copy of $F_1$ in colour $1$ and a copy of $F_2$ in colours $2$ and $3$. First, observe that we can assign the colour $1$ to every edge which does not belong to a copy of $F_1$. Since $m_k(F_1, F_2) < m_k(F_1)$, for $p = \Theta(n^{-1/m_k(F_1, F_2)})$ we do not expect the copies of $F_1$ to overlap too much. Therefore, the number of edges which are left is of the same order as the number of copies of $F_1$, which is asymptotically $n^{v(F_1)} p^{e(F_1)}$. Assuming that these edges are randomly distributed (which is not entirely correct, but it gives a good intuition) this gives us a random hypergraph $H'$ with edge probability $p^* = n^{v(F_1) - k} p^{e(F_1)}$. Next, we use colours $2$ and $3$ for the hyperedges in $H'$. Now the same argument as in the symmetric case  applies: if the copies of $F_2$ are not overlapping heavily in $H'$ then it should be possible to assign two colours to the edges of $H'$ such that there is no monochromatic copy of $F_2$, and otherwise this is unavoidable. The reasoning as before shows that we expect this transition to happen around $p^* = n^{-1/m_k(F_2)}$. Putting all together, we obtain the value of $p$ given by the conjecture. 

In turns out that, in order to avoid a monochromatic $F_i$, if $p < cn^{-1/m_k(F_1, F_2)}$ the third colour is actually not needed. That is, we can assign colours $1$ and $2$ to $H'$ such that both monochromatic $F_1$ and $F_2$ are avoided. This is the reason why the conjectured threshold is determined only by the two graphs with largest \(k\)-density. Progress towards proving the
conjecture in the graph case was made by Marciniszyn et al. \cite{marciniszyn2009asymmetric}, where they confirmed it for complete graphs. They
also observed that the approach of Kohayakawa and Kreuter can be used to deduce the 1-statement for all graphs $F_1$ and $F_2$ which satisfy certain mild conditions, provided that the K\L R-conjecture holds (the K\L{}R-conjecture was verified recently by several groups of authors \cite{balogh2015independent,conlon2013klr,saxton2015hypergraph}). On the other hand, Kohayakawa, Schacht and Sp\"ohel \cite{kohayakawa2013upper} gave an alternative proof for the same result by using elementary means, with similar conditions on $F_1$ and $F_2$. Our second main contribution is an extension of these results to hypergraphs.

\begin{theorem} \label{thm:asymmetric-1}
Let $r \ge 2$ and $F_1, \ldots, F_r$ be $k$-uniform hypergraphs such that $m_k(F_1) \geq m_k(F_2) \geq \dotsb \geq m_k(F_r)$ and $F_1$ is strictly balanced with respect to $m_k(\cdot, F_2)$. Then there exists a constant $C > 0$ such that for $p \ge Cn^{-1/m_k(F_1, F_2)}$ we have
$$
  \lim_{n \rightarrow \infty} \Pr\left[ \Hknp \rightarrow (F_1, \ldots, F_r) \right] = 1.
$$
\end{theorem}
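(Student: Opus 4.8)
The plan is to reduce the $r$-colour statement to the two-colour case $F_1, F_2$ and then prove the latter by a container-style (or rather, direct deletion-plus-union-bound) argument adapting Kohayakawa--Schacht--Sp\"ohel to the hypergraph setting. For the reduction, observe that if $\Hknp \not\to (F_1,\dots,F_r)$ then there is a colouring avoiding all monochromatic $F_i$; merging colours $2,\dots,r$ into one class gives a $2$-colouring of $\Hknp$ in which colour $1$ has no $F_1$ and the merged class has no $F_2$ (since it has no $F_j$ for the specific $j$ with $m_k(F_j)\ge m_k(F_2)$, and in fact avoiding $F_2$ is the binding constraint because $m_k(F_2)\ge m_k(F_j)$ for $j\ge 3$ — one should be slightly careful here and really just use that the merged class avoids $F_2$ if $F_2 = F_j$ for some $j$, or more robustly run the whole argument with $F_2$ replaced by the relevant $F_j$; since $m_k(F_1,F_j)\le m_k(F_1,F_2)$ the bound is only easier). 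So it suffices to show $\Hknp \to (F_1,F_2)$ whp for $p \ge Cn^{-1/m_k(F_1,F_2)}$.

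For the two-colour statement, fix $p = Cn^{-1/m_k(F_1,F_2)}$ and let $p^* := n^{v(F_1)-k}p^{e(F_1)}$; by the choice of $p$ one has $p^* \ge C' n^{-1/m_k(F_2)}$ for large $C'$. I would argue by contradiction: suppose with non-negligible probability there is a valid red/blue colouring. The red graph $R$ contains no copy of $F_1$. The first key step is a \emph{supersaturation / sparsification} statement: in a hypergraph on $n$ vertices with at least $\delta n^k p$ edges, if $\delta$ is a small constant then the number of copies of $F_1$ is $\Omega(n^{v(F_1)}p^{e(F_1)})$ and, more importantly, one can find a large ``core'' family of copies of $F_1$ that are spread out — pairwise sharing few vertices — so that selecting one designated edge from each yields $\Omega(n^{v(F_1)}p^{e(F_1)})$ edges that behave like a pseudorandom hypergraph of density $\approx p^*$. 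This is where strict balancedness of $F_1$ with respect to $m_k(\cdot,F_2)$ enters: it guarantees that the dominant contribution to copies of $F_1$ does not come from subconfigurations glued along dense subgraphs, so the extracted hypergraph $H'$ on the blue edges genuinely looks like $H^k(n,p^*)$ at the relevant scale. Then, since $H^k(n,p^*) \to (F_2)$ whp by the symmetric $1$-statement (Theorem~\ref{thm:hypergraph-1-statement} with $r=2$, after transferring from the quasirandom $H'$ to a genuine random hypergraph via a standard embedding/sandwiching argument), the blue part must contain a copy of $F_2$ — contradiction. The cleanest way to make ``$H'$ looks like $H^k(n,p^*)$'' rigorous is to avoid it: instead run a union bound directly. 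Enumerate over all possible ``red'' hypergraphs $R$ that are $F_1$-free and have many edges; for each, the complement within $\Hknp$ must be $F_2$-free in blue. Using that $R$ being $F_1$-free forces a structural deficiency (via a hypergraph removal-type or Janson-type estimate), bound the probability that $\Hknp$ restricted off $R$ avoids $F_2$, and show the number of choices of $R$ times this probability is $o(1)$.

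The main obstacle — and the step I would spend the most care on — is the second moment / Janson estimate controlling the probability that the ``blue'' hypergraph, which is $\Hknp$ with a carefully chosen $F_1$-free sub-hypergraph removed, still fails to contain $F_2$. In the graph case (Kohayakawa--Schacht--Sp\"ohel) this is handled by a clever weighting of copies of $F_1$ together with a covering argument for copies of $F_2$, and the inequalities $m_k(F_1)\ge m_k(F_2)$ and the precise form of $m_k(F_1,F_2)$ are exactly what makes the relevant expectations balance. Transplanting this to $k$-uniform hypergraphs requires redoing the extremal-type counting lemmas with $d_k$ in place of $d_2$ everywhere, verifying that the ``$-k+1$'' vs ``$-k$'' bookkeeping in Definition~\ref{def:mk-asymm} propagates correctly, and checking that strict $m_k(\cdot,F_2)$-balancedness suppresses the error terms coming from non-generic intersections of copies of $F_1$. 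I expect no genuinely new idea is needed beyond this, but the combinatorial bookkeeping is delicate and is where an error would most plausibly hide.
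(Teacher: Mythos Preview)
Your reduction from $r$ colours to two colours is incorrect and cannot be repaired in the way you suggest. If $\Hknp \not\to (F_1,\dots,F_r)$ and you merge colours $2,\dots,r$, the merged class need not avoid $F_2$: each individual colour class $i\ge 2$ avoids only its own $F_i$, and their union may well contain copies of every $F_j$. Your attempted fix (``run the whole argument with $F_2$ replaced by the relevant $F_j$'') does not help either, since there is no single $j\ge 2$ for which the merged class is guaranteed to avoid $F_j$. Consequently the general-$r$ statement does not reduce to the case $r=2$, and the paper makes no such attempt: it treats all $r$ colours simultaneously by applying the container theorem (Theorem~\ref{thm:container}) separately to each colour class $E_i$ for $i\ge 2$, obtaining small fingerprints $T^i$ and containers $f_i(T^i)$ with fewer than $\alpha n^{v(F_i)}$ copies of $F_i$; the quantitative Ramsey statement (Theorem~\ref{thm:ramsey_count}) then forces the complement $R = E(K_n^{(k)}) \setminus \bigcup_{i\ge 2}(f_i(T^i)\cup L_i)$ to contain at least $\alpha n^{v(F_1)}$ copies of $F_1$.

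Even restricting to $r=2$, your sketch diverges from the paper and contains a second gap. The suggestion to ``run a union bound directly over all $F_1$-free red hypergraphs $R$'' is far too crude: there are $\exp\bigl(\Omega(n^k p)\bigr)$ such $R$, whereas the Janson-type failure probability you must beat is only $\exp\bigl(-\Theta(n^{k-1/m_k(F_2)})\bigr)$. An efficient encoding of the $F_i$-free classes is essential, and this is precisely what containers supply: the enumeration is over fingerprints $T^i$ of size $O(n^{k-1/m_k(F_i)}) \le O(n^{k-1/m_k(F_2)})$, which matches the Janson exponent and makes the union bound close. The strict balancedness of $F_1$ with respect to $m_k(\cdot,F_2)$ enters not through a sparsification step as you propose, but in bounding the number of \emph{non-isolated} copies of $F_1$ (pairs of copies sharing an edge), which controls the residual sets $L_i$ and also drives the Janson estimate. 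Your KSS-style pseudorandom-extraction heuristic captures the right intuition, but the paper bypasses it entirely in favour of the container approach of Nenadov--Steger combined with an FKG decoupling of the two relevant events; carrying KSS over to hypergraphs would require substantially more than relabelling $d_2$ as $d_k$.
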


\subsubsection{Sufficient criterion for the $0$-statement}

The corresponding $0$-statement for the asymmetric Ramsey properties remains open in its full generality. 
As mentioned earlier, even in the case $k = 2$ it is known only for some special classes of graphs, such as complete graphs and cycles. 
 Our modest contribution towards resolving these questions is a result that reduces the problem from random graph theory to a deterministic question, at least under a certain balancedness condition. To state it we first need a couple of definitions.

\begin{definition} \label{def:asymmetric:balanced}
%Let $e_0, \ldots, e_{e(F_2) - 1}$ be an arbitrary ordering of the edges in $F_2$.
For given $k$-uniform hypergraphs $F_1$ and $F_2$, let \(\mathcal{F}^*(F_1, F_2)\) be the family defined in the following way,
\[
\mathcal{F}^*(F_1, F_2) := 
\left \{ F^* \; : \;  \parbox{0.65\displaywidth}{$\exists  F_2^* \cong F_2, e_0\in E(F_2^*)\text{ and }\exists F_1^e \cong F_1$ for all
$e \in E(F_2^*) - e_0$ s.t. \\ 
$e \in E(F_1^e)$ for all $e \in E(F_2^*) - e_0$ and  $F^* =F_2^* \cup   \bigcup_{e \in E(F_2^*) - e_0} F_1^e$}
\right\}.
\]
Note that hypergraphs $F_{1}^e$ need not be distinct.
\end{definition}

Informally, every graph in the family $\mathcal{F}^*(F_1, F_2)$ is obtained by an amalgamation of copies of $F_1$ onto $F_2$ which cover all but at most one edge of $F_2$. 

%In case there are multiple \emph{witnesses} $e_0$ and $F_1^e$ for $F^* \in \mathcal{F}^*(F_1, F_2)$, we arbitrarily select one to be the representative. Then
Given a hypergraph $F^* \in \cF^*(F_1, F_2)$, we say that $e_0$ is an \emph{attachment edge} of $F^*$. Note that there can be more than one edge which fall under the definition of an attachment edge. Moreover, we say that a  member $F^*\in\mathcal{F}^*(F_1, F_2)$  is {\em generic}  %(and denote it by $F^\flower$) 
if the following holds: each $F_1^e$ intersects $F_2^*$ on exactly $k$ vertices (namely those which correspond to the intersecting edge $e$) and the remaining vertices of $F_1^e$ are disjoint from those of all other $F_1^{e'}$. 
Observe that there could be several nonisomorphic generic copies %$F^\flower$
 since an attachment edge can vary, and $F_1$ and $F_2$ need not be `symmetric'.

The main property that we require $\mathcal{F}^*$ to possess resembles that of strictly $k$-balancedness.

\begin{definition}[Asymmetric-balanced]
For given $k$-uniform hypergraphs $F_1$ and $F_2$, we say that $\mathcal{F}^*(F_1, F_2)$ is \emph{asymmetric-balanced} if the following two conditions are satisfied for all $F^* \in \mathcal{F}^*$ and all $H \subsetneq F^*$ with $V(H) \subsetneq V(F^*)$ that contain an attachment edge of $F^*$:
\begin{enumerate}
\item 	$$ \frac{e(F^*) - e(H)}{v(F^*) - v(H)} \geq m_k(F_1, F_2), $$
\item 
if 
$$\frac{e(F^*) - e(H)}{v(F^*) - v(H)} = m_k(F_1, F_2)$$
then $H$ consists of exactly an attachment edge and $F^*$ is generic.
\end{enumerate}
\end{definition}

The next theorem shows that the function $m_k(\cdot, \cdot)$ indeed determines the threshold for the asymmetric Ramsey property for all $k$-uniform hypergraphs which satisfy certain conditions. 

\begin{theorem} \label{thm:asymmetric-0}
Let $F_1, \ldots, F_r$ be $k$-uniform hypergraphs such that $F_2$ has at least three edges, $m_k(F_1) \geq m_k(F_2) \geq \dotsb \geq m_k(F_r)$ and the following holds:
\begin{enumerate}[(i)]
\item $F_1$ and $F_2$ are strictly balanced and $m_k(F_2) \ge 1$,
\item $F_1$ is strictly balanced with respect to $m_k(\cdot, F_2)$, 
\item $\mathcal{F}^*(F_1, F_2)$ is asymmetric-balanced,
\item for every $k$-uniform hypergraph $G$ such that $m(G) \leq m_k(F_1, F_2)$ we have
$$G \nrightarrow (F_1, F_2).$$
\end{enumerate}
Then there exists a constant $c > 0$ such that for $p \le cn^{-1/m_k(F_1, F_2)}$ we have
$$
\lim_{n \rightarrow \infty} \Pr \left[ \Hknp \rightarrow (F_1, \ldots, F_r) \right] = 0.
$$
\end{theorem}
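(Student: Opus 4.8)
The plan is to adapt the standard template for Ramsey $0$-statements in random (hyper)graphs --- R\"odl--Ruci\'nski in the symmetric case, Kohayakawa--Kreuter and Marciniszyn--Skokan--Steger--Thomas in the asymmetric one --- to the $k$-uniform setting, with the family $\mathcal{F}^*(F_1,F_2)$ playing the role of the ``forbidden dense configurations''. Write $m := m_k(F_1, F_2)$, $G := \Hknp$, and fix $p \le cn^{-1/m}$ for a small constant $c$ to be chosen. Two preliminary reductions. First, since $m_k(F_j) \le m_k(F_2)$ for all $j \ge 2$, any red/blue colouring of $E(G)$ with no red copy of $F_1$ and no blue copy of $F_2$ is in particular an $r$-colouring with no monochromatic copy of $F_i$ in colour $i$ (leaving colours $3, \dots, r$ empty); hence it suffices to prove $\Pr[G \to (F_1,F_2)] = o(1)$. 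Second, it therefore suffices to exhibit, with high probability, a partition $E(G) = R \cup B$ with $R$ free of copies of $F_1$ and $B$ free of copies of $F_2$.

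The colouring is built by a peeling procedure. Starting from $G$ and the empty colouring, repeatedly: if the currently uncoloured subgraph $G'$ contains an uncoloured edge lying in no copy of $F_1$ inside $G'$, colour it red; if it contains an uncoloured edge lying in no copy of $F_2$ inside $G'$, colour it blue; iterate until no such edge exists. Let $G^*$ be the resulting uncoloured subgraph --- the \emph{core} --- so that every edge of $G^*$ lies in a copy of $F_1$ and in a copy of $F_2$, both inside $G^*$. An induction on the peeling order shows the peeled colours are safe: a copy of $F_1$ in $G$ that ends up entirely red must in fact lie wholly inside $G^*$, since the first of its edges to be coloured would, at that moment, lie in an entirely uncoloured copy of $F_1$ and hence could not have been coloured red; symmetrically for entirely blue copies of $F_2$. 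Thus it is enough to colour $G^*$ so that no copy of $F_1$ inside $G^*$ becomes red and no copy of $F_2$ inside $G^*$ becomes blue --- i.e.\ to show $G^* \nrightarrow (F_1, F_2)$ --- and then to glue this colouring to the peeled colours.

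To conclude $G^* \nrightarrow (F_1,F_2)$ via hypothesis (iv) it suffices to show that with high probability $m(G^*) \le m$, and this is where the first moment and hypotheses (i)--(iii) enter. One first checks --- a direct computation from the strict balancedness of $F_1$, $F_2$ (i) and of $F_1$ with respect to $m_k(\cdot, F_2)$ (ii) --- that every \emph{generic} member $F^* \in \mathcal{F}^*(F_1, F_2)$ satisfies $d_k(F^*) = m$. Next one observes that $G^*$ is assembled entirely out of copies of members of $\mathcal{F}^*(F_1, F_2)$: every edge of $G^*$ lies in a copy $F_2^*$ of $F_2$ inside $G^*$, each of whose edges in turn lies in some copy of $F_1$ inside $G^*$, and declaring any single edge of $F_2^*$ an attachment edge exhibits a copy of a member of $\mathcal{F}^*$ through the given edge. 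Hypothesis (iii), asymmetric-balancedness, is precisely the assertion that such amalgamations --- whether one enlarges a configuration by attaching further copies of $F_1$ or by overlapping further copies of members of $\mathcal{F}^*$ --- never become locally denser than $m$, save for the unavoidable borderline pieces consisting of an attachment edge of a generic configuration. A union bound over the (boundedly many types of) ``excess'' amalgams on at most boundedly many vertices then shows that with high probability $G$, and in particular $G^*$, contains no subconfiguration of density exceeding $m$, which gives $m(G^*) \le m$; hypothesis (iv) supplies a valid red/blue colouring of $G^*$, and gluing it to the peeled colours produces the desired partition of $E(G)$, with high probability.

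The main obstacle is this structural step --- the bound $m(G^*) \le m$, i.e.\ the claim that copies of $F_1$, $F_2$ and of members of $\mathcal{F}^*(F_1,F_2)$ present in a sparse random hypergraph cannot overlap so as to push the density of their union past $m$. Conditions~1 and~2 in the definition of asymmetric-balanced are tailored exactly for this, with the strict-balancedness hypotheses handling proper sub-amalgams and the base case $e=1$, $v=k$; the delicate parts are (a) organizing the overlap patterns into boundedly many types so that a first-moment bound applies, (b) keeping track of which edges can serve as attachment edges in each type, and (c) disposing of the degenerate regimes --- for instance when $F_1$ itself contains a subgraph of edge-to-vertex ratio at least $m$, which makes copies of $F_1$ whp absent from $G$ or present only in bounded total number. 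By comparison, the reductions, the peeling argument, and the appeal to (iv) are routine.
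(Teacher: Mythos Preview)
Your reductions are fine: restricting to two colours and the peeling/extension argument are correct. Your peeling rule is actually a bit simpler than the paper's (which declares an edge open unless it lies in a copy of $F_1$ and a copy of $F_2$ that are \emph{otherwise edge-disjoint}), and your argument that the peeled colours never complete a forbidden monochromatic copy is valid. Your computation that a generic $F^*\in\mathcal F^*$ has $d_k(F^*)=m$ is also right.

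The genuine gap is in the structural step, and it is not just that the details are delicate --- your target is mis-formulated. You aim to show $m(G^*)\le m$ for the entire core and then apply~(iv) once. But $G^*$ typically has $\Theta(n)$ vertices, so a putative dense subgraph need not be of bounded size; there is no reason a first-moment bound over ``boundedly many types of excess amalgams on boundedly many vertices'' controls $m(G^*)$. In fact the paper never proves $m(\hat H)\le m$ for its core $\hat H$. Instead it introduces the notion of an \emph{$(F_1,F_2)$-core} (a subgraph closed under copies of $F_1$ and $F_2$), decomposes $\hat H$ into edge-disjoint \emph{minimal} such cores $\hat H_1,\dots,\hat H_t$, and proves --- this is the real content --- that with high probability every minimal core has size at most a constant $L=L(F_1,F_2)$. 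Only then does the first moment give $m(\hat H_i)\le m$ for each piece, (iv) colours each piece separately, and the colourings glue because the pieces are closed under $F_1$- and $F_2$-copies.

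The bounded-size claim is proved by a \emph{grow-sequence} argument: starting from a copy of $F_1$, one iteratively attaches either a copy of $F_1$ or a member of $\mathcal F^*$ along an open edge, distinguishing \emph{regular} steps (generic attachment meeting $G_{i-1}$ in exactly one edge, contributing expected-count factor $\le c$) from \emph{degenerate} steps (any other overlap, contributing a factor $\le Cn^{-\alpha}$ via (ii) and (iii)). The crux, which uses (i) as well, is a combinatorial lemma showing that every regular step creates $(e(F_2)-1)(e(F_1)-1)$ new open edges and that each subsequent step destroys only boundedly many fully-open configurations; a bookkeeping inequality then forces any long sequence to contain many degenerate steps, after which a union bound finishes. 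None of this mechanism appears in your sketch, and your weaker peeling rule (producing a larger $G^*\supseteq\hat H$) only makes a direct density argument harder. Hypotheses (i)--(iii) are used not to bound densities of static amalgams but to control the \emph{rate} at which open edges are created and destroyed along the grow sequence; that is the idea you are missing.
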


We prove Theorem~\ref{thm:asymmetric-0} in Section \ref{sec:asymmetric}. In Section~\ref{sec:specialcase} we use it to derive the $0$-statement for pairs of  hypergraphs that are used in the proof of Theorem~\ref{thm:main_beta}.

\subsection{Asymmetric meets symmetric Ramsey} \label{sec:proof_outline}

Before we dive into proofs, we elaborate on the connection between the asymmetric Ramsey properties and Theorem \ref{thm:main_beta}.
We construct the hypergraph $F$ in Theorem \ref{thm:main_beta} as a disjoint union of two carefully chosen hypergraphs $F_1$ and $F_2$ with $m_k(F_1) > m_k(F_2)$. Moreover, we will choose $F_1$ as a star-like hypergraph whose threshold is asymptotically below $n^{-1/m_k(F_1, F_2)}$ (in particular, it is determined by the appearance of a small subgraph). The choice of $\theta := m_k(F_1, F_2)$ now comes into play as follows. First observe that if $G \not \rightarrow (F_1,F_2)$ then also $G \not \rightarrow (F_1\cup F_2)$. The $0$-statement thus follows immediately from the corresponding statement for the asymmetric case. For the $1$-statement we proceed as follows. 
Consider some $2$-edge-colouring of $H \sim \Hknp$. We arbitrarily partition the vertex set of $H$ into three parts of size $n/3$ and only consider the three induced (coloured) hypergraphs $H_1$, $H_2$ and $H_3$. 
By Theorem \ref{thm:asymmetric-1}, if $p > Cn^{-1/\theta}$ we know that $H_1$ either contains a blue $F_1$ or a red $F_2$ (or both). 
Similarly, by reverting the colours, Theorem \ref{thm:asymmetric-1} also implies that $H_2$  contains a red $F_1$ or a blue $F_2$. 
If in this way we find a red $F_1$ and a red $F_2$  or a blue $F_1$ and a blue $F_2$, we are done,
so we just have to consider the remaining two cases.

\begin{itemize}
  \item There exists a blue $F_1$ and a red $F_1$:

  \noindent
 As $\theta = m_k(F_1, F_2) > m_k(F_2)$ it follows from Theorem \ref{thm:hypergraph-1-statement} that there exists a monochromatic copy of $F_2$ in $H_3$, which, regardless of its colour, gives a monochromatic copy of $F$.

  \item There exists a red $F_2$ and a blue $F_2$:

  \noindent
  This is the case where the special choice of $F_1$ comes into play: we will choose it as a hypergraph for which the (symmetric) threshold is much lower than $n^{-1/m_k(F_1)}$, in fact, lower than $n^{-1/\theta}$. In particular 
there exists  $F'$ such that $F'\rightarrow (F_1)$ and $\Hknp$ a.a.s (asymptotically almost surely, i.e. with probability which tends to $1$ as $n$ goes to infinity) contains $F'$ for $p=n^{-1/m_k(F_1, F_2)}$.
   Thus, we conclude that $H_3$ contains $F'$ and, in turn, a monochromatic copy of $F_1$ which again implies the existence of a monochromatic $F$.
\end{itemize}

Of course, we need to show that it is possible to choose $F_1$ and $F_2$ with the desired properties. We are able to do so for $k \ge 4$, thus the bound in Theorem \ref{thm:main_beta}. Another challenge is to show that for $p < cn^{-1/\theta}$ we can colour the edges of $\Hknp$ such that there is no  red copy of $F_1$ and no blue copy of $F_2$, which implies that there is no monochromatic $F$.

The rest of the paper is organised as follows. %\RN{This has to be checked once we are done, as the structure will probably change.} 
 In the next section we describe a $k$-uniform hypergraph $F = F_1 \cup F_2$ and, %prove certain technical lemmas. 
 assuming that the threshold for $\Hknp \rightarrow (F_1, F_2)$ is $n^{-1/m_k(F_1, F_2)}$, prove Theorem \ref{thm:main_beta}. In Section \ref{sec:asymm_proof} we prove Theorem \ref{thm:asymmetric-1} (the $1$-statement for asymmetric Ramsey properties). In the rest of the paper we prove the matching lower-bound on the threshold for $\Hknp \rightarrow (F_1, F_2)$. In Section \ref{sec:asymmetric} we prove Theorem \ref{thm:asymmetric-0} % reduce the problem to showing that a special family of graphs built from copies of $F_1$ and $F_2$ has certain density properties, 
 and then in Section \ref{sec:specialcase} we verify that it can be applied with $F_1$ and $F_2$. We close with some concluding remarks in Section~\ref{sec:conclusion}.

%%%% Proof of the main theorem (new type of thresholds)
%!TEX root = hypergraphs.tex
\section{Proof of Theorem \ref{thm:main_beta}}
\label{sec:main-proof}

Let \(G = (V, E)\) be a graph and \(W\) a set of \(k-2\) additional vertices
with \(V \cap W = \emptyset\). We denote by \(G^{+k} = (V', E')\) the
\(k\)-uniform hypergraph obtained by adding the vertices of \(W\) to each edge
of \(G\), i.e.\ we set \(V' = V \cup W\) and \(E' = \{ e \cup W \mid e \in
E\}\).  A \emph{tight} $k$-cycle $C_t$ is a $k$-uniform hypergraph with vertex set $\{v_0, \ldots, v_{t-1}\}$ and the edges $\{v_i, v_{i+1}, \ldots, v_{i + k - 1} \}$ for every $0 \le i \le t - 1$ (the index addition is modulo $t$).

The following theorem implies Theorem \ref{thm:main_beta}.

\begin{theorem} \label{thm:main_aux}
  For every $k \ge 4$ there exist positive constants $c, C$ such that
  $$
    \lim_{n\to\infty} \P[\Hknp \rightarrow (\HFi \cup \HFii)_r] =
      \begin{cases}
        0 & \text{if \(p \leq cn^{-1/\theta}\)}\\
        1 & \text{if \(p \geq Cn^{-1/\theta}\)},
      \end{cases}    
  $$
  where
  $$
    \theta := m_k(K_3^{+k}, \HFii)
  $$
  and
  $$
    t_4 = 8, \; t_5 = 14, \text{ and } \quad t_k = k^2 \; \text{ for } \; k \ge 6.
  $$
\end{theorem}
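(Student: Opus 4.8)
The plan is to reduce Theorem~\ref{thm:main_aux} to the asymmetric Ramsey threshold for the pair $(\HFi, \HFii)$, precisely as sketched in Section~\ref{sec:proof_outline}, and then to pin down the three numerical choices of $t_k$ by verifying the required density inequalities. Write $F_1 := \HFi = K_3^{+k}$ and $F_2 := \HFii = C_{t_k}$, and set $\theta := m_k(F_1, F_2)$. One first checks the basic density facts: $m_k(F_1) = d_k(F_1) = \frac{3-1}{k+1-k} = 2$ (since $K_3$ has $3$ edges on $3$ vertices, so $F_1$ has $3$ edges on $k+1$ vertices), and $F_1$ is strictly $k$-balanced; while $m_k(F_2) = d_k(C_{t_k}) = \frac{t_k - 1}{t_k - k}$ for the tight cycle, which is strictly $k$-balanced and satisfies $1 < m_k(F_2) < 2$ precisely when $t_k > k$. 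The point of the three cases $t_4 = 8$, $t_5 = 14$, $t_k = k^2$ for $k \ge 6$ is to simultaneously guarantee (a) $m_k(F_1) > m_k(F_2)$, hence $1 < \theta < m_k(F_1) = m_k(F)$ as required by Theorem~\ref{thm:main_beta}; (b) that $F_1$ is strictly balanced with respect to $m_k(\cdot, F_2)$ and $\cF^*(F_1, F_2)$ is asymmetric-balanced, so that Theorems~\ref{thm:asymmetric-1} and \ref{thm:asymmetric-0} both apply (the latter also needs $G \nrightarrow (F_1, F_2)$ whenever $m(G) \le \theta$, which we establish separately and which is the content of Section~\ref{sec:specialcase}); and crucially (c) that $K_3^{+k}$ is \emph{star-like} in the relevant sense, i.e.\ there is a hypergraph $F'$ with $F' \to (F_1)$ whose threshold of appearance in $\Hknp$ is below $n^{-1/\theta}$. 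Establishing (c) is where the dimension restriction $k \ge 4$ and the specific values of $t_k$ really bite.

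For the \textbf{1-statement} ($p \ge C n^{-1/\theta}$), I would argue exactly as in Section~\ref{sec:proof_outline}: partition $V(H)$ into three parts $V_1, V_2, V_3$ of size $n/3$, with induced sub-hypergraphs $H_1, H_2, H_3$, each distributed as $H^k(n/3, p)$. Applying Theorem~\ref{thm:asymmetric-1} to $H_1$ (with blue playing the role of colour $1$) gives a blue $F_1$ or a red $F_2$ in $H_1$; applying it to $H_2$ with the colours swapped gives a red $F_1$ or a blue $F_2$ in $H_2$. If we ever obtain a monochromatic $F_1$ and a monochromatic $F_2$ in the same colour we are done since $F = F_1 \cup F_2$ and $V_1, V_2$ are disjoint. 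The two surviving cases are: (i) a blue $F_1$ and a red $F_1$, in which case $m_k(F_2) < \theta$ lets us invoke Theorem~\ref{thm:hypergraph-1-statement} to find a monochromatic $F_2$ in $H_3$, which in either colour completes a monochromatic $F$; and (ii) a red $F_2$ and a blue $F_2$, in which case we use property (c): since $\Hknp$ a.a.s.\ contains a copy of $F'$ inside $H_3$ for $p \ge Cn^{-1/\theta}$ (as $F'$ has $1$-statement threshold of its \emph{appearance} below $n^{-1/\theta}$), and $F' \to (F_1)$, we get a monochromatic $F_1$ in $H_3$, again completing a monochromatic $F$. The \textbf{0-statement} ($p \le c n^{-1/\theta}$) is immediate from Theorem~\ref{thm:asymmetric-0} applied to the pair $(F_1, F_2)$: a colouring of $\Hknp$ avoiding a red $F_1$ and a blue $F_2$ in particular avoids a monochromatic copy of $F_1 \cup F_2 = F$; so it only remains to verify that hypotheses (i)--(iv) of Theorem~\ref{thm:asymmetric-0} hold for $(F_1, F_2)$, which is deferred to Sections~\ref{sec:asymmetric}--\ref{sec:specialcase}.

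The \textbf{main obstacle} is the combinatorial construction behind property (c), together with checking the asymmetric-balancedness condition (iii) of Theorem~\ref{thm:asymmetric-0} for the family $\cF^*(K_3^{+k}, C_{t_k})$. For (c) one needs a ``sunflower/sunshine''-type hypergraph $F'$ with $F' \to (K_3^{+k})$ in which all but $k-2$ vertices of every copy of $K_3^{+k}$ are distinct, so that the appearance threshold of $F'$ is governed by its own $d_k$, which must come out $< \theta = m_k(K_3^{+k}, C_{t_k})$; the value of $\theta$ depends monotonically on $m_k(C_{t_k}) = \frac{t_k - 1}{t_k - k}$, so choosing $t_k$ large enough pushes $m_k(C_{t_k})$ close to $1$ and $\theta$ close to $\frac{3}{k - k + 1/1} = \dotsb$ — in any case, large enough $t_k$ makes the inequality go through, and the displayed values $t_4 = 8, t_5 = 14, t_k = k^2$ are the concrete choices that make \emph{all} of (a)--(c) and conditions (i)--(iv) hold simultaneously; verifying that these finitely many inequalities are consistent, and that $k = 2, 3$ genuinely fail (no valid $F'$ exists, explaining the $k \ge 4$ restriction), is the delicate part. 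The asymmetric-balancedness check requires analysing, for every amalgamation $F^*$ of copies of $K_3^{+k}$ onto a tight cycle $C_{t_k}$ covering all but one edge, that deleting any subgraph $H$ containing an attachment edge removes edges at density at least $m_k(F_1, F_2)$, with equality only in the generic one-edge case; this is a finite but somewhat intricate case analysis exploiting strict $k$-balancedness of both $K_3^{+k}$ and $C_{t_k}$ and the gap $m_k(F_1) > m_k(F_2)$.
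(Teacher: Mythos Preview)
Your overall strategy is exactly the paper's: derive the $0$-statement from the asymmetric $0$-statement (Lemma~\ref{lemma:0_statement_aux}, which in turn is Theorem~\ref{thm:asymmetric-0} specialised to $(\HFi,\HFii)$), and for the $1$-statement split $V(H)$ into three parts, apply Theorem~\ref{thm:asymmetric-1} twice with swapped colours, and handle the two residual cases using Theorem~\ref{thm:hypergraph-1-statement} and an auxiliary hypergraph $F'$ with $F'\to(\HFi)$ whose appearance threshold lies below $n^{-1/\theta}$.

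The one place your proposal is vague, and where your description drifts from what actually works, is the choice of $F'$. You describe a ``sunflower/sunshine''-type construction in which copies of $K_3^{+k}$ share only the $k-2$ extension vertices; but such a sunflower does \emph{not} satisfy $F'\to(K_3^{+k})$, since the underlying triangles are pairwise vertex-disjoint and can be $2$-coloured without a monochromatic one. The paper's choice is much simpler and direct: take $F' = K_6^{+k}$. Because $K_6\to(K_3)$ by Ramsey's theorem, adding $k-2$ common vertices to every edge gives $K_6^{+k}\to(K_3^{+k})$. The appearance threshold is governed by $m(K_6^{+k}) = 15/(k+4)$, and the single inequality one must check is $15/(k+4) < \theta = \tfrac{3t_k-3}{2t_k-k-1}$; this is precisely what fails for $k\le 3$ and holds for $k\ge 4$ with the stated $t_k$. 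With this $F'$ in hand, the entire proof of the $1$-statement collapses to that one line, and the ``delicate part'' you allude to for property~(c) disappears. The genuinely involved verifications are confined to the $0$-statement side (conditions (iii) and (iv) of Theorem~\ref{thm:asymmetric-0}), which are handled in Section~\ref{sec:specialcase}.
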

%\NS{It seems to me $t_k = 2k$ works for all $k \ge 4$.}
% \RN{Double check that the definition ot $t_k$ is correct.}
% \NS{I check it. It's fine.}
From the definition of the $k$-density $m_k$ we have
$$m_k(\HFii) = \frac{t_k-1}{t_k - k} \text{ \ \ and \ \ } m_k(K^{+k}_3) = 2$$
and thus $m_k(\HFii) < m_k(K_3^{+k})$ for our choice of $t_k$. In addition, one easily checks that $K_3^{+k}$ is strictly balanced with respect to $m_k(\cdot,\HFii)$. 
Thus,
\[
\theta = m_k(K^{+k}_3, \HFii)=\frac{3t_k-3}{2t_k-k-1} < m_k(\HFii\cup K^{+k}_3)=2.
\]

Observe that the threshold $\theta$ from Theorem~\ref{thm:main_aux} 
does not fall into any category (i)--(iii) from Theorem \ref{thm:graph-RR}. Indeed, the fact that
$\theta < m_k(K_3^{+k} \cup \HFii)$ excludes (ii) and (iii). We further exclude the possibility that there exists a small hypergraph whose appearance determines the threshold. To see this let \(m(F)\) denote the usual density measure
\begin{equation*}
  m(F) := \max_{\substack{H\subseteq F\\v(H) \geq 1}} \frac{e(H)}{v(H)}.  
\end{equation*}
and note that Bollob\'as' small subgraphs theorem
\cite{bollobas1981threshold} (for graphs) extends straightforwardly to hypergraphs. That is,
for any hypergraph \(F\) we have
\begin{equation}
  \label{eq:small-subgraphs-hypergraph}
  \lim_{n\to\infty} \P[\text{\(\Hknp\) contains a copy of \(F\)}] =
  \begin{cases}
    0 & \text{if \(p \ll n^{-1/m(F)}\)}\\
    1 & \text{if \(p \gg n^{-1/m(F)}\)}.
  \end{cases}
\end{equation}
Therefore, if the threshold in Theorem \ref{thm:main_aux} is determined by the appearance of a small subgraph, as in the $0$-statement of (i) in Theorem~\ref{thm:graph-RR},  then we would necessarily have $p \ll n^{-1/\theta}$ in order for the $0$-statement to hold, which yields a longer interval for the phase transition, contrary to what we proved.

The proof of Theorem \ref{thm:main_aux} relies on Theorem \ref{thm:asymmetric-1} (which is proven in the next section) and the following lower bound on the threshold for $\Hknp \rightarrow (K_3^{+k}, \HFii)$ which we prove in Section~\ref{sec:asymmetric}.

\begin{lemma} \label{lemma:0_statement_aux}
  For every $k \ge 4$ there exists $c > 0$ such that if $p < cn^{-1/m_k(K_3^{+k}, \HFii)}$ then
  $$
    \lim_{n\to\infty} \P[\Hknp \rightarrow (K_3^{+k}, \HFii)] = o(1).
  $$
\end{lemma}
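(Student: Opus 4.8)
The plan is to derive Lemma~\ref{lemma:0_statement_aux} as the instance of Theorem~\ref{thm:asymmetric-0} with \(r=2\), \(F_1=\HFi\) and \(F_2=\HFii\). Note that \(\HFii\) has \(t_k\ge 8\ge 3\) edges, that \(m_k(\HFi)=2>(t_k-1)/(t_k-k)=m_k(\HFii)\), and (as already recorded) that \(\theta=m_k(\HFi,\HFii)=(3t_k-3)/(2t_k-k-1)\) lies strictly between \(1\) and \(2\); so everything reduces to checking hypotheses (i)--(iv) of Theorem~\ref{thm:asymmetric-0} for this pair, which is what Section~\ref{sec:specialcase} does. Conditions (i) and (ii) are short computations. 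For (i): the proper subgraphs of \(\HFi\) are a single edge, of \(k\)-density \(1/k\), and a pair of edges, which span \(k+1\) vertices and therefore have \(k\)-density \(1\); both lie below \(m_k(\HFi)=2\), so \(\HFi\) is strictly \(k\)-balanced. For \(\HFii\), a proper subgraph omitting a vertex of the cycle must omit a run of at least \(k\) consecutive edges, and a short counting argument then gives \(e(H)\le v(H)-k+1\), hence \(d_k(H)\le 1<m_k(\HFii)\); a proper subgraph that spans all \(t_k\) vertices has at most \(t_k-1\) edges and thus smaller \(k\)-density directly --- so \(\HFii\) is strictly \(k\)-balanced, and clearly \(m_k(\HFii)\ge 1\). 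For (ii), evaluating \eqref{eq:def-of-mk-asymm} on the three subgraph types of \(\HFi\) yields \(3/(1+1/m_k(\HFii))\), \(2/(1+1/m_k(\HFii))\) and \(m_k(\HFii)\); since the listed values satisfy \(t_k>2k-1\) we have \(m_k(\HFii)<2\), which makes the whole \(\HFi\) the strict maximiser, i.e.\ \(\HFi\) is strictly balanced with respect to \(m_k(\cdot,\HFii)\).

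The substantive step is condition (iii): that \(\cF^*(\HFi,\HFii)\) is asymmetric-balanced. A member \(F^*\in\cF^*(\HFi,\HFii)\) is built from a copy of the tight cycle \(\HFii\) by fixing an attachment edge \(e_0\) and gluing a copy of \(\HFi\) onto each of the other \(t_k-1\) edges; in the generic case each glued copy adds exactly one vertex (a ``tip'') and two edges, so \(v(F^*)=2t_k-1\) and \(e(F^*)=3t_k-2\), and deleting the attachment edge (\(v(H)=k\), \(e(H)=1\)) gives the ratio \((3t_k-3)/(2t_k-k-1)=\theta\) --- precisely the extremal case allowed in the definition. What remains is to show that every admissible \(H\) (a subgraph containing an attachment edge, with \(V(H)\subsetneq V(F^*)\)) gives ratio \(\ge\theta\), with equality only for \(H=e_0\) in a generic \(F^*\). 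Setting \(T:=V(F^*)\setminus V(H)\), the requirement that \(H\) contains an attachment edge forces \(T\) to be disjoint from the \(k\) vertices of that edge, and \((e(F^*)-e(H))/(v(F^*)-v(H))\) is at least the number of edges of \(F^*\) meeting \(T\) divided by \(\abs{T}\); one then checks that a tip in \(T\) contributes \(2>\theta\) edges per vertex, that a run of cycle vertices in \(T\) contributes roughly three edges per vertex (the incident cycle edges together with their glued copies), and that no combination drives the per-vertex count to \(\theta\) or below unless \(T\) is exactly the complement of an attachment edge and the glued copies are pairwise disjoint. For non-generic \(F^*\) the glued copies overlap, which only shrinks \(v(F^*)\) relative to \(e(F^*)\), so the ratios there are strictly larger and the equality case does not recur. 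This bookkeeping --- in particular verifying that the only tight chunk is a single attachment edge --- is where the concrete choices \(t_4=8\), \(t_5=14\) and \(t_k=k^2\) for \(k\ge 6\) are used, and I expect it to be the main obstacle.

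Condition (iv) asks that \(G\nrightarrow(\HFi,\HFii)\) for every \(k\)-uniform hypergraph \(G\) with \(m(G)\le\theta\). This is a purely deterministic colouring statement; the point is that \(\theta<2=m_k(\HFi)\) makes \(G\) sparse, namely \(e(H)<2v(H)\) for all \(H\subseteq G\), and one exploits this (as in the graph case treated by Kohayakawa--Kreuter and by Marciniszyn, Skokan, Sp\"ohel and Steger) by colouring the edges greedily along a suitable ordering: a red \(\HFi\) requires three pairwise \((k-1)\)-overlapping red edges sharing a common \((k-2)\)-set, while a blue \(\HFii\) requires an entire long tight cycle of blue edges, and neither can be forced once the density falls below \(\theta\). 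With (i)--(iv) established, Theorem~\ref{thm:asymmetric-0} yields a constant \(c>0\) such that \(\P[\Hknp\rightarrow(\HFi,\HFii)]=o(1)\) whenever \(p<cn^{-1/\theta}=cn^{-1/m_k(\HFi,\HFii)}\), which is exactly Lemma~\ref{lemma:0_statement_aux}.
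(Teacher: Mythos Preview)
Your overall plan --- derive the lemma as an instance of Theorem~\ref{thm:asymmetric-0} with $F_1=\HFi$, $F_2=\HFii$ and verify hypotheses (i)--(iv) --- is exactly the paper's approach, and your treatment of (i) and (ii) matches what the paper does (it simply declares them ``easy to check''). The problems are in your sketches for (iii) and (iv).

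For (iii), the asymmetric-balancedness of $\cF^*(\HFi,\HFii)$, your intuition (tips contribute $2$ edges per vertex, cycle vertices roughly~$3$) points in the right direction, but the sentence ``for non-generic $F^*$ the glued copies overlap, which only shrinks $v(F^*)$ relative to $e(F^*)$, so the ratios there are strictly larger'' is not justified: overlapping copies can kill edges as well as vertices, so the ratio need not move the way you claim. The paper's argument (Lemma~\ref{lemma:prop_c}) does not separate generic from non-generic $F^*$ at all; it instead performs a uniform edge-count through a chain of structural claims (Claims~\ref{claim:outside_vertices}--\ref{claim:cycle_vertices}) that track, for a given deleted vertex set, how many $C^*$-edges meet it, distinguishing cycle vertices from ``outside'' tips. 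The only constraint on $t_k$ used there is $t_k\ge 2k$; the specific numerical choices $t_4=8$, $t_5=14$, $t_k=k^2$ are \emph{not} what drives (iii), contrary to what you write.

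The real gap is in (iv). A greedy edge-ordering in the style of the graph-case arguments you cite does not obviously work here, and the paper does something quite different and harder. It passes to a vertex-minimal counterexample $H$, finds a vertex $x$ of degree at most $\lfloor k\cdot m_k(\HFi,\HFii)\rfloor$, colours $H\setminus x$ by minimality, and then extends across the edges through $x$ by analysing the $(k-1)$-uniform link hypergraph $H_x$. The crux is a separate combinatorial lemma (Lemma~\ref{lemma:rdo_lge5} for $k\ge 5$, and the ad hoc Lemma~\ref{lemma:rdo_l4} for $k=4$, proved in the appendix) guaranteeing a $(k-3)$-intersecting edge set $S\subseteq E(H_x)$ whose removal destroys all copies of $T^{k-1}_{2(k-1)}$; one colours the corresponding edges of $H$ red and the rest blue. \emph{This} is where the precise values of $t_k$ enter: they are chosen so that $\lfloor k\cdot m_k(\HFi,\HFii)\rfloor$ falls below the edge-count hypotheses of those lemmas. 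Your sketch neither identifies this mechanism nor supplies any argument that a greedy colouring succeeds, so as written it does not prove (iv).
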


The choice of $t_k$ in Theorem~\ref{thm:main_aux} is based on a number of inequalities that have to be satisfied. Some of them come from the proof of Lemma \ref{lemma:0_statement_aux}, some others will become clear soon. In particular, the reason why we need $k\ge 4$  is that for small values of $k$ the calculations behave differently. 
%and we cannot always take $t_k = 2k$
%(the \emph{law of small numbers}). %\NS{Check if we really can not take $2k$ always}

\begin{proof}[Proof of Theorem \ref{thm:main_aux}]
Note that if $\Hknp \not \rightarrow (K^{+k}_3, \HFii)$ then also $\Hknp \not \rightarrow (K^{+k}_3\cup \HFii)$. Therefore, Lemma~\ref{lemma:0_statement_aux} immediately gives the $0$-statement.

For the $1$-statement we proceed as explained in Section~\ref{sec:proof_outline}. 
Recall from there that the only fact that we have to check is that 
$$
\lim_{n\to\infty} \P[\Hknp \rightarrow (K^{+k}_3)] = 1\qquad \text{for \(p \geq Cn^{-1/\theta}\)}.
$$
Now we use that the density of $K^{+k}_6$ is $m(K^{+k}_6) = 15/(k+4) < \theta$ (this inequality is the main reason why we require $k\ge 4$). As $K_6\rightarrow (K_3)$ and thus
$K^{+k}_6\rightarrow (K^{+k}_3)$ we know that $H_3$ contains a monochromatic copy of $K^{+k}_3$, which concludes the proof of the theorem.
\end{proof}

%%%% Asymmetric version - 1-statement
%!TEX root = hypergraphs.tex
\section{Asymmetric Ramsey properties: 1-statement} \label{sec:asymm_proof}

We generalize an approach of Nenadov and Steger \cite{nenadov2016short} based on the hypergraph containers. For further applications of this method in the context of Ramsey-type problems we refer the reader to \cite{conlon2016note,nenadov2015threshold,rodl2015exponential}. The proof relies on three ingredients: Ramsey's theorem, Janson's inequality and hypergraph containers. We now state each of them.

The following theorem is a well known quantitative strengthening of Ramsey's theorem. We include the proof for convenience of the reader.

\begin{theorem}[folklore] \label{thm:ramsey_count}
Let $F_1, \ldots, F_r$ be $k$-uniform hypergraphs and $r \in \NN$ be a constant. Then there exist constants $\alpha > 0$ and $n_0 \in \NN$ such that for any $n \ge n_0$ and any $r$-edge-colouring of $K_n^{(k)}$ (the complete $k$-uniform hypergraph on $n$ vertices) there are at least $\alpha n^{v(F_i)}$ copies of $F_i$ in the colour $i$, for some $1 \le i \le r$.
\end{theorem}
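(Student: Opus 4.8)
The plan is to deduce this quantitative version from the qualitative Ramsey theorem via a standard supersaturation (averaging) argument. First I would invoke the ordinary hypergraph Ramsey theorem: since $F_1, \ldots, F_r$ are fixed $k$-uniform hypergraphs, there is an integer $N = N(F_1, \ldots, F_r)$ such that in any $r$-edge-colouring of $K_N^{(k)}$ there is a monochromatic copy of $F_i$ in colour $i$ for some $i \in \{1, \ldots, r\}$. (This is itself a consequence of the usual hypergraph Ramsey theorem applied with $\ell = \max_i v(F_i)$, since a monochromatic $K_\ell^{(k)}$ contains a monochromatic copy of every $F_i$; one should be slightly careful that the colour index matches, but taking $N$ large enough so that a monochromatic clique in \emph{any} colour contains $F_i$ for \emph{that same} colour is immediate once $\ell \ge \max_i v(F_i)$.)

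Next, fix $n \ge n_0 := N$ and an arbitrary $r$-edge-colouring $\chi$ of $K_n^{(k)}$. I would count pairs $(S, \cdot)$ where $S$ ranges over the $\binom{n}{N}$ subsets of $V(K_n^{(k)})$ of size $N$: the induced colouring on each such $S$ (a copy of $K_N^{(k)}$) contains, by the choice of $N$, at least one monochromatic copy of some $F_i$ in colour $i$. Hence summing over all $S$, the number of triples (set $S$ of size $N$, index $i$, monochromatic copy of $F_i$ in colour $i$ inside $S$) is at least $\binom{n}{N}$. By averaging over the $r$ colours, there is a fixed colour $i$ for which at least $\binom{n}{N}/r$ of these triples use colour $i$. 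On the other hand, each monochromatic copy of $F_i$ in colour $i$ in $K_n^{(k)}$ is counted in at most $\binom{n - v(F_i)}{N - v(F_i)}$ such sets $S$. Therefore the number of monochromatic copies of $F_i$ in colour $i$ is at least
$$
\frac{\binom{n}{N}}{r \binom{n - v(F_i)}{N - v(F_i)}} = \frac{1}{r} \cdot \frac{n! \, (N - v(F_i))!}{N! \, (n - v(F_i))!} = \frac{1}{r \binom{N}{v(F_i)}} \cdot \binom{n}{v(F_i)} \ge \alpha \, n^{v(F_i)}
$$
for a suitable constant $\alpha = \alpha(F_1, \ldots, F_r, r) > 0$, using $\binom{n}{v(F_i)} \ge (n/v(F_i))^{v(F_i)} \ge c\, n^{v(F_i)}$. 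Taking $\alpha$ to be the minimum over $i \in \{1, \ldots, r\}$ of these constants (and shrinking if necessary) finishes the argument; note $\alpha$ is legitimately independent of $n$.

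There is no real obstacle here — the only point requiring a little care is the bookkeeping in the first step, matching the colour of the monochromatic clique produced by Ramsey's theorem with the index of the hypergraph $F_i$ we then find inside it; this is handled cleanly by choosing $\ell \ge \max_i v(F_i)$ so that a monochromatic $K_\ell^{(k)}$ in colour $i$ automatically contains a monochromatic copy of $F_i$ in colour $i$. Everything else is the routine double-counting that converts a single guaranteed copy into $\Theta(n^{v(F_i)})$ copies, and the constants can all be made explicit in terms of $N$ and the $v(F_i)$.
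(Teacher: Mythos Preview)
Your proposal is correct and follows essentially the same argument as the paper: invoke the qualitative Ramsey number $N$, average over all $N$-subsets, pigeonhole on the colour index, and divide by the overcount $\binom{n-v(F_i)}{N-v(F_i)}$ to get $\alpha n^{v(F_i)}$ copies. The only differences are cosmetic (you simplify to $\binom{n}{v(F_i)}/(r\binom{N}{v(F_i)})$ whereas the paper uses the cruder bound $n^{v(F_i)}/(rN^N)$), and your extra remark about matching the colour index via a large monochromatic clique is unnecessary since the asymmetric Ramsey theorem for the tuple $(F_1,\ldots,F_r)$ already delivers $F_i$ in colour $i$ directly.
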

\begin{proof}
From Ramsey's theorem we know that there exists $N = N(F_1, \ldots, F_r) \in \NN$ such that every
$r$-edge-colouring of the edges of $K_N^{(k)}$ contains a copy of $F_i$ in colour $i$, for some $i \in [r]$. Therefore, in any $r$-edge-colouring of $K_n^{(k)}$ with $n \ge N$, every $N$-subset of the vertices contains at least one monochromatic copy of some $F_i$ in colour $i$. In particular, there exists $i \in [r]$ such that in at least $\binom{n}{N}/r$ $N$-subsets of $K_n^{(k)}$ we find a copy of $F_i$ in colour $i$. On the other hand, every copy of $F_i$ is contained in at most $\binom{n - v(F_i)}{N - v(F_i)}$ $N$-subsets thus the number of different monochromatic copies of $F_i$ is at least
$$
  \binom{n}{N} \left(r \binom{n-v(F_i)}{N-v(F_i)} \right)^{-1} \ge \frac{\left(n / N\right)^N}{r n^{N - v(F_i)}} \ge \frac{n^{v(F_i)}}{r N^N}.
$$
The theorem now follows for $\alpha = (r N^{N})^{-1}$.
\end{proof}

Next, we derive a bound on the expected number of copies of certain hypergraphs in $\Hknp$.
\begin{lemma}
\label{lemma:funionf}
Let $F_1$ and $F_2$ be $k$-uniform hypergraphs such that $m_k(F_1)\ge m_k(F_2)>0$ and  $F_1$ is strictly balanced with respect to $m_k(\cdot, F_2)$. 
Let $\mathcal F$ be a family of subgraphs of $K_n^{(k)}$  such that  each member $F \in \mathcal F$ is a union of two distinct $F_1$-copies intersecting on at least one edge.
For a positive constant $C$, $p = C n^{-1/m_k(F_1, F_2)}$ and  $H \in \Hknp$, let $X$ be the number of hypergraphs $F \in \mathcal F$ contained in $H$. Then there exist a constant $\delta > 0$ such that
$$
E[X]  \le C' n^{k - 1/m_k(F_2) - \delta/2}.
$$
\end{lemma}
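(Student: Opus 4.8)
I want to bound $\E[X]$ where $X$ counts copies of members $F\in\mathcal F$ in $H\sim\Hknp$. Since $\mathcal F$ is a family of subgraphs of $K_n^{(k)}$ and each $F\in\mathcal F$ is a union of two distinct copies of $F_1$ sharing at least one edge, I would first classify the members of $\mathcal F$ by isomorphism type. For a fixed isomorphism type $J$ of such a union, the number of copies of $J$ in $K_n^{(k)}$ is $O(n^{v(J)})$, so by linearity of expectation the contribution of type $J$ to $\E[X]$ is $O(n^{v(J)}p^{e(J)})$. As there are only boundedly many isomorphism types (each $F$ has at most $2v(F_1)$ vertices and at most $2e(F_1)$ edges), it suffices to show $n^{v(J)}p^{e(J)}\le C' n^{k-1/m_k(F_2)-\delta/2}$ for every such $J$, with a uniform $\delta>0$.

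**Key step: the exponent estimate.** Fix a type $J=F_1'\cup F_1''$ where $F_1',F_1''\cong F_1$ overlap on a subgraph $G:=F_1'\cap F_1''$ with $e(G)\ge 1$. Write $v=v(J)$, $e=e(J)$; then $v=2v(F_1)-v(G)$ and $e=2e(F_1)-e(G)$. With $p=Cn^{-1/m_k(F_1,F_2)}$ we need $v-e/m_k(F_1,F_2)\le k-1/m_k(F_2)-\delta/2$. Using the definition $m_k(F_1,F_2)=e(F_1)\big/(v(F_1)-k+1/m_k(F_2))$ (note $F_1$ itself maximizes \eqref{eq:def-of-mk-asymm}), one computes $v(F_1)-e(F_1)/m_k(F_1,F_2)=k-1/m_k(F_2)$, so
$$
v-\frac{e}{m_k(F_1,F_2)}=2\Bigl(k-\tfrac{1}{m_k(F_2)}\Bigr)-\Bigl(v(G)-\tfrac{e(G)}{m_k(F_1,F_2)}\Bigr).
$$
Thus it remains to show $v(G)-e(G)/m_k(F_1,F_2)\le k-1/m_k(F_2)-\delta$ for some fixed $\delta>0$, i.e. that the overlap $G$, which is a subgraph of $F_1$ with at least one edge, contributes a strictly larger "per-copy" saving than the full $F_1$ does. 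This is exactly where strict balancedness of $F_1$ with respect to $m_k(\cdot,F_2)$ enters: for every proper subgraph $G\subsetneq F_1$ with $e(G)\ge 1$ we have $e(G)/(v(G)-k+1/m_k(F_2))<m_k(F_1,F_2)$, which rearranges to $v(G)-e(G)/m_k(F_1,F_2)<k-1/m_k(F_2)$; and when $G=F_1$ (the two copies coincide as subgraphs but are "distinct" only if they differ — so actually $G$ is always proper, or if $G=F_1$ then $F_1'=F_1''=J$ and $e=e(F_1)$, giving exponent exactly $k-1/m_k(F_2)$, which I'd need to handle by noting two distinct copies cannot have $F_1'=F_1''$ as vertex sets unless $v(J)=v(F_1)$, a case giving no gain but also no $F$ by distinctness — I should double-check the intended reading of "distinct"). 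Taking $\delta$ to be the minimum over the finitely many proper subgraphs $G$ of the gap $k-1/m_k(F_2)-(v(G)-e(G)/m_k(F_1,F_2))>0$ yields a uniform positive $\delta$.

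**Assembling.** Summing over the $O(1)$ isomorphism types and absorbing the constant $C^{e(J)}$ into $C'$, I get $\E[X]=O(n^{k-1/m_k(F_2)-\delta/2})$ as claimed (the $\delta/2$ rather than $\delta$ leaves room for polylog or constant slack, though here $\delta$ itself would suffice).

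**Main obstacle.** The genuine content is the inequality $v(G)-e(G)/m_k(F_1,F_2)<k-1/m_k(F_2)$ and getting it uniformly; everything else is bookkeeping. The subtlety is the boundary case where the two $F_1$-copies in $F$ share all of $F_1$ — I expect the paper either rules this out via the word "distinct" (distinct as labelled copies in $K_n^{(k)}$, so sharing fewer than $v(F_1)$ vertices and hence $G\subsetneq F_1$ properly as a graph) or absorbs it, and I would mirror whichever convention makes the strict-balancedness hypothesis bite on every type that actually occurs. A secondary point to verify carefully is that $F_1$ attains the maximum in \eqref{eq:def-of-mk-asymm} so that the identity $v(F_1)-e(F_1)/m_k(F_1,F_2)=k-1/m_k(F_2)$ holds with equality — this follows since strict balancedness of $F_1$ w.r.t. $m_k(\cdot,F_2)$ means no proper subgraph attains the max, hence $F_1$ does.
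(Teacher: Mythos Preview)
Your approach is essentially the same as the paper's: classify by the isomorphism type of the overlap $G=F_1'\cap F_1''$, use strict balancedness of $F_1$ with respect to $m_k(\cdot,F_2)$ to get a uniform gap for every proper subgraph $G$ with $e(G)\ge 1$, and sum over the $O(1)$ types. The paper writes the same computation in the form $n^{2v(F_1)-v(S)}p^{2e(F_1)-e(S)}=O(n^{2(k-1/m_k(F_2))})/(n^{v(S)}p^{e(S)})$ and then bounds the denominator.

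There is, however, a sign slip you should fix. From your displayed identity
\[
v-\frac{e}{m_k(F_1,F_2)}=2\Bigl(k-\tfrac{1}{m_k(F_2)}\Bigr)-\Bigl(v(G)-\tfrac{e(G)}{m_k(F_1,F_2)}\Bigr),
\]
the condition $v-e/m_k(F_1,F_2)\le k-1/m_k(F_2)-\delta/2$ is equivalent to
\[
v(G)-\frac{e(G)}{m_k(F_1,F_2)}\;\ge\; k-\frac{1}{m_k(F_2)}+\frac{\delta}{2},
\]
not the reverse inequality you wrote. Correspondingly, the rearrangement of $e(G)/(v(G)-k+1/m_k(F_2))<m_k(F_1,F_2)$ gives $v(G)-e(G)/m_k(F_1,F_2)>k-1/m_k(F_2)$, again with the inequality flipped relative to what you wrote. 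Both flips are consistent with one another, so once corrected your argument goes through verbatim and matches the paper; your observation that $G$ is necessarily a \emph{proper} subgraph of $F_1$ (since the two copies are distinct in $K_n^{(k)}$) is exactly what the paper uses to invoke strict balancedness.
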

\begin{proof}
Let $F \in \mathcal F$ be an arbitrary member of $\cF$. Let  $F', F'' \subset K_n^{(k)}$ be two $F_1$-copies 
such that $F = F' \cup F''$ and $e(F' \cap F'') \ge 1$. Set $S := F' \cap F''$.
From the assumption that $F_1$ is strictly balanced with respect to $m_k(\cdot, F_2)$ and $S$ is isomorphic to a proper subgraph of $F_1$ we have 
\begin{equation} \label{eq:delta_S}
  \frac{e(S)}{v(S) - k + 1/m_k(F_2)} < m_k(F_1, F_2) - \delta_S
\end{equation}
for some $\delta_S > 0$. 
Since there are only constantly many subgraphs $S \subseteq F_1$, there exists a constant $\delta' > 0$ such that the previous inequality holds with $\delta_S = \delta'$ for every $S \subseteq F_1$ with $e(S) \ge 1$. Using the assumption $p = Cn^{-1/m_k(F_1, F_2)}$, a straightforward calculation shows that the expected number of subgraphs of $H$ isomorphic to $F' \cup F''$ is at most of order
\[
  n^{v(F' \cup F'')} p^{e(F' \cup F'')} = 
  n^{2v(F_1) - v(S)} p^{2e(F_1) - e(S)} = 
  \frac{O(n^{2(k - 1/m_k(F_2))})}{n^{v(S)}p^{e(S)}} 
   \stackrel{\eqref{eq:delta_S}}{=} O(n^{k - 1/m_k(F_2) - \delta/2}),
\]
where $\delta>0$ depends only on $\delta'$, $F_1$ and $F_2$. Since there are only constantly many ways to obtain a graph as a union of two copies of $F_1$, this concludes the proof.
\end{proof}

We also need the following statement on the probability of existence of certain hypergraphs in the random hypergraph $\Hknp$. As the proof follows almost directly from Janson's inequality, we give just a sketch of the argument.

%\RN{Nemanja suggested to state Janson's inequality and then give the next lemma as a corollary.}\AS{cf email ...}\NS{I added some clarification.}

\begin{lemma} \label{lemma:janson}
Let $F_1$ and $F_2$ be $k$-uniform hypergraphs such that $m_k(F_1)\ge m_k(F_2)>0$ and $F_1$ is strictly balanced with respect to $m_k(\cdot, F_2)$, and let $\varepsilon > 0$ be a constant. Then there exists a constant $\beta > 0$ such that the following holds for any constant $C > 0$: for $p = Cn^{-1/m_k(F_1, F_2)}$ and any family $\cF$ of subgraphs of $K_n^{(k)}$ isomorphic to $F_1$ with $|\cF| \ge \varepsilon n^{v(F_1)}$ we have %for $n$ sufficiently large
$$
  \Pr \left[F' \nsubseteq \Hknp \text{ for all } F' \in \cF  \right] \le e^{-\beta C n^{k - 1/m_k(F_2)}}.
$$
\end{lemma}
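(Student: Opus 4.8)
The plan is to apply Janson's inequality to the family of events $\{A_{F'} : F' \in \cF\}$, where $A_{F'}$ is the event that $F' \subseteq \Hknp$. Writing $X = \sum_{F' \in \cF} \ind{A_{F'}}$, Janson's inequality gives
$$
  \Pr[X = 0] \le e^{-\mu + \bar\Delta/2}, \qquad \text{where } \mu = \E[X] \text{ and } \bar\Delta = \sum_{\substack{F', F'' \in \cF \\ e(F' \cap F'') \ge 1}} \Pr[A_{F'} \cap A_{F''}].
$$
(When $\bar\Delta \ge \mu$ one instead uses the bound $e^{-\mu^2 / (2\bar\Delta)}$, but we will see that the first form suffices.) So the task reduces to two estimates: a lower bound on $\mu$ and an upper bound on $\bar\Delta$.

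For the first moment, since each $F' \in \cF$ is a copy of $F_1$ we have $\Pr[A_{F'}] = p^{e(F_1)}$, so $\mu = |\cF| \, p^{e(F_1)} \ge \varepsilon n^{v(F_1)} p^{e(F_1)}$. Plugging in $p = Cn^{-1/m_k(F_1, F_2)}$ and using the identity $v(F_1) - e(F_1)/m_k(F_1, F_2) = k - 1/m_k(F_2)$ (which is exactly Definition~\ref{def:mk-asymm} applied to $F_1' = F_1$, valid because $F_1$ is strictly balanced with respect to $m_k(\cdot, F_2)$ so that $F_1$ itself attains the maximum), we get $\mu \ge \varepsilon C^{e(F_1)} n^{k - 1/m_k(F_2)}$.

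For the second moment term $\bar\Delta$, the diagonal-type contribution where $F' = F''$ is just $\mu$, which is of the right order. The off-diagonal contribution, where $F' \ne F''$ but $e(F' \cap F'') \ge 1$, is precisely the quantity controlled by Lemma~\ref{lemma:funionf}: the sum of $\Pr[A_{F'} \cap A_{F''}] = p^{e(F' \cup F'')}$ over all such pairs is, up to the constant factor from the number of intersection patterns, at most $\E[X']$ where $X'$ counts the unions of two distinct intersecting $F_1$-copies. Lemma~\ref{lemma:funionf} bounds this by $C' n^{k - 1/m_k(F_2) - \delta/2} = o(n^{k - 1/m_k(F_2)})$. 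Hence $\bar\Delta \le \mu + o(n^{k - 1/m_k(F_2)}) \le 2\mu$ for $n$ large (and in any case $\bar\Delta/2 - \mu \le -\mu/2 + o(n^{k-1/m_k(F_2)})$), so Janson gives $\Pr[X = 0] \le e^{-\mu/3} \le e^{-\beta C n^{k - 1/m_k(F_2)}}$ for a suitable constant $\beta = \beta(\varepsilon, F_1, F_2) > 0$ once $C$ is absorbed appropriately (note $\mu \gtrsim C^{e(F_1)} n^{k-1/m_k(F_2)}$, which is at least $\beta C n^{k-1/m_k(F_2)}$ for $C$ large, and one checks the small-$C$ regime separately or simply states the bound with the implied constant).

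The only genuine subtlety — and the step I would be most careful about — is bookkeeping the constant $C$: the first moment scales like $C^{e(F_1)}$ while the stated conclusion has a linear $Cn^{k - 1/m_k(F_2)}$ in the exponent, so one needs $C^{e(F_1)} \ge \beta C$, i.e. either $C$ bounded below (which we may assume, enlarging $C$ only makes the event less likely) or one simply notes $e(F_1) \ge 1$ and chooses $\beta$ small enough that the inequality holds for all $C \ge C_0$; for $C < C_0$ the statement is trivially weaker. Everything else is a direct invocation of Janson's inequality together with Lemma~\ref{lemma:funionf}, which is why only a sketch is warranted.
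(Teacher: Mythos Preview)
Your proposal is correct and follows essentially the same approach as the paper: bound $\mu$ from below using the identity $v(F_1)-e(F_1)/m_k(F_1,F_2)=k-1/m_k(F_2)$, invoke Lemma~\ref{lemma:funionf} to show the off-diagonal correlation term is $o(\mu)$, and apply Janson's inequality. The paper's sketch uses the lower-tail deviation form of Janson rather than the $\Pr[X=0]$ form, but this is immaterial; your discussion of the $C$ versus $C^{e(F_1)}$ bookkeeping is in fact more careful than the paper's (which simply writes $\eps C$ in place of $\eps C^{e(F_1)}$, tacitly using $C\ge 1$).
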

\begin{proof}
Let $X$ be the number of members of $\mathcal F$ appearing in $\Hknp$. We bound $E[X]$ as follows,
$$
E[X] = |\mathcal F| p^{e(F)} \ge \varepsilon n^{v(F_1)} (Cn^{-1/m_k(F_1, F_2)})^{e(F_1)}
\ge \eps C n^{k - 1/m_k(F_2)}.
$$
By Lemma \ref{lemma:funionf} and  Janson's inequality  (e.g., see 
Theorem~$2.14$ in~\cite{purple})
%Theorems $8.1.1$ and $8.1.2$ in \cite{alon2015probabilistic})
  we have %for $n$ sufficiently large
$$
\Pr[ X < (1 - \alpha) E[X]] \le e^{-\alpha^2 E[X] /3},
$$ for any constant $\alpha > 0$. Using the estimate on $E[X]$, this implies the lemma. 
\end{proof}

Finally, we state our main tool, the \emph{container} theorem of Saxton and Thomason \cite{saxton2015hypergraph} (the similar result was independently obtained by Balogh, Morris and Samotij \cite{balogh2015independent}). 

% \begin{definition}
% For a given set $S$ and constants $\ell, s \in \NN$, let $\cT_{\ell, s}(S)$ denote the family of $\ell$-tuples of subsets of $S$ (not necessarily disjoint) with the total number of elements at most $s$, i.e.
% $$ 
%   \cT_{\ell, s}(S) = \left\{ T = (S_1, \ldots, S_\ell) \mid S_i \subseteq S \; \text{for} \; 1 \leq i \leq \ell \; \text{and} \; \left| T \right| \leq s \right\}.
% $$
% \end{definition}
\begin{definition}
For a given set $S$ and $\ell \in \NN$, let $\cT_{\ell}(S)$ denote the family of $\ell$-tuples of subsets of $S$ (not necessarily disjoint), i.e.
$$ 
  \cT_{\ell}(S) = \left\{ T = (S_1, \ldots, S_\ell) \mid S_i \subseteq S \; \text{for} \; 1 \leq i \leq \ell  \right\}.
$$
\end{definition}

Given a set $S$ and an $\ell$-tuple $T = (S_1, \ldots, S_\ell) \in \cT_\ell(S)$, let $|T|$ denote the size of the union of $S_i$'s, i.e. $|T| := \left| \bigcup_{i \in [\ell]} S_i\right|$. Moreover, for a subset $S' \subseteq S$ we write $T \subseteq S'$ to denote that $S_i \subseteq S'$ for every $i \in [\ell]$.

% \begin{theorem}[Theorem $2.3$, \cite{saxton2015hypergraph}] \label{thm:container}
% Let $F$ be a $k$-uniform hypergraph with $e(F) \geq 2$ and let $\eps > 0$ be a constant. Then there exist $n_0, \ell \in \NN$ such that the following holds: for every $n\ge n_0$  there exist $t=t(n) \in \NN$, pairwise distinct $\ell$-tuples $T_1,\ldots,T_{t} \in \cT_{\ell, \ell n^{k - 1/m_k(F)}}(E(K_n^{(k)}))$ and subgraphs $C_1,\ldots,C_{t} \subseteq K_n^{(k)}$, such that 
% \begin{enumerate}
% \item[(a)] each $C_i$ contains at most $\eps n^{v(F)}$ copies of $F$,
% \item[(b)] for every $F$-free $k$-uniform hypergraph $G \subseteq K_n^{(k)}$ there exists $1\le i\le t$ such that $G \subseteq C_i$ and $T_i \subseteq E(C_i)$.
% \end{enumerate}
% \NS{ I have some problems with the container theorem stated here:
% In order to do union bound on $T_i's$  the theorem should state that $C_i$ is uniquely determined by $T_i$. 
% I don't see the point of parameter $t$. The number of $T_i$'s is determined by the fact that they have few edges. In this way the theorem is harder to parse.
% }
% \end{theorem}

%\NS{I find the following statement much easier to understand, at least when application is in mind. Please check if it is also correct (: }

\begin{theorem}[Theorem $2.3$ from~\cite{saxton2015hypergraph}] \label{thm:container}
Let $F$ be a $k$-uniform hypergraph with $e(F) \geq 2$ and let $\eps > 0$ be a constant. Then there exists $\ell \in \NN$ such that for every $n\ge \ell$ there exists a function $f : \cT_{\ell}(E(K_n^{(k)})) \to 2^{E(K_n^{(k)})}$ with the following property: for every $F$-free $k$-uniform hypergraph $H \subseteq K_n^{(k)}$ there exists $T \in \cT_{\ell}(E(K_n^{(k)}))$ such that 
\begin{enumerate}
\item[(a)] $T \subseteq E(H) \subseteq f(T)$,
\item[(b)] the number of edges in $T$ is at most $|T| \le \ell n^{k - 1/m_k(F)}$,
\item[(c)] the hypergraph induced by the edge set $f(T)$ contains at most $\eps n^{v(F)}$ copies of $F$.
\end{enumerate}
\end{theorem}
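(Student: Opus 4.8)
The plan is to derive this from the \emph{abstract} hypergraph container lemma of Saxton--Thomason and of Balogh--Morris--Samotij \cite{saxton2015hypergraph,balogh2015independent} (of which the stated theorem is the ``counting-copies-of-$F$'' specialization), applied to the \emph{copy hypergraph} of $F$. Let $\cH = \cH(F,n)$ be the $e(F)$-uniform hypergraph with vertex set $V(\cH) = E(\Knk)$ whose edges are exactly the edge sets of the copies of $F$ in $\Knk$; the hypothesis $e(F) \ge 2$ guarantees $\cH$ is at least $2$-uniform. Then $N := |V(\cH)| = \binom{n}{k} = \Theta(n^{k})$, the edge count is $e(\cH) = \Theta(n^{v(F)})$ (and in fact $e(\cH) \le n^{v(F)}$), the average degree is $d := e(F)\,e(\cH)/N = \Theta(n^{v(F)-k})$, and a $k$-uniform $H \subseteq \Knk$ is $F$-free precisely when $E(H)$ is an independent set of $\cH$. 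The map $f$ we must build is nothing but a container assignment for $\cH$: we need fingerprints of total size $O(n^{k - 1/m_k(F)})$ and containers spanning at most $\eps n^{v(F)}$ edges of $\cH$.

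The first and main computational step is to verify the co-degree hypotheses of the container lemma for $\cH$ with parameter $\tau := D\, n^{-1/m_k(F)}$, where $D = D(\eps,F)$ is a large constant to be fixed. Let $\sigma$ be a $j$-element set of vertices of $\cH$, i.e.\ a set of $j$ distinct hyperedges of $\Knk$, and let $\deg_{\cH}(\sigma)$ be the number of edges of $\cH$ containing $\sigma$. This is nonzero only if the $j$ hyperedges of $\sigma$ lie together inside some copy of $F$, in which case they span a subhypergraph isomorphic to some $J \subseteq F$ with $e(J) = j$, and then $\deg_{\cH}(\sigma) = \Theta\bigl(n^{v(F) - v(J)}\bigr)$ by counting the extensions of the fixed copy of $J$ to a copy of $F$. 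Since $J \subseteq F$ we have $d_k(J) \le m_k(F)$: for $j = 1$ this reads $v(J) = k$, while for $j \ge 2$ it gives $v(J) - k \ge (j-1)/m_k(F)$. In either case $\deg_{\cH}(\sigma) = O\bigl(n^{v(F) - k - (j-1)/m_k(F)}\bigr) = O(\tau^{j-1}d)$. Feeding these bounds into the weighted co-degree function of the container lemma shows it is at most a constant depending on $F$ and on $D^{-1}$; choosing $D$ large makes it fall below the threshold the lemma requires, and $\tau \le 1/2$ holds for all large $n$. This is the only point at which the exponent $1/m_k(F)$ enters.

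With the hypotheses in place, one application of the container lemma produces a family of fingerprint tuples, each of total size at most $\tau N = D\, n^{k - 1/m_k(F)}$, together with a map $g$ such that every independent set $I$ of $\cH$ has a fingerprint $T \subseteq I$ with $I \subseteq g(T)$ and $e(\cH[g(T)]) \le (1 - \eps_0)\,e(\cH)$, where $\eps_0 = \eps_0(e(F)) > 0$ is absolute. We then iterate: starting from $\cH$, keep re-applying the lemma to the current container $\cH[C]$ as long as $e(\cH[C]) \ge \eps\, e(\cH)$. This goes through because removing vertices only decreases co-degrees, while $d(\cH[C]) = e(F)\,e(\cH[C])/|V(\cH[C])| \ge \eps\, d$; hence the co-degree function of $\cH[C]$ at the same $\tau$ exceeds that of $\cH$ by a factor of at most $\eps^{-1}$, which we absorb by taking $D = D(\eps,F)$ large enough. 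Each round shrinks the edge count by a factor $(1-\eps_0)$, so after $t_0 = O_{\eps,F}(1)$ rounds we reach a container $C$ with $e(\cH[C]) < \eps\, e(\cH) \le \eps\, n^{v(F)}$, i.e.\ $C$ contains fewer than $\eps n^{v(F)}$ copies of $F$ --- this is (c). Concatenating the fingerprint tuples produced over the at most $t_0$ rounds into a single tuple $T$ (padded with empty sets to length $\ell$) and declaring $f(T)$ to be the final container gives the required map on $\cT_{\ell}(E(\Knk))$, provided $\ell$ is chosen at least $t_0 D$, at least $t_0$ times the per-round tuple length from the lemma, and large enough for the construction to make sense once $n \ge \ell$. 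Every $F$-free $H$ follows its own chain of containers, so $T \subseteq E(H) \subseteq f(T)$, which is (a), and $|T| \le t_0\,\tau N \le \ell\, n^{k - 1/m_k(F)}$, which is (b).

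The main obstacle is precisely this iteration: one must ensure that a container still holding a constant fraction of all copies of $F$ behaves, as a hypergraph, enough like $\cH$ for the container lemma to re-apply with the \emph{same} $\tau$, so that the accumulated fingerprint size stays $O(n^{k - 1/m_k(F)})$. This is exactly why the iteration is stopped as soon as the copy count drops below $\eps n^{v(F)}$ rather than being driven to zero --- that bounds the number of rounds by an absolute constant --- and why the averaging bound on $d(\cH[C])$ that controls the co-degree conditions throughout is needed. The remainder is bookkeeping: tracking the constants $\eps_0(e(F))$, $D(\eps,F)$, $t_0(\eps,F)$, $\ell(\eps,F,k)$, and packaging the list of per-round fingerprint tuples as a single member of $\cT_{\ell}$.
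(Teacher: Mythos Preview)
The paper does not prove this statement at all: it is quoted verbatim as Theorem~2.3 of Saxton--Thomason \cite{saxton2015hypergraph} and used as a black box throughout Section~\ref{sec:asymm_proof}. Your derivation---applying the abstract container lemma to the $e(F)$-uniform ``copy hypergraph'' $\cH$, verifying the co-degree function via the bound $d_k(J)\le m_k(F)$ for all $J\subseteq F$, and iterating a bounded number of times until the container holds fewer than $\eps n^{v(F)}$ copies---is exactly the standard route by which this specialization is obtained, and the sketch is correct. The only point worth tightening is the iteration step: when you pass to $\cH[C]$ you should be explicit that the fingerprint produced in that round lies inside the independent set $I$ (not merely in $C$), so that the concatenated tuple $T$ really satisfies $T\subseteq E(H)$ as claimed in (a); this is immediate from the container lemma but is the one place a reader might pause.
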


With these statements at hand, we are ready to prove Theorem \ref{thm:asymmetric-1}.

\begin{proof}[Proof of Theorem \ref{thm:asymmetric-1}]
Let $F_1, \ldots, F_r$ be $k$-uniform hypergraphs such that $m_k(F_1) \ge m_k(F_2) \ge \ldots \ge m_k(F_r)$ and $F_1$ is strictly balanced with respect to $m_k(\cdot, F_2)$. Since the property
$$
  H \rightarrow (F_1, \ldots, F_r)
$$
is monotone increasing, we may assume that $p = Cn^{-1/m_k(F_1, F_2)}$ for some constant $C > 0$ which we determine later. This assumption is not necessary, but it will make calculations easier. 

%\RN{I had a hard time grasping the `proof strategy', so I'd suggest omitting it and just going straight to the proof instead. But if anybody thinks this might help the reader, I don't mind if it stays.}
The overall proof strategy is as follows.  If a given $k$-uniform hypergraph $H$ on $n$ vertices is not Ramsey, i.e.\ 
 $ H \not\rightarrow (F_1, \ldots, F_r)$, then there exists a partition $E_1, \ldots, E_r \subseteq E(H)$ such that the $k$-uniform hypergraph $G_i = (V(H), E_i)$ is $F_i$-free for every $1 \le i \le r$. We clearly think of the edges from $E_i$ as being coloured in colour $i$. Next, we use Container Theorem, Theorem~\ref{thm:container}, to `place' each $E_i$ (where $2\le i\le r$) into some container $C_i$ with less than $\alpha n^{v(F_i)}$ copies of $F_i$. This way Theorem~\ref{thm:ramsey_count} asserts that the hypergraph $R$ on the remaining edges, i.e.\ $R=\left([n],\binom{[n]}{k}\setminus \cup_{i=2}^r E_i\right)$ contains at least $\alpha n^{v(F_1)}$ copies of $F_1$. Assuming that 
 $H\sim \Hknp$, we infer that the edges from $E(H)\cap E(R)$ will be colored in colour $1$, which will allow us to get sufficiently small probability that none of the at least $\alpha n^{v(F_1)}$ copies of $F_1$ lands in $E_1$ (this is an application of Janson's inequality, Lemma~\ref{lemma:janson}). Of course, there are some subtleties as to how we  define certain probability events. In particular the options for $C_i$'s and $R$ have to be considered `beforehand' (via Theorem~\ref{thm:container}).
 
%Given a hypergraph $H$ on $n$ vertices such that $H \nrightarrow (F_1, \ldots, F_r)$. Then there exists a partition $E_1, \ldots, E_r \subseteq E(H)$ such that the graph $G_i = (V(H), E_i)$ is $F_i$-free for every $1 \le i \le r$. 
For $H\sim\Hknp$ with $p$ as specified above, we expect that most copies of $F_1$ do not have edges in common (i.e.\ are isolated). This will be indeed the case with high probability and the proof outline will be carried out in this case.  The unlikely case will simply follow from Markov's inequality, which we first turn to. 

 \textbf{Many non-isolated copies.} % Let $H$ be a hypergraph on $n$ vertices such that $H \nrightarrow (F_1, \ldots, F_r)$. Then there exists a partition $E_1, \ldots, E_r \subseteq E(H)$ such that the graph $G_i = (V(H), E_i)$ is $F_i$-free for every $1 \le i \le r$. We clearly think of the edges from $E_i$ as being coloured in colour $i$. 
Let $\cF=\cF(F_1,H)$ be the family of all subgraphs of $H$ isomorphic to $F_1$ and let $\cI = \cI(F_1,H) \subseteq \cF$ be the subfamily of all  \emph{isolated} subgraphs, i.e.
$$
  \cI = \left\{ F' \in \cF \mid \forall F'' \in \cF \setminus \{F'\}, \; E(F') \cap E(F'') = \emptyset \right\}.
$$
\begin{claim}\label{cl:few_nonisolated}
There exists a constant $\delta > 0$ such that for $H\sim\Hknp$ we a.a.s. have
\begin{equation} \label{eq:F_minus_I}
  |\cF \setminus \cI| \le n^{k - 1/m_k(F_2) - \delta}.
\end{equation}
\end{claim}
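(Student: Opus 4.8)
The plan is to bound $|\cF \setminus \cI|$ by a constant multiple of the number of subgraphs of $H$ that arise as the union of two distinct, edge-intersecting copies of $F_1$, and then to control this count by a first-moment computation (Lemma~\ref{lemma:funionf}) followed by Markov's inequality.

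First I would introduce $\mathcal{G}$, the family of all subgraphs of $K_n^{(k)}$ that can be written as $F' \cup F''$ for two \emph{distinct} copies $F', F''$ of $F_1$ with $e(F' \cap F'') \ge 1$; this is precisely the type of family to which Lemma~\ref{lemma:funionf} applies. The key (elementary) combinatorial observation is that every non-isolated copy $F' \in \cF \setminus \cI$ is contained in at least one member of $\mathcal{G}$ that is a subgraph of $H$ — namely $F' \cup F''$ for a witnessing $F'' \in \cF$ — while conversely every member of $\mathcal{G}$ has order at most $2v(F_1)$, hence contains at most some constant $M = M(F_1)$ copies of $F_1$. Summing over members of $\mathcal{G}$ present in $H$, we therefore get $|\cF \setminus \cI| \le M \cdot X$, where $X$ denotes the number of members of $\mathcal{G}$ that are subgraphs of $H$.

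Next I would invoke Lemma~\ref{lemma:funionf} with $\mathcal{F} := \mathcal{G}$ (its hypotheses $m_k(F_1) \ge m_k(F_2) > 0$ and strict balancedness of $F_1$ with respect to $m_k(\cdot, F_2)$ are exactly the standing assumptions), which yields a constant $\delta_0 > 0$ with $\E[X] \le C' n^{k - 1/m_k(F_2) - \delta_0/2}$. Markov's inequality then gives
$$
  \Pr\!\left[ X \ge n^{k - 1/m_k(F_2) - \delta_0/3} \right] \le C' n^{-\delta_0/6} = o(1),
$$
so a.a.s. $X < n^{k - 1/m_k(F_2) - \delta_0/3}$, and hence a.a.s. $|\cF \setminus \cI| \le M X < M\, n^{k-1/m_k(F_2)-\delta_0/3} \le n^{k - 1/m_k(F_2) - \delta}$ for $\delta := \delta_0/4$ and all sufficiently large $n$ (since $M n^{-\delta_0/12} \to 0$). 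Taking this $\delta$ as the constant in the claim completes the argument.

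I do not expect a genuine obstacle here: all the density bookkeeping has already been absorbed into the proof of Lemma~\ref{lemma:funionf}, and what remains is a routine first-moment plus Markov estimate. The only point needing (minor) care is the overcounting step — verifying that assigning to each non-isolated $F_1$-copy a member of $\mathcal{G}$ containing it inflates the count by at most the bounded factor $M$ — which is immediate from the fact that members of $\mathcal{G}$ have bounded order.
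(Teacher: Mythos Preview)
Your proposal is correct and follows essentially the same approach as the paper: bound $|\cF\setminus\cI|$ by a constant times the number $X$ of unions of two edge-intersecting $F_1$-copies in $H$, apply Lemma~\ref{lemma:funionf} to control $\E[X]$, and finish with Markov's inequality. The paper's argument is identical up to the bookkeeping of the constant $\delta$.
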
 
\begin{proof}
By Lemma \ref{lemma:funionf} we know that the expected number of hypergraphs $F \subseteq H$ which can be obtained as a union of two distinct $F_1$-copies intersecting on at least one edge is at most $n^{k - 1/m_k(F_2) - \delta/2}$, for some constant $\delta > 0$.
 Note that for each $F' \in \cF \setminus \cI$ 
there exists $F'' \in \cF$ such that $S := F' \cap F''$ contains at least one edge
and  $F:=F' \cup F'' \subseteq H$. As  there are only 
constantly many different copies of $F_1$ contained in $F=F' \cup F''$,  by  previous  observations we have that the expected size of $|\cF \setminus \cI|$ is $O(n^{k - 1/m_k(F_2) - \delta/2})$. 
From Markov's inequality we obtain that the actual number of such subgraphs is a.a.s.\ at most $n^{k - 1/m_k(F_2) - \delta}$.
\end{proof}

% From the assumption that $F_1$ is strictly balanced with respect to $m_k(\cdot, F_2)$ and $S$ is isomorphic to a proper subgraph of $F_1$ we have 
% \begin{equation} \label{eq:delta_S}
%   \frac{e(S)}{v(S) - k + 1/m_k(F_2)} < m_k(F_1, F_2) - \delta_S
% \end{equation}
% for some $\delta_S > 0$. Since there are only constantly many subgraphs $S \subseteq F_1$, there exists a constant $\delta > 0$ such that the previous inequality holds with $\delta_S = 2\delta$ for every $S \subseteq F_1$ with $e(S) \ge 1$. Using the assumption $p = Cn^{-1/m_k(F_1, F_2)}$, a straightforward calculation shows that the expected number of subgraps of $H$ isomorphic to $F' \cup F''$ is at most of order
% $$
%   n^{v(F' \cup F'')} p^{e(F' \cup F'')} = 
%   n^{2v(F_1) - v(S)} p^{2e(F_1) - e(S)} = 
%   \frac{n^{2(k - 1/m_k(F_2))}}{n^{v(S)}p^{e(S)}} 
%    \stackrel{\eqref{eq:delta_S}}{=} O(n^{k - 1/m_k(F_2) - 2\delta}).
% $$
% From Markov's inequality we obtain that the actual number of such subgraphs is a.a.s at most $n^{k - 1/m_k(F_2) - \delta}$, where $\delta = \delta' /2$. Finally, since there are only constantly many non-isomorphic hypergraphs obtained from a union of two copies of $F_1$ this implies inequality \eqref{eq:F_minus_I}.

 \textbf{Few non-isolated copies.}
%Next we additionally assume that the hypergraph $H$ with $ H \not\rightarrow (F_1, \ldots, F_r)$ also satisfies for $F_1$ the upper bound in~\eqref{eq:F_minus_I} for $|\cF \setminus \cI|$, i.e.\  $ |\cF \setminus \cI| \le n^{k - 1/m_k(F_2) - \delta}$. 
Let us assume that $H$ is such that the bound in \eqref{eq:F_minus_I} holds. Next, note that for each $F' \in \cF$ at least one edge from $E(F')$ does not belong to $E_1$, as otherwise there exists a copy of $F_1$ in $G_1$, which is monochromatic. Moreover, we can assume that this holds for \emph{exactly} one edge if $F' \in \cI$: since each edge of $F'$ belongs to exactly one copy of $F_1$ in $H$ (follows from the definition of $\cI$), by re-colouring all but one (arbitrarily chosen) edge from $E(F') \setminus E_1$ with colour $1$ we do not create a copy of $F_1$ in colour $1$. Since no new edge gets a colour $i \ge 2$, this clearly does not create a monochromatic copy of any $F_i$ in the corresponding colour. Next, for $i \ge 2$ we partition each colour class $E_i$ into $I_i$ (``isolated'' edges in $E_i$) and $L_i$ (``leftover'' edges),
$$
  I_i = \bigcup_{F' \in \cI} E(F') \cap E_i \quad \text{and} \quad L_i = E_i \setminus I_i.
$$
By the previous assumption (that all but exactly one edge in $F' \in \cI$ belong to $E_1$) we have that for each edge $e \in I := \bigcup_{i = 2}^r I_i$ there exists a unique $F_e \in \cI$ with $e \in E(F_e)$, and $E(F_e) \cap E(F_{e'}) = \emptyset$ for different edges $e, e' \in I$. Finally, note that we can also assume that every edge in $H$ which does not belong to a copy of $F_1$ has  colour $1$. It then follows that each edge in $L_i$ belongs to some $F' \in \cF \setminus \cI$, and from $ |\cF \setminus \cI| \le n^{k - 1/m_k(F_2) - \delta}$ we conclude
\begin{equation} \label{eq:E_i}
  |L_i| = O(n^{k - 1/m_k(F_2) - \delta}).
\end{equation}

Next, we use the container theorem to ``approximate'' each of the sets $I_i$. Let $\alpha$ be the constant given by Theorem~\ref{thm:ramsey_count} for $F_1, \ldots, F_r$ and set $\varepsilon = \alpha / 2$. 
% Furthermore, let $\ell_i \in N$, $T_1^i, \ldots, T_{t_i}^i \in \cT_{\ell_i, s_i}(E(K_n^{(k)})$ (where $s_i = \ell_i n^{k - 1/m_k(F_i)}$) and $C_1^{i}, \ldots, C_{t_i}^{i} \subseteq K_n^{(k)}$ be $\ell_i$-tuples and subgraphs obtained by applying Theorem \ref{thm:container} with $\varepsilon$ and $F = F_i$, for each $i \ge 2$.
Furthermore, let $\ell_i \in \NN$ and $f_i : \cT_{\ell_i}(E(K_n^{(k)})) \to 2^{E(K_n^{(k)})}$ be functions obtained by applying Theorem \ref{thm:container} with $\varepsilon$ and $F = F_i$, for each $i \ge 2$.
 Since the hypergraph induced by the set of edges $I_i$ is $F_i$-free there exists an 
 $\ell_i$-tuple $T^i \in \cT_{\ell_i}(E(K_n^{(k)}))$ %and a subhypergraph $C^i \subseteq K_n^{(k)}$, induced by $f_i(T^i)$, 
 such that 
$$
 %\bigcup_{j=1}^{\ell_i} T^i_j \subseteq I_i \quad \text{ and } \quad H_i \subseteq C^i.
 T^i \subseteq I_i \subseteq f_i(T^i).
$$
Let
$$
  R = E(K_n^{(k)}) \setminus \bigcup_{i = 2}^r (f_i(T^i) \cup L_i).
$$
Note that set $R$ is uniquely determined by $\mathbf T = (T^2, \ldots, T^r)$ and 
 $\mathbf{L} = (L_2, \ldots, L_r)$, where each $L_i \subseteq E(K_n^{(k)})$ is a subset of 
size $O(n^{k - 1/m_k(F_2) - \delta})$. 
%As by Theorem \ref{thm:container} the hypergraphs  $C^2, \ldots, C^r$ are determined by  
%via functions $f_2, \ldots, f_r$, 
Therefore, we can enumerate all $R$ by going over all possible choices for $\mathbf T$ and $\mathbf L$. 
We refer to the set $R$ fixed by the choice of $\mathbf T$ and $\mathbf L$  as $R(\mathbf T, \mathbf L)$.
% Furthermore,  by Theorem \ref{thm:container} we have that $R$ is determined by the choice of $\cT = (T^1, \ldots, T^r)$, so we sometimes refer to it as $R(\cT)$.
The following claim plays the central role in our argument.

\begin{claim} \label{claim:R}
Given the tuples $\mathbf T = (T^2, \ldots, T^r)$ and $\mathbf{L} = (L_2, \ldots, L_r)$ as described above, the hypergraph induced by the set of edges $R=R(\mathbf{T},\mathbf{L})$ contains at least $\alpha n^{v(F_1)}$ copies of $F_1$.
\end{claim}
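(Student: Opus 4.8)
The plan is to exhibit a deterministic $r$-edge-colouring of $K_n^{(k)}$ in which colour $1$ is used \emph{exactly} on the edges of $R = R(\mathbf T, \mathbf L)$, and then to play the quantitative Ramsey theorem (Theorem~\ref{thm:ramsey_count}) against the container bound (property~(c) of Theorem~\ref{thm:container}) to conclude that this colouring must contain $\ge \alpha n^{v(F_1)}$ monochromatic copies of $F_1$, all of which then lie inside $R$.

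Concretely, I would define $c : E(K_n^{(k)}) \to [r]$ by $c(e) = 1$ for every $e \in R$, and, for every $e \notin R$, by letting $c(e)$ be the smallest index $i \in \{2, \ldots, r\}$ with $e \in f_i(T^i) \cup L_i$; such an $i$ exists by the very definition of $R$. The key conceptual point is that $c$ is a purely combinatorial object depending only on $(\mathbf T, \mathbf L)$ and makes no reference to $H$ or to the original colouring — which is exactly what is needed, since the claim is a deterministic statement about $R$.

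Now apply Theorem~\ref{thm:ramsey_count} to $F_1, \ldots, F_r$ and the colouring $c$: there is an index $i \in [r]$ such that $c$ has at least $\alpha n^{v(F_i)}$ copies of $F_i$ all of whose edges receive colour $i$. I claim $i = 1$. Suppose not, i.e.\ $i \ge 2$. Every colour-$i$ edge lies in $f_i(T^i) \cup L_i$, so each such monochromatic copy of $F_i$ is a subhypergraph of the hypergraph induced by $f_i(T^i) \cup L_i$. Copies entirely inside $f_i(T^i)$ number at most $\varepsilon n^{v(F_i)} = (\alpha/2) n^{v(F_i)}$ by property~(c) of Theorem~\ref{thm:container} (recall $\varepsilon$ was chosen to be $\alpha/2$). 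A copy meeting $L_i$ in at least one edge is obtained by fixing such an edge and then choosing the remaining $v(F_i)-k$ vertices and an embedding, so there are at most $O\!\left(|L_i|\, n^{v(F_i)-k}\right)$ of them; by \eqref{eq:E_i} this is $O\!\left(n^{v(F_i) - 1/m_k(F_2) - \delta}\right) = o(n^{v(F_i)})$. Hence for $n$ large the total number of monochromatic $F_i$-copies in colour $i$ is strictly less than $\alpha n^{v(F_i)}$ — a contradiction. Therefore $i = 1$, and since colour $1$ occurs precisely on $R$, all $\ge \alpha n^{v(F_1)}$ guaranteed monochromatic copies of $F_1$ are contained in the hypergraph induced by $R$, which is exactly the claim.

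The only step requiring genuine care is the bookkeeping for the leftover sets $L_i$: one must verify that the crude count of $F_i$-copies touching $L_i$ is of strictly lower order than $n^{v(F_i)}$, which is where the bound $|L_i| = O(n^{k - 1/m_k(F_2) - \delta})$ from \eqref{eq:E_i} is essential (and where it matters that $m_k(F_2) > 0$). Everything else is a direct combination of the three cited ingredients, with no probabilistic input — the randomness of $H$ has already been spent in establishing \eqref{eq:E_i} and in the reduction preceding the claim.
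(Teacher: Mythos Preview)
Your proof is correct and follows essentially the same approach as the paper: both define an auxiliary $r$-colouring of $K_n^{(k)}$ with colour~$1$ exactly on $R$ and colours $i\ge 2$ inside $f_i(T^i)\cup L_i$, bound the $F_i$-copies in each $f_i(T^i)\cup L_i$ by $(\alpha/2)n^{v(F_i)} + o(n^{v(F_i)}) < \alpha n^{v(F_i)}$ using Theorem~\ref{thm:container}(c) and \eqref{eq:E_i}, and then invoke Theorem~\ref{thm:ramsey_count} to force the many monochromatic copies into colour~$1$. The only cosmetic difference is that you pick the smallest admissible $i\ge 2$ while the paper picks an arbitrary one.
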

\begin{proof}
  From Theorem \ref{thm:container} we have that each $f_i(T^i)$ ($i \ge 2$) contains at most $\alpha n^{v(F_i)} / 2$ copies of $F_i$. Furthermore, the number of copies of $F_i$ in $f_i(T^i) \cup L_i$ which contain an edge from $L_i$ is at most $n^{v(F_i) - k} |L_i|$ and from \eqref{eq:E_i} we conclude that there are $o(n^{v(F_i)})$ such copies. In total, $f_i(T^i) \cup L_i$ contains less than $\alpha n^{v(F_i)}$ copies of $F_i$, for every $i \ge 2$.

  Consider the auxiliary $r$-edge-colouring of $K_n^{(k)}$ defined as follows: an edge $e$ gets the colour $1$ if $e \in R$ and otherwise it gets an arbitrary colour $i \ge 2$ such that $e \in f_i(T^i) \cup L_i$. From the previous discussion we have that each colour $i \ge 2$ contains less than $\alpha n^{v(F_i)}$ copies of $F_i$ and Theorem \ref{thm:ramsey_count} implies that $R$ has to contain at least $\alpha n^{v(F_1)}$ copies of $F_1$.
\end{proof}

Since $E_i = I_i \cup L_i \subseteq f_i(T^i) \cup L_i$ and each $F' \in \cF$ contains an edge from some $E_i$ with $i \ge 2$ we conclude that $E(F') \not \subseteq R$ for every $F' \in \cF$. That is, the hypergraph $H$ completely avoids all copies of $F_1$ which are contained in $R$. Recall that for $\mathbf{T}=(T^2,\ldots, T^r)$  we have $T^i_j \subseteq I_i$ for all $j\in[\ell_i]$. We define the set $T = T(\mathbf{T})$ as follows:
$$
T:=T(\mathbf{T}) := \bigcup_{i \ge 2}\bigcup_{j=1}^{\ell_i} T^i_j.
$$
Observe that for each $e \in T$
there exists a copy of $F_1$ in $H$ containing $e$, say $F_e$, such that $E(F_e) \cap E(F_{e'}) = \emptyset$ for different $e, e' \in T$ (as we have already observed this for all edges in $I \supseteq T$). Thus, we obtain a set of copies of $F_1$ in $H$ which are `rooted' at the edges from $T$.

% To summarize, if $H \nrightarrow (F_1, \ldots, F_r)$ then either $\cF \setminus \cI$ is bigger than in \eqref{eq:F_minus_I} (which happens with probability $o(1)$) or there exist $\mathbf{j} = (j_2, \ldots, j_r)$ (where $
% j_i \in [t_i]$) and $\mathbf{L} = (L_2, \ldots, L_r)$ where each $L_i \subseteq E(K_n^{(k)})$ is a subset of 
% size $O(n^{k - 1/m_k(F_2) - \delta})$ such that $H$ satisfies properties $\cP_1(R)$ and $\cP_2(T)$, where $R = R
% (\mathbf{j}, \mathbf{L})$ and $T = T(\mathbf{j})$ are as defined earlier and
To summarize, if $H \nrightarrow (F_1, \ldots, F_r)$ then either $\cF \setminus \cI$ is bigger than $n^{k - 1/m_k(F_2) - \delta}$  or there exist $\mathbf T = (T_2, \dots, T_r)$ and $\mathbf{L} = (L_2, \ldots, L_r)$ such that $H$ satisfies properties $\cP_1(R)$ and $\cP_2(T)$, where  $R = R(\mathbf T, \mathbf L)$ and $T = T(\mathbf T)$ are as defined earlier and 
\begin{itemize}
  \item $\cP_1(R)$ denotes the property that $R \cap E(H)$ does not contain a copy of $F_1$, 
  \item $\cP_2(T)$ denotes the property that for each $e \in T$ there exists a copy of $F_1$ in $H$ (denoted by $F_e$) such that $E(F_e) \cap E(F_{e'}) = \emptyset$ for different $e, e' \in T$.
\end{itemize}

 \textbf{Estimating the probability $H \nrightarrow (F_1, \ldots, F_r)$.}
 Finally, for $H\sim\Hknp$, 
 we can upper bound the probability that $H \nrightarrow (F_1, \ldots, F_r)$ as follows
 (recall $\cF=\cF(F_1,H)$ and $\cI=\cI(F_1,H)$): 
\begin{align*}
  % \Pr[ H \nrightarrow (F_1, \ldots, F_r) ] &\le \Pr[ |\cF \setminus \cI| > n^{k - 1/m_k(F_2) - \delta} ] + \Pr[ \exists \mathbf{j}, \mathbf{L} \; : \; H \in \cP_1(R(\mathbf{j},\mathbf{L})) \cap \cP_2(T(\mathbf{j}))] \\
  %   &\le o(1) + \sum_{\mathbf{j}} \sum_{\mathbf{L}} \Pr[ H \in \cP_1(R(\mathbf{j},\mathbf{L})) \cap \cP_2(T(\mathbf{j}))].
  \Pr[ H \nrightarrow (F_1, \ldots, F_r) ] &\le \Pr[ |\cF \setminus \cI| > n^{k - 1/m_k(F_2) - \delta} ] + \Pr[\exists \mathbf T, \mathbf{L} \; : \; H \in \cP_1(R(\mathbf T,\mathbf{L})) \cap \cP_2(T(\mathbf T))] \\
    &\le o(1) + \sum_{\mathbf T} \sum_{\mathbf{L}} \Pr[ H \in \cP_1(R(\mathbf T,\mathbf{L})) \cap \cP_2(T(\mathbf T))].
\end{align*}
 The first probability is $o(1)$ by Claim~\ref{cl:few_nonisolated}. 
Note that $\cP_1(R)$ is a decreasing and $\cP_2(T)$ is an increasing graph property, thus by the FKG inequality (see e.g.\ Theorem~6.3.3 in~\cite{alon2015probabilistic}) we know they are negatively correlated, i.e.
\begin{equation} \label{eq:H_not_Ramsey}
  \Pr[ H \nrightarrow (F_1, \ldots, F_r) ] \le o(1) + \sum_{\mathbf T} \sum_{\mathbf{L}} \Pr[ H \in \cP_1(R(\mathbf T,\mathbf{L})) ] \cdot \Pr[ H \in \cP_2(T(\mathbf T))]
\end{equation}
Our aim is to show that the double sum also sums up to $o(1)$. 

For fixed choices of $\mathbf T$ and $\mathbf{L}$, and therefore for fixed $R$, from  Lemma \ref{lemma:janson}  and Claim \ref{claim:R}  we get
$$
  \Pr[\cP_1(R)] \le e^{-\beta Cn^{k - 1/m_k(F_2)}},
$$
for some $\beta > 0$. On the other hand, the expected number of mappings $T \rightarrow \cF$ which are witnesses for the property $\cP_2(T)$ is at most
$$
  (n^{v(F_1) - k} p^{e(F_1)})^{|T|} = C^{e(F_1)|T|} n^{-|T| / m_k(F_2)}.
$$
Consequently, the probability that $H$ has the property $\cP_2(T)$ is at most this value. We can now upper bound the sum in \eqref{eq:H_not_Ramsey} as
\begin{equation} \label{eq:H_not_Ramsey1}
  \Pr[ H \nrightarrow (F_1, \ldots, F_r) ] \le o(1) +  e^{-\beta Cn^{k - 1/m_k(F_2)}} \cdot\left(\sum_{\mathbf T} \sum_{\mathbf{L}}  C^{e(F_1)|T|} n^{-|T| / m_k(F_2)}\right).
\end{equation}
Next, from $m_k(F_2) \ge m_k(F_i)$ for $i \ge 3$ we observe that Theorem~\ref{thm:container} implies that $|T^i| \le \ell n^{k - 1/m_k(F_2)}$ for $i \ge 2$, where $\ell = \max_{i \ge 2} \ell_i$. Therefore, for each $t \in \NN$ with $t \le r \ell n^{k - 1/m_k(F_2)}$ (the maximal size of $T$) we can upper bound the number of 
choices of $\mathbf T$ by picking $t$ edges and for each $T^i = (T^i_1, \ldots, T^i_{\ell_i}) \in \cT_{\ell_i}$ and each $j \in \{1, \ldots, \ell_i\}$ we decide which edges go to $T^i_j$. We can do this in at most
% choices of $\ell$-tuples $T^i \in \cT_{\ell_i, r \ell n^{k - 1/m_k(F_2)}}(E(K_n^{(k)}))$ such that $|T| = t$ by
$$
  \binom{n^k}{t} 2^{\ell r t}
$$
ways, where the $2^{\ell r t }$ factor comes from the upper bound on the number of ways how to distribute $t$
edges among $T_j^i$'s. On the other hand, we can choose each $L_i$ in at most
\begin{equation}\label{eq:balancedness}
  \binom{n^k}{C'n^{k - 1/m_k - \delta}} \le n^{k \cdot C' n^{k - 1/m_k(F_2) - \delta}} = e^{o(n^{k - 1/m_k(F_2)})}
\end{equation}
ways, for some constant $C' > 0$, cf.~\eqref{eq:E_i}. Using these estimates, we further bound the double sum in \eqref{eq:H_not_Ramsey1} as follows
$$
   \sum_{\mathbf T} \sum_{\mathbf{L}} \ldots
 \le e^{o(n^{k - 1/m_k(F_2)})} \sum_{t = 0}^{r \ell n^{k - 1/m_k(F_2)}}  \binom{n^k}{t} 2^{r \ell t} C^{e(F_1)t} n^{-t/m_k(F_2)} .
$$
Using the estimate $\binom{n}{\ell}\le (en/\ell)^\ell$, we further simplify the sum on the right hand side, %standard estimates for the binomial coefficient we bound the factors involving $t$,
\begin{align*}
  \sum_{t = 0}^{r \ell n^{k - 1/m_k(F_2)}} \binom{n^k}{t} 2^{r \ell t} C^{e(F_1)t} n^{-t/m_k(F_2)} 
  &\le \sum_{t = 0}^{r \ell n^{k - 1/m_k(F_2)}} \left( \frac{e n^k }{t} 2^{r \ell} C^{e(F_1)} n^{-1/m_k(F_2)} \right)^t \\
  &\le (r\ell n^{k-1/m_k(F_2)}+1) \left( \frac{e 2^{r\ell} C^{e(F_1)}}{r \ell} \right)^{r \ell n^{k - 1/m_k(F_2)}} \\
  &\le ( e 2^{r\ell} C^{e(F_1)})^{r \ell n^{k - 1/m_k(F_2)}},
\end{align*}
where we used the fact that the maximal element of the sum is attained for $t = r \ell n^{k - 1/m_k(F_2)}$, 
since $g(t):=(m/t)^t$ is monotone increasing for $x\in [0,m/e]$. Finally, we obtain using~\eqref{eq:H_not_Ramsey1}:
$$
  \Pr[ H \nrightarrow (F_1, \ldots, F_r) ] \le o(1) + e^{-\beta Cn^{k - 1/m_k(F_2)}} e^{o(n^{k - 1/m_k(F_2)})}  ( e 2^{r\ell} C^{e(F_1)} )^{r \ell n^{k - 1/m_k(F_2)}} = o(1),
$$
for sufficiently large $C > 0$.
\end{proof}

%%%% General theorem for the asymmetric 0-statement
%!TEX root = hypergraphs.tex
\section{Asymmetric Ramsey properties: 0-statement}
\label{sec:asymmetric}

In this section we prove Theorem \ref{thm:asymmetric-0}, which we repeat here for the convenience of the reader.

\begin{theorem*} 
Let $F_1, \ldots, F_r$ be $k$-uniform hypergraphs such that $F_2$ has at least three edges, $m_k(F_1) \geq m_k(F_2) \geq \dotsb \geq m_k(F_r)>0$ and
\begin{enumerate}[(i)]
\item $F_1$ and $F_2$ are strictly balanced and $m_2(F_2) \ge 1$,
\item $F_1$ is strictly balanced with respect to $m_k(\cdot, F_2)$,
\item $\mathcal{F}^*(F_1, F_2)$ is asymmetric-balanced,
\item for every hypergraph $G$ such that $m(G) \leq m_k(F_1, F_2)$ it follows that 
$$G \nrightarrow (F_1, F_2).$$
\end{enumerate}
Then there exists a constant $c > 0$ such that for $p \le c n^{-1 / m_k(F_1, F_2)}$
$$
\lim_{n \rightarrow \infty} \Pr \left[ \Hknp \rightarrow (F_1, \ldots, F_r) \right] = 0.
$$
\end{theorem*}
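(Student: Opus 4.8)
Write $\theta := m_k(F_1,F_2)$, and call a $2$-colouring of the edges of a hypergraph \emph{good} if no copy of $F_1$ is monochromatic in the first colour and no copy of $F_2$ is monochromatic in the second. The plan is first to reduce to two colours: it suffices to show that a.a.s.\ $\Hknp$ (at $p\le cn^{-1/\theta}$) admits a good $2$-colouring, since colouring its edges with colours $1,2$ accordingly and leaving colours $3,\dots,r$ empty certifies $\Hknp\nrightarrow(F_1,\dots,F_r)$. From now on only the hypotheses on $F_1,F_2$ are used; I speak of \emph{red} (which must stay $F_1$-free) and \emph{blue} (which must stay $F_2$-free).

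Next I would pass from the probabilistic statement to a deterministic one about ``clusters''. Given a hypergraph $H$, build the auxiliary graph on the set of all copies of $F_1$ and of $F_2$ in $H$, joining two such copies when they share an edge of $H$; call the sub-hypergraphs spanned by the connected components the \emph{clusters} of $H$. Distinct clusters are edge-disjoint and every copy of $F_1$ or $F_2$ in $H$ lies inside a single cluster, so a good $2$-colouring of each cluster, together with an arbitrary colouring of the edges lying in no copy of $F_1$ or $F_2$, yields a good $2$-colouring of $H$. By hypothesis (iv) a cluster $G$ has a good $2$-colouring as soon as $m(G)\le\theta$. Hence the whole theorem reduces to the statement: a.a.s.\ every cluster of $\Hknp$ satisfies $m(G)\le\theta$.

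The heart is this last statement, and it is where hypotheses (i)--(iii) enter. Suppose some cluster $G$ has $m(G)>\theta$ and take $G$ minimal; $G$ is a connected union of copies of $F_1$ and $F_2$, which I would peel off one at a time, always removing a leaf copy $C$ attached to the remainder along a proper subgraph $S\subsetneq C$. Strict $k$-balancedness of $F_1$ and of $F_2$ forces $d_k(S)<d_k(C)$, so the number of edges removed per vertex removed in each step is at least $m_k(F_1)>\theta$ when $C\cong F_1$, but is only governed by $m_k(F_2)\le\theta$ (plus the shape of $S$) when $C\cong F_2$; thus, for $m(G)$ to stay above $\theta$, the copies of $F_1$ must be ``supported'' by copies of $F_2$ in exactly the amalgamation pattern recorded by $\mathcal{F}^*(F_1,F_2)$. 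Asymmetric-balancedness (hypothesis (iii)) says precisely that such supported configurations sit at the critical ratio $\theta$ and that any non-generic amalgamation, or any extra copy, forces a sub-hypergraph with edges-to-vertices ratio strictly above $\theta$. A first-moment estimate — organised as a sum over the finitely many possible cores in $\mathcal{F}^*(F_1,F_2)$, each rooted at an attachment edge and extended by the remaining copies, with a geometric tail in the number of copies — then shows that for $c$ small enough no such $G$ appears a.a.s.; the requirements that $F_2$ have at least three edges and that $m_k(F_2)\ge 1$ are what make this accounting close up, and the ``equality only for the trivial peeling'' clause in the definition of asymmetric-balancedness is exactly what keeps the boundary terms of the first moment bounded. (Equivalently, one can phrase this via a greedy recolouring: start with all edges blue and repeatedly recolour one edge of an all-blue copy of $F_2$ red unless this completes a red $F_1$; getting stuck exhibits a copy of a member of $\mathcal{F}^*(F_1,F_2)$ inside $H$ together with a recursively attached configuration, and the probabilistic estimate is the same.)

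The main obstacle is entirely in the last step. Because $m_k(F_1)>\theta$, the random hypergraph at $p=\Theta(n^{-1/\theta})$ does contain dense sub-hypergraphs, and even contains copies of members of $\mathcal{F}^*(F_1,F_2)$; the content of hypotheses (i)--(iii) is precisely that these cannot chain up inside a single cluster so as to produce a sub-hypergraph of edges-to-vertices ratio exceeding $\theta$. Turning this into a proof requires a careful case analysis of all the ways copies of $F_1$ and $F_2$ can overlap and of which subgraphs can realise the extremal density, which is the technical core of the section.
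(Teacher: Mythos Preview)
Your reduction to two colours and the plan to invoke hypothesis (iv) on edge-disjoint pieces are both correct and match the paper. The gap is in \emph{which} pieces you feed to (iv).

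You split $H$ into clusters --- connected components of the auxiliary graph on all $F_1$- and $F_2$-copies --- and aim to show that a.a.s.\ every cluster $G$ has $m(G)\le\theta$. But clusters are not of bounded size: since $m_k(F_2)\le\theta$, at $p=cn^{-1/\theta}$ the random hypergraph is well above the appearance threshold for $F_2$, and copies of $F_2$ overlapping in single edges form arbitrarily long chains. Your peeling cannot give a geometric tail, because detaching a copy of $F_2$ attached along one edge contributes a factor
\[
n^{v(F_2)-k}p^{e(F_2)-1}=c^{e(F_2)-1}\,n^{(v(F_2)-k)(1-m_k(F_2)/\theta)},
\]
and the exponent of $n$ is nonnegative. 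So a first moment over clusters does not close, and there is no reason to expect every cluster to satisfy $m(G)\le\theta$.

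The paper inserts a preprocessing step you are missing. Call an edge $e$ \emph{closed} if there exist an $F_1$-copy and an $F_2$-copy through $e$ that are otherwise edge-disjoint, and \emph{open} otherwise. Open edges can always be coloured last: given a good colouring of $H-e$, colour $e$ blue if red would complete a red $F_1$, and red otherwise; openness guarantees this creates no blue $F_2$. Iterating, it suffices to good-colour the residual hypergraph $\hat H$ in which every edge is closed. Only now does one decompose $\hat H$ into minimal $(F_1,F_2)$-cores, and the point is that in $\hat H$ every edge of every $F_2$-copy is escorted by an $F_1$-copy, so the building blocks of a grow sequence are members of $\mathcal{F}^*(F_1,F_2)$ (plus stray $F_1$-copies), not bare $F_2$-copies. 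Attaching a generic $F^*\in\mathcal{F}^*$ along one edge now contributes $n^{v(F^*)-k}p^{e(F^*)-1}\le c^{e(F^*)-1}<1$ by asymmetric-balancedness --- this is the geometric tail you wanted, but it only materialises after the open-edge reduction. The paper then proves a dichotomy: every minimal closed core has size at most a constant $L$ (via a grow-sequence argument that tracks open edges created versus destroyed at each step; the three-edge condition on $F_2$ is exactly what makes this bookkeeping work), and a first moment over the finitely many shapes of size $\le L$ shows each has $m\le\theta$ a.a.s. Your greedy-recolouring variant in the final parenthesis is closer in spirit, but it still lacks the open/closed mechanism that converts $F_2$-steps into $\mathcal{F}^*$-steps and makes the sequence terminate in bounded length.
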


% In particular, Theorem~\ref{thm:asymmetric-0} reduces the problem of determining the threshold to verifying certain conditions.

%\subsection{Proof of Theorem~\ref{thm:asymmetric-0}}
% It remains to prove the 0-statement.
Note that it is sufficient to prove the statement for the case of two colours. By having more than two colours we can always restrict to only the first two, which avoids a monochromatic copies of $F_i$ for $i \ge 3$. Thus we assume $r =2$ and we call the colours red and blue.

We use a grow-sequences approach very similar to the one in \cite{nenadov2015algorithmic,nenadov2016short}. %Unfortunately, we can not just take over the results from \cite{nenadov2015algorithmic} as we have to take care of two graphs, $F_1$ and $F_2$, at the same time. However, the overall strategy is the same.
We say an edge is \emph{closed} if it is contained in a copy of
\(F_1\) and a copy of \(F_2\) which are otherwise edge-disjoint, and
\emph{open} otherwise. 
The reason for this distinction is that open edges are easy to colour. Assume $e$ is an open edge and 
there exists a valid \(2\)-colouring (i.e.\ one without red copy of
$F_1$  and  blue copy of $F_2$) for \(H - e\), the hypergraph obtained from \(H\) by removing \(e\)
from the edge set. Then we can extend this colouring to \(H\) by using the fact that \(e\) is open: if there exists a copy of $F_1$ in $H$, say $\hat F_1$, which contains $e$ and is already monochromatic up to $e$ then colour $e$ with blue, and otherwise colour it with red. This clearly avoids a red copy of $F_1$ in $H$. Let us assume, towards a contradiction, that there exists a blue copy of $F_2$. First, note that any such copy has to contain $e$ and therefore $e$ has to be coloured with blue. However, this implies that such copy of $F_2$ does not intersect $\hat F_1$ on any other edge except $e$ (since every other edge of $\hat F_1$ is red), which contradicts the assumption that $e$ is open.

 %We can always assign either the colour
%red or blue to \(e\) unless \(e\) is contained in copies of $F_1$ and $F_2$ which
%are (up to \(e\)) already monochromatic red and blue, respectively. This implies that these
%two copies are edge-disjoint and contradicts the fact that \(e\) is open.  
Using the notion of an open edge, we can find a valid \(2\)-colouring of \(\Hknp\) by running the
following algorithm:
\begin{algorithm}[h]
  \DontPrintSemicolon
  \(\hat H := \Hknp\)\;
  \While{\(\hat H\) contains an open edge \(e\)}
  {
    \(\hat H \leftarrow \hat H - e\)\;
  }
  Colour \(\hat H\) without a red $F_1$ and a blue $F_2$\;
  Add the removed edges in reverse order and colour them appropriately.

  \vspace{0.4cm}
  \caption{Colouring algorithm.}\label{algo:col}
\end{algorithm}

Note that the obtained graph $\hat H$ is uniquely defined. Indeed, since if an edge is open in $H$ then it is also open in every $H' \subseteq H$, the order in which we remove open edges is irrelevant. %Therefore, $\hat H$ is uniquely defined. %, as by removing an edge an open edge can not become closed. Thus, the graph $\hat H$ is uniquely defined. 

Of course, the step ``Colour \(\hat H\)'' in the Algorithm \ref{algo:col} is the difficult one and the main part of the proof is to show that this is indeed possible with high probability. Our strategy is to first show that $\hat H$ can be split into hypergraphs of constant size which can be coloured separately (using properties (i)--(iii)). Then using the bound on $p$ we deduce that every such hypergraph has small density and, finally, from the assumption (iv) we conclude that it can be coloured without red $F_1$ or blue $F_2$. To state this precisely we need a couple of definitions.

\begin{definition}[$(F_1, F_2)$-core]
We say a subgraph $G' \subseteq \hat H$ (where $\hat H$ is the resulting graph obtained by the Algorithm \ref{algo:col}) is an  $(F_1, F_2)$-core if every copy of $F_1$ or $F_2$ in $\hat H$ is either  contained in $G'$ or edge-disjoint from $G'$. %Furthermore, $G' \subseteq \hat H$ is closed if every edge in $G'$ is closed. %\RN{Isn't $G'$ closed by the definition?}
\end{definition}

Since $\hat H$ is closed by the definition of the algorithm, every $(F_1, F_2)$-core is closed as well. However, to emphasize this fact we shall sometimes explicitly write that the $(F_1, F_2)$-core under consideration is closed.

Next, let $\hat H_1, \ldots \hat H_t$ be a partition of $\hat H$ into edge-disjoint minimal (by subgraph inclusion) closed $(F_1, F_2)$-cores. By the definition of the core and the assumption that $\hat H_i$'s are edge-disjoint, if there exists a valid colouring of each $\hat H_i$ then the same colourings induce a valid colouring of $\hat H_1 \cup \ldots \cup \hat H_t = \hat H$. The following lemma is the key ingredient in the proof of Theorem \ref{thm:asymmetric-0}.

\begin{lemma}
 \label{lemma:boundedcore}
 Let $F_1$ and $F_2$ be as in Theorem \ref{thm:asymmetric-0}. Then there exist constants $c = c(F_1, F_2) > 0$
 and $L = L(F_1, F_2) > 0$ such that if $p \le c n^{-1/m_k(F_1, F_2)}$ then w.h.p. every minimal closed $(F_1, F_2)$-core of $\Hknp$ has size at most $L$.
\end{lemma}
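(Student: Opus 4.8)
The plan is to argue by contradiction: suppose that with non-negligible probability $\Hknp$ contains a minimal closed $(F_1,F_2)$-core $G'$ on more than $L$ vertices, and show that such a $G'$ must contain a subgraph which is both too dense to appear in $\Hknp$ for $p \le cn^{-1/m_k(F_1,F_2)}$ and too large to be absorbed by a union-bound over constantly many isomorphism types. The key structural input is a \emph{grow sequence}: starting from a single closed edge $e_0 \in E(G')$, we build an increasing chain $e_0 = H_0 \subsetneq H_1 \subsetneq \dotsb \subsetneq H_s = G'$ (or terminating once we exceed size $L$) where at each step we attach either a copy of $F_1$ or a copy of $F_2$ that shares at least one edge with the current $H_j$ but is not contained in it. Such a step is always available while $H_j \subsetneq G'$: since $G'$ is a minimal core, it is edge-connected through copies of $F_1$ and $F_2$ (otherwise it would split into smaller cores), and every edge of $G'$ — being closed in $\hat H$ — lies in a copy of $F_1$ and an otherwise edge-disjoint copy of $F_2$.

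The heart of the argument is to control the density increment at each step, and this is exactly where hypotheses (i)--(iii) enter. When we attach a fresh copy of $F_1$ overlapping $H_j$ in a strict subgraph $S \subsetneq F_1$, strict balancedness of $F_1$ gives $\frac{e(F_1)-e(S)}{v(F_1)-v(S)} > m_k(F_1) \ge m_k(F_1,F_2)$, so the increment strictly exceeds $m_k(F_1,F_2)$; similarly for $F_2$ using that $F_2$ is strictly balanced and $m_k(F_2) \ge 1 \ge \dots$ — here one must be a little careful because $m_k(F_2) < m_k(F_1,F_2)$ in general, so a single $F_2$-attachment on its own can have increment \emph{below} $m_k(F_1,F_2)$. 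This is precisely the role of the family $\cF^*(F_1,F_2)$ and the asymmetric-balanced condition (iii): whenever we are forced to attach an $F_2$, we bundle it together with the copies of $F_1$ through which its edges are closed, and condition (iii) guarantees that the combined object — a member of $\cF^*$, minus the part already present — has density increment at least $m_k(F_1,F_2)$, with equality only in the generic degenerate case where we attach a single bare edge. Tracking these increments along the grow sequence yields, for the terminal $H_s$, a bound of the shape
\[
  e(H_s) \ge m_k(F_1,F_2)\bigl(v(H_s) - k\bigr) + 1 - o_L(1),
\]
and more importantly that any ``large'' core contains a subgraph $H$ on between $L_0$ and $L$ vertices (for suitable absolute constants) with $e(H) \ge m_k(F_1,F_2)\,v(H)$, i.e.\ $m(H) \ge m_k(F_1,F_2)$, and whose size can be taken in a bounded window by truncating the grow sequence.

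Finally I would close the argument with a first-moment computation. For a fixed hypergraph $H$ with $v(H) = j$ vertices and $e(H) \ge m_k(F_1,F_2)\, j$ edges, the expected number of copies in $\Hknp$ is at most $n^{j} p^{e(H)} \le n^{j} (c n^{-1/m_k(F_1,F_2)})^{m_k(F_1,F_2) j} = c^{m_k(F_1,F_2) j}$, which is $\le c^{\Omega(j)}$; summing over all $j$ in the bounded window $[L_0, L]$ and over the constantly many (for each fixed $j$) isomorphism types of such $H$ shows that for $c$ small enough the probability that any such $H$ appears is $o(1)$. (The truncation at size $L$ is what keeps the number of isomorphism types finite and the exponent of $c$ bounded below; one chooses $L_0$ large enough that the "$-o_L(1)$'' slack in the increment bound is absorbed, and then any $L \ge L_0$ works, with $c$ depending on the resulting window.) The main obstacle, and the place requiring genuine care rather than routine estimation, is the bookkeeping in the grow sequence when $F_2$-attachments and $F_1$-attachments interleave: one must ensure that the bundled $\cF^*$-objects used to salvage the density deficit of $F_2$-steps do not reuse vertices or edges in a way that double-counts the gain, and that the equality case of (iii) (attaching only an attachment edge, genericity) can occur at most boundedly often before the accumulated strict surplus from the other steps dominates — this is what forces the size window to be bounded and ultimately makes the union bound go through.
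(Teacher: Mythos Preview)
Your plan has the right architecture (grow sequences built from $F_1$-copies and members of $\mathcal F^*$, first moment over the resulting objects), but the density bookkeeping is off in a way that breaks the first-moment calculation.

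You claim that a truncated grow sequence $H$ satisfies $e(H)\ge m_k(F_1,F_2)\,v(H)$. This is false. Starting from a single edge ($v=k$, $e=1$) and attaching a generic $F^*\in\mathcal F^*$ along its attachment edge adds $v(F^*)-k$ vertices and $e(F^*)-1$ edges with ratio exactly $m_k(F_1,F_2)$; the correct invariant is only $e(H)\ge m_k(F_1,F_2)(v(H)-k)+1$. Plugging that into your first moment gives
\[
n^{v(H)}p^{e(H)}\le c^{\,\Theta(v(H))}\,n^{\,k-1/m_k(F_1,F_2)},
\]
and since $m_k(F_1,F_2)\ge m_k(F_2)\ge 1$ the exponent $k-1/m_k(F_1,F_2)\ge k-1>0$ is a fixed positive power of $n$. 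No choice of $c$ and no bounded window $[L_0,L]$ can kill this polynomial factor: regular (generic) $\mathcal F^*$-steps contribute only a constant $c<1$ each, while the ``seed'' costs $n^{k-1/m_k(F_1,F_2)}$. Your remark that the equality case of (iii) ``can occur at most boundedly often'' is exactly the point that fails---there is nothing preventing arbitrarily long runs of regular steps on density grounds alone.

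The paper handles this by a dichotomy that your proposal does not supply. It classifies steps as \emph{regular} (generic $\mathcal F^*$ attached on one edge: contribution $\le c$) versus \emph{degenerate} (any $F_1$-step, or an $\mathcal F^*$-step with larger overlap: contribution $\le C\,i^{O(1)}n^{-\alpha}$ for a fixed $\alpha>0$). The crucial combinatorial lemma---absent from your plan---shows that a grow sequence with at most $d$ degenerate steps has total length $O(d)$, because each regular step creates many \emph{open} edges that must subsequently be closed, and only degenerate steps can close them faster than new regular steps open them. This forces: either the sequence has $d_{\max}:=\lceil v(F_1)/\alpha\rceil+1$ degenerate steps, whose combined $n^{-\alpha d_{\max}}$ kills the $n^{v(F_1)}$ seed; or it has fewer than $d_{\max}$ degenerate steps but then, if longer than the constant $L=O(d_{\max})$, it must run to length $\Theta(\log n)$ in regular steps, whose combined $c^{\Theta(\log n)}$ kills the polynomial. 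Either way the expected number of long sequences is $o(1)$. You need this open-edge counting (the analogue of ``fully-open steps'' and the inequality $\mathrm{fo}(S_i)\ge C_1\,\mathrm{reg}(S_i)-C_2\,\mathrm{deg}(S_i)$) to make the argument close; the pure density accounting you outline cannot.
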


%The key result of the section is that we prove that these minimal closed $(F_1, F_2)$-cores are either of a small fixed size depending only on $F_1$ and $F_2$, or that with high probability they do not appear in $\Hknp$ for $p$ as in the 0-statement.
 % We state the lemma here, but the proof is delayed for later.
% \begin{lemma}
% \label{lemma:boundedcore}
% Let $F_1$ and $F_2$ be as in Theorem \ref{thm:asymmetric-0}. There exist constants $c = c(F_1, F_2) > 0$
% and $L = L(F_1, F_2) > 0$ such that if $p \le c n^{-1/m_k(F_1, F_2)}$ then w.h.p. every minimal closed $(F_1, F_2)$-core of $\Hknp$ has size at most $L$.
% \end{lemma}
%The lemma is proven using the first moment argument. We enumerate all possible minimal closed $(F_1, F_2)$-cores  of size more than some constant $L$ and show that the probability that one or more of them appear in $\Hknp$ is $o(1)$. In the remainder of the section, we argue about this in more details.

Before we go into the proof of the lemma, we first use it to derive Theorem \ref{thm:asymmetric-0}.

\begin{proof}[Proof of Theorem \ref{thm:asymmetric-0}]
Let $c$ and $L$ be as given by Lemma \ref{lemma:boundedcore}. First, note that w.h.p.\ every hypergraph $G \subset \Hknp$ of size at most $L$ satisfies $m(G) \le m_k(F_1,F_2)$. This can be derived as follows. For every hypergraph $G$ with at most $L$ vertices and $m(G) > m_k(F_1, F_2)$ we have
$$
  1/m_k(F_1, F_2) \ge 1/m(G) + \alpha,
$$
for some $\alpha > 0$. Moreover, there exists $\alpha = \alpha(L)$ such that this holds for all such hypergraphs $G$ simultaneously. Consider some $G$ with $m(G) > m_k(F_1, F_2)$ and let $G' \subseteq G$ be such that $m(G) = e(G')/v(G')$. Note that the expected number of copies of $G'$ in $\Hknp$ is at most 
$$
  n^{v(G')} p^{e(G')} \le n^{v(G')} \left( cn^{-1/m_k(F_1,F_2)} \right)^{e(G')} \le n^{v(G')} n^{-v(G') - \alpha} = n^{-\alpha}.
$$
Therefore, by Markov's inequality we have that $G'$, and therefore $G$, does not appear in $\Hknp$ with probability at least $n^{-\alpha/2}$. Since there are at most $L 2^{L^2}$ such hypergraphs and $L$ does not depend on $n$, by union bound we have that w.h.p none of them appears.

Next, let $\hat H$ be the hypergraph obtained using the Algorithm \ref{algo:col} and let $\hat H_1, \ldots, \hat H_t$ be a partition of $\hat H$ into edge-disjoint minimal closed $(F_1, F_2)$-cores. By Lemma \ref{lemma:boundedcore} we have that each $\hat H_i$ has size at most $L$, and from the previous observation we conclude that $m(\hat H_i) \le m_k(F_1, F_2)$. Now using the property (iv) we obtain a colouring of $\hat H_i$ without a monochromatic $F_1$ and $F_2$ which, by the discussion preceding the proof, gives a valid colouring of $\hat H$. This, in turn, gives a valid colouring of the whole $\Hknp$ which finishes the proof.
\end{proof}

In the rest of this section we prove Lemma \ref{lemma:boundedcore}. The proof of the lemma will rely on a somewhat technical claim which we postpone to the next section.

\begin{proof}[Proof of Lemma \ref{lemma:boundedcore}]
Our strategy is to show that every minimal closed $(F_1, F_2)$-core is either of size at most $L$ or is much larger, namely of size $\Omega(\log n)$. Then, using some further properties of such hypergraphs, we deduce that the latter case does not happen in $\Hknp$. Our main tool in proving this is a procedure (Algorithm \ref{algo:grow-sequence-algo-asymm}) which generates each $(F_1, F_2)$-core in a systematic way.

Let $G$ be some minimal closed $(F_1, F_2)$-core.
 We assume that some arbitrary total ordering on the vertices of $G$ is given. By lexicographic ordering this induces a total ordering on the edges of $G$ as well, i.e. we can always choose a well-defined minimal edge out of any edge set. 
 For the enumeration aspect of the problem we map any minimal closed $(F_1, F_2)$-core $G$ to a sequence of hypergraphs $F^*_i$'s  via Algorithm \ref{algo:grow-sequence-algo-asymm}, where $F^*_i$ is either a member of $\cF^*(F_1, F_2)$ or is isomorphic to $F_1$.
% The definition of a grow sequence cannot be taken over unchanged from
% \cite{nenadov2015algorithmic}, as we need to consider two different
% graphs simultaneously. This motivates the following definition.%Note that \(v(C^*) \leq 58\) and \(e(C^*) \leq 77\).
% We define our grow sequence according to the algorithm \ref{algo:grow-sequence-algo-asymm}, where \(G\)
% is a minimal closed subgraph of \(\Hknp\).
We denote the family $\cF^*(F_1, F_2)$ by $\cF^*$, as it will always be used with respect to $F_1$ 
and $F_2$.

{\LinesNumbered
\begin{algorithm}
  \DontPrintSemicolon
  Let \(F^*_1\) be a copy of \(F_1\) in \(G\)\;
  \(G_1 \leftarrow F^*_1\)\;
  \(i \leftarrow 1\)\;
  \While{\(G_i \neq G\)}{
    \(i \leftarrow i + 1\)\;
    \eIf{\(G_{i-1}\) contains an open edge}{
      \(j \leftarrow\) smallest index \(j < i\) such that \(F^*_j\) contains
      open edges\;
      \(e \leftarrow\) the minimal open edge in \(F^*_j\)\;
      \eIf{there exists a copy \(\hat F_1\) of \(F_1\) in \(G\), not contained in \(G_{i-1}\), which
      contains $e$  \label{line:copy-of-F_1-open} }
      {\(F^*_i \leftarrow \hat F_1\)  \;
      }
      {
      \(F^*_i \leftarrow\) a copy of some hypergraph from \(\mathcal{F}^*\)
      in \(G\) but not in \(G_{i-1}\), which contains \(e\) as an attachment edge\label{line:copy-of-f-star-open}\;
      }
    }{ 
    
    \eIf{there exists a copy \(\hat F_1\) of \(F_1\) in \(G\), not contained in \(G_{i-1}\), which intersects \(G_{i-1}\) in at least one edge  \label{line:copy-of-F_1-closed} }
    { \(F^*_i \leftarrow \hat F_1\)  \; }
    {
         \(F^*_i \leftarrow \)  a copy \(F^*\) of a hypergraph in \(\mathcal{F}^*\) in
              \(G\), not contained in \(G_{i-1}\), such that its attachment edge is contained in $G_{i-1}$
                     \label{line:copy-of-f-star-closed} \;
                     }
    }
    \(G_{i} \leftarrow G_{i-1} \cup F^*_i\)
  }\caption{Decomposing minimal closed $(F_1, F_2)$-cores.}\label{algo:grow-sequence-algo-asymm}
\end{algorithm}

Let us first prove that Algorithm \ref{algo:grow-sequence-algo-asymm} is well defined.
If $G_{i-1}$ has an open edge $e$, then either there is a copy of $F_1$  or $F_2$ not contained in $G_{i-1}$, but which contains $e$. Moreover, note that every copy of $F_2$ is contained in some $F^* \in \cF(F_1, F_2)$, since each edge of $G$ is closed. These observations prove that if $G_{i-1}$ has an open edge, then one of the two requirements of the lines \ref{line:copy-of-F_1-open} and \ref{line:copy-of-f-star-open} are satisfied.

Similarly, if $G_{i-1}$ doesn't have an open edge and $G_{i-1} \neq G$ then there is a copy of $F_1$ or $F_2$ which intersects $G_{i-1}$, but which
is not contained in it. By the previous argument, one of the two requirements of the lines \ref{line:copy-of-F_1-closed} and \ref{line:copy-of-f-star-closed} are satisfied.
 The algorithm is therefore correct and
terminates. 

Note that the sequence $S:= (F^*_1, \ldots, F^*_\ell)$ fully describes a run of the algorithm. We call it a \emph{grow sequence} for $G$ and each $F^*_i$ in it a \emph{step} of the sequence, where $1 \le i \le \ell$. Set $S_i = (F^*_1, \ldots, F^*_i)$ to be the grow sequence consisting of the first $i$ steps. 
 With this we turned the problem of enumerating all minimal closed $(F_1, F_2)$-cores  into the one of enumerating all grow sequences which may appear as output of Algorithm \ref{algo:grow-sequence-algo-asymm}. We aim to estimate the expected number of grow sequences. As already hinted in the beginning of the proof, we show that the algorithm either terminates with a grow sequence of size $O(1)$, or it has to produce a sequence $S$ of size $\Omega(\log n)$. However, in the latter case we show that a subsequence of $S$ truncated after the first $\Theta(\log n)$ steps  does not appear in $\Hknp$, which in turn implies that $S$ does not appear either. We now make this precise.

We distinguish various step types. \(F^*_1\) is the first step. 
Steps chosen in lines \ref{line:copy-of-f-star-open} and \ref{line:copy-of-f-star-closed} are called \emph{regular} if $e(F^*_i) - e(H_i))/(v(F^*_i) - v(H_i)) = m_k(F_1, F_2)$, where $H_i := F^*_i \cap
G_{i-1}$, otherwise they are called \emph{degenerate}. Furthermore, regular step is \emph{open} or \emph{closed}, depending whether it was chosen in line \ref{line:copy-of-f-star-open} or \ref{line:copy-of-f-star-closed}.
 The steps chosen in lines~\ref{line:copy-of-F_1-open} and \ref{line:copy-of-F_1-closed} are by definition
 always degenerate. We say $F^*_1$ is degenerate by definition.

Consider any open regular step \(F^*_i\). By asymmetric-balancedness of $\mathcal{F}^*$, its intersection \(H_i\) with \(G_{i-1}\) is exactly one edge and $F^*_i$ is generic, and thus we can bound the contribution of each such step to the expected number of sequences by
\begin{equation}
k! \cdot n^{v(F^*_i)-k}p^{e(F^*_i)-1}.
\end{equation}
By using $p \le c n^{-1 / m_k(F_1, F_2)}$ and  assumption $(iii)$ from Theorem \ref{thm:asymmetric-0}  we obtain
\begin{align*}
 n^{v(F^*_i)-k} p^{e(F_i^*) - 1} & \le c^{e(F_i^*) - 1} 
n^{v(F^*_i)-k - (e(F_i^*) -1)/m_k(F_1, F_2) }  \le
 c^{e(F_i^*) - 1} .
\end{align*}
From previous it follows
\begin{equation}\label{eq:regular-open-asymm}
   k! \cdot n^{v(F^*_i)-k}p^{e(F^*_i)-1} \leq 
k ! \cdot c^{e(F^*_i)-1} \leq 
  c,
\end{equation}
where the last inequality holds as we may choose constant $c$
small enough, and such that \(\log(c) < -1\) holds.

At every step, we add some new vertices, but never more than the number of vertices in some generic $F^{\flower}$. That means that after $i-1$ steps there are at most $v(F^\flower) \cdot (i-1)$ vertices in $G_{i-1}$.
Hence, if \(F_i^*\) is a  closed regular step, we can bound the contribution to the expected number of sequences by
\begin{equation}\label{eq:regular-closed-asymm}
  k! (v(F^\flower) \cdot (i-1))^{k} n^{v(F^*_i)-k} p^{e(F^*_i) - 1} 
  \leq c  (v(F^\flower) \cdot i)^{k}.
\end{equation}

Now consider the case of a degenerate step \(F^*_i\) which is a copy of some
\(F^*\in \mathcal{F}^*\). By asymmetric-balancedness condition we can
choose a constant \(\alpha_1 > 0\) such that regardless of the choice of \(F^*
\in \mathcal{F}^*\) we have (recall that $H_i = F^*_i \cap
G_{i-1}$)
\begin{equation*}
  v(F^*) - v(H_i) - \frac{e(F^*) - e(H_i)}{m_k(F_1, F_2)} < -\alpha_1.
\end{equation*}
In the case of a degenerate step consisting of just a copy of \(F_1\) (i.e.\
one as in line~\ref{line:copy-of-F_1-open} or \ref{line:copy-of-F_1-closed}) we can choose \(\alpha_2 > 0\) such that
for all \(H_i \subsetneq F_1\), \(e(H_i) \geq 1\), we have
\begin{equation*}
  v(F_1) - v(H_i) - \frac{e(F_1) - e(H_i)}{m_k(F_1, F_2)} < -\alpha_2,
\end{equation*}
by the fact that $F_1$ is strictly balanced with respect to $m_k(\cdot, F_2)$.
Note that this holds even for \(H_i\) being exactly one edge, as \(m_k(F_1 ) >
m_k(F_1, F_2)\). We then set \(\alpha = \min\{\alpha_1, \alpha_2\}\).

With this we obtain an upper bound for the contribution of a degenerate step \(F^*_i\),
\begin{equation}\label{eq:degenerate-asymm}
  \abs{\cF^*} \cdot v(F^*_i)^{v(H_i)}(i\cdot v(F^\flower))^{v(H_i)} \cdot n^{v(F^*_i) - v(H_i)} p^{e(F^*_i) - e(H_i)} \leq
  C i^{v(F^\flower)} n^{-\alpha},
\end{equation}
where $C$ is a suitably chosen constant.

To finish the argument we use the following claim whose proof is presented in the next section.

\begin{claim}\label{lem:open-edges-bound-asymm}
There exist positive constants $C_1, C_2$ (depending on $F_1$ and $F_2$) such that the following holds.
Let \(S\) be a grow sequence of length \(t\).
  \begin{enumerate}[i)]
  \item \label{item:lem-open-edges-bound-1-asymm}If \(S\) contains at most \(d\)
    degenerate steps, then \(t \le d ( 1 + C_2 / C_1) \)
  \item \label{item:lem-open-edges-bound-2-asymm}If a prefix \(S_i\) of \(S\) contains at most \(d\) degenerate steps, then \(S_i\)
    contains no closed regular steps \(F_j\) with \(j > d ( 1 + C_2 / C_1) + 1\).
  \end{enumerate}
\end{claim}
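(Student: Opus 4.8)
The plan is to set up a potential (weight) function on the grow sequence that tracks the number of open edges available for future steps, and to argue that regular steps make steady progress towards exhausting this potential while only degenerate steps can replenish it. Concretely, for a grow sequence $S_i = (F^*_1, \ldots, F^*_i)$ I would let $o_i$ denote the number of open edges of $G_i$ that lie in some $F^*_j$ with $j \le i$ and have not yet been ``used'' as the attachment edge of a later step (equivalently, the open edges that the algorithm still has on its agenda). The first step $F^*_1 \cong F_1$ contributes at most $e(F_1)$ open edges, so $o_1 \le C_2$ for $C_2 := e(F_1)$. I would then examine how $o_i$ changes at step $i$: a closed regular step chosen in line \ref{line:copy-of-f-star-closed} attaches along an edge already inside $G_{i-1}$, so it does not consume an open edge from the agenda, but — crucially — because the step is \emph{generic} and \emph{regular} (intersection exactly one edge, by asymmetric-balancedness), the newly added copy of $F^*$ contributes only boundedly many edges, hence at most $C_1' := \max_{F^* \in \cF^*} e(F^*)$ new edges total, and in particular at most that many new open edges. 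By contrast, an \emph{open} regular step removes one open edge from the agenda (the attachment edge $e$ in line \ref{line:copy-of-f-star-open} becomes closed once $F^*_i$ is added), while again adding at most $C_1'$ new edges; however, here I want the key point that an open regular step must \emph{strictly decrease} the agenda. This is where genericity enters: an open regular step has intersection exactly one edge with $G_{i-1}$, so the only open edge of $G_{i-1}$ it touches is $e$ itself, and after adding $F^*_i$ this edge $e$ is contained in both an $F_1$-copy and an $F_2$-copy that are otherwise edge-disjoint (by the structure of $\cF^*$), hence becomes closed. Thus each open regular step decreases $o_i$ by $1$ and increases it by at most some constant $C_1$ coming from genuinely new open edges, but — and here is the subtlety — I must ensure the net effect is a decrease, which requires a more careful accounting: I will instead define the potential so that open edges created by regular steps are bounded by a \emph{fixed total budget} rather than accumulating, by noting that a regular step's freshly added vertices can only later host open edges that get processed, and feeding this back via part (i).

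The cleanest route is probably an amortized argument: assign weight to the sequence via $\Phi(S_i) = o_i$ and show $\Phi(S_i) \le \Phi(S_{i-1}) + C_2 \cdot [\text{step $i$ degenerate}] - 1 \cdot [\text{step $i$ regular}] + C_1 \cdot [\text{step $i$ regular}]$, but to get a genuine decrease from regular steps I would in fact track the open edges of the \emph{most recently added} generic piece separately, since by asymmetric-balancedness a generic $F^*$ glued on a single edge $e$ can contribute at most $e(F^*) - 1$ edges other than $e$, and these are the only candidates to become ``agenda'' open edges from that step. So set $C_1 := \max_{F^* \in \cF^*} (e(F^*) - 1) \ge 1$. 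Then: each regular step adds at most $C_1$ open edges to the agenda and removes at least one (in the open case) or zero (closed case, but then part (ii) is what we want). For part (i): if $S$ has length $t$ with at most $d$ degenerate steps, then the agenda starts at $\le C_2$, gains at most $C_2$ per degenerate step and net at most $C_1 - 1 \le C_1$ per regular step, but since the agenda must stay nonnegative and each open regular step is ``paid for'' by agenda, a standard ballot-style inequality gives $(\text{\# open regular steps}) \le C_2(d+1) + \ldots$; dividing the edge/vertex-counting another way, the regular steps each add a fixed positive number of vertices while the degenerate steps and the bound $|G_i| = O(\log n)$ (obtained separately from the expectation computations \eqref{eq:regular-open-asymm}--\eqref{eq:degenerate-asymm}) constrain $i$. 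Combining, $t \le d(1 + C_2/C_1)$. For part (ii): a closed regular step $F^*_j$ is only chosen when $G_{j-1}$ has \emph{no} open edge (the \textbf{else} branch of the outer \textbf{if}); by part (i) applied to the prefix $S_{j-1}$ (which has $\le d$ degenerate steps), once $j - 1 > d(1 + C_2/C_1)$ the sequence must already have terminated or transitioned, so no such $F^*_j$ with $j > d(1 + C_2/C_1) + 1$ exists.

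The main obstacle I anticipate is making the ``regular steps strictly decrease the agenda'' bookkeeping airtight: a priori a single regular step both consumes one open edge and may expose several new ones, so the naive potential does not decrease. The resolution must exploit condition (2) of asymmetric-balancedness — that equality in the density ratio forces $H$ to be exactly an attachment edge and $F^*$ to be generic — to control precisely \emph{which} edges of a newly glued $F^*$ can ever serve as future attachment edges (only those in an $F_2$-copy covered by further $F_1$-copies), and to bound their number by a constant $C_1$ independent of the step. Getting the right definition of ``open edge on the agenda'' so that this constant is genuinely bounded, and so that the termination/transition argument in part (ii) follows formally from part (i), is the delicate point; the rest is the routine counting already prepared in \eqref{eq:regular-open-asymm}--\eqref{eq:degenerate-asymm}.
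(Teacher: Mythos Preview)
Your potential argument has a genuine gap that you yourself flag but never close. An open regular step consumes one open edge (its attachment edge $e$) but, because the step is generic, it exposes $(e(F_2)-1)(e(F_1)-1)$ fresh open edges inside the newly attached $F^*$ (all non-cycle edges of all petals). Hence your quantity $o_i$ \emph{increases} by a fixed positive amount at every regular step, and no choice of $C_1$ makes an inequality of the form $o_i \le o_{i-1}-1+C_1$ usable. Your suggested repair (``track the open edges of the most recently added generic piece separately'') is never turned into an actual argument, and your fallback -- invoking ``the bound $|G_i|=O(\log n)$ obtained separately from the expectation computations'' -- is circular: that bound is precisely what Lemma~\ref{lemma:boundedcore} derives \emph{from} the present claim.

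The paper's proof replaces your edge-level potential by a coarser step-level one. It tracks the number $\fo(S_i)$ of earlier regular steps $F^*_j$ that are still \emph{fully open} in $S_i$, meaning no later step has touched their inner vertices. Each regular step is itself fully open when created and can destroy at most one earlier fully-open step (the unique one whose petals contain its attachment edge). The crucial amortization (Claim~\ref{claim:seq_regular}) is that once a regular step destroys a fully-open $F^*_j$, the algorithm must spend the next $e(F_2)-2$ regular steps closing the remaining open petals of that same $F^*_j$, and these steps destroy nothing. This yields $\fo(S_i)\ge C_1\cdot\reg(S_i)-C_2\cdot\deg(S_i)$ with $C_1=1-\tfrac{1}{e(F_2)-2}>0$ (here is where the hypothesis $e(F_2)\ge 3$ enters). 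Since $\fo(S_i)=0$ whenever $G_i$ has no open edge -- which happens both at termination and immediately before any closed regular step -- both parts of the claim follow at once; your reduction of part~(ii) to part~(i) via the ``no open edge'' branch is correct and matches the paper. What your approach is missing is this inversion of sign: the right potential \emph{grows} under regular steps, and the constraint comes from it being forced to zero at the relevant moments.
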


Let $C_1, C_2$ be as in the claim above. 
We can now finish our first moment argument. Set \(\dmax := v(F_1)/\alpha + 1\)
and \(L = (1 + C_2 / C_1)\cdot\dmax +1\). By
Claim~\ref{lem:open-edges-bound-asymm} all sequences longer than \(L\) must
contain at least \(\dmax\) degenerate steps. Set \(\lmax := v(F_1) \log(n) +
d_{\text{max}}+1\).  We consider two cases: the first are those sequences which
have their \(\dmax\)th degenerate step before \(\lmax\). We truncate them
after the \(\dmax\)th step. The second are those sequences whose \(\dmax\)th
step appears after \(\lmax\). We truncate these at length \(\lmax\). Note that
by Claim~\ref{lem:open-edges-bound-asymm} in both of these cases closed regular
steps can only happen in the first \(L\) steps of the sequence.

% c  - regular open
 % c  (v(F^\flower) \cdot i)^{k} regular closed
 % C i^{v(F^\flower)} n^{-\alpha} degenerate 
 
Let us now analyze the first case when  \(\dmax\)th degenerate step happens before \(\lmax\).  Using \eqref{eq:regular-closed-asymm} and \eqref{eq:degenerate-asymm} and summing over all possible steps $t$ when \(\dmax\)th degenerate step happened we bound the expected number of such grow sequences
\begin{multline*}
  \sum_{t = L+1}^{\lmax-1} n^{v(F_1)}
  \binoms{t}{\dmax}\bigl(C t^{v(F^\flower)} n^{-\alpha}\bigr)^{\dmax}
   (v(F^\flower) L)^{kL} 
  =\\ = \mathcal{O}(\polylog(n)\cdot n^{v(F_1)} n^{-\alpha\cdot\dmax}) = 
  \mathcal{O}(\polylog(n)\cdot n^{v(F_1) -v(F_1) -\alpha}) = o(1).
\end{multline*}
Here we bound the contribution of the first step by \( n^{v(F_1)}\), drop the
contribution of \(c < 1\) for all regular steps, and use the fact that only
the first \(L\) steps may be closed regular. 

In the second case, by summing over the number of degenerate steps $d$ before step \(\lmax\), we obtain 
\begin{multline*}
  \sum_{d = 0}^{\dmax}n^{v(F_1)}\binoms{\lmax}{d}
  \bigl(C \lmax^{v(F^\flower)} n^{-\alpha}\bigr)^{d}
   (v(F^\flower) L)^{kL}  c^{\lmax-d-1} 
  \\= \mathcal{O}(\polylog(n)\cdot n^{v(F_1)} c^{\lmax-1-d}) =
  \mathcal{O}(\polylog(n)\cdot n^{v(F_1)(1+\log c)}) = o(1),
\end{multline*}
where the last step holds because $\log c < -1$.
With this we proved that only grow sequences of length at most the constant
\(L\) can appear in \(\Hknp\) for \(p \leq c n^{1/m_k(F_1, F_2)}\).}
\end{proof}

% Let $c = c(F_1, F_2)$ and $L = L(F_1, F_2)$ be as in the lemma above.
%Having this, the proof of Theorem \ref{thm:asymmetric-0} is almost complete. It remains to prove Lemma \ref{lem:open-edges-bound-asymm}, which we do next.

It remains to prove Claim \ref{lem:open-edges-bound-asymm}, which we do in the next section.

\subsection{Bounding grow sequences - Proof of Claim \ref{lem:open-edges-bound-asymm} }

Let us first prove an auxiliary  claim.
\begin{claim}
\label{claim:regular-has-open-edges}
Let $G$ be an arbitrary hypergraph and let $F_1, F_2$ be as in Theorem \ref{thm:asymmetric-0}. Furthermore, let $F$ be an $F_1$-copy which 
intersects $G$ in exactly one edge $e$ in $E(G)$. 
Set $G_F$ to be a hypergraph $(V(G) \cup V(F), E(G) \cup E(F))$.
Then every edge in $E(F) \setminus e$ is open in $G_F$.
\end{claim}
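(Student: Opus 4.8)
The plan is to argue by contradiction. Suppose some $f \in E(F) \setminus \{e\}$ is \emph{not} open in $G_F$, so there are copies $A \cong F_1$ and $B \cong F_2$ inside $G_F$ with $E(A) \cap E(B) = \{f\}$. Since $f \neq e$ and both are $k$-sets, $f$ contains a vertex $x \in N := V(F) \setminus V(e)$; the key elementary observation is that every edge of $G_F$ through a vertex of $N$ lies in $E(F)$ (such a vertex never appears in an edge of $G$). Applied to $x$, this will be used repeatedly.

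I would first record two structural facts. Using that $F_1$ is strictly balanced with respect to $m_k(\cdot, F_2)$ together with $m_k(F_2) \geq 1$, a short computation (a single edge contributes $m_k(F_2)$ to the maximum in \eqref{eq:def-of-mk-asymm}) rules out $e(F_1) \leq 2$, so $e(F_1) \geq 3$. Next, I claim that a strictly $k$-balanced $k$-uniform hypergraph $H$ with $e(H) \geq 3$ and $m_k(H) \geq 1$ has minimum degree at least $2$: if some edge $g$ had $j \geq 1$ vertices of degree $1$, then deleting $g$ together with those $j$ vertices yields a proper subgraph, and comparing its $k$-density with $m_k(H) = (e(H)-1)/(v(H)-k)$ gives $(e(H)-1)j < v(H)-k \leq e(H)-1$, hence $j < 1$, a contradiction. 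Applying this to $F_1$ and to $F_2$ (which has at least three edges by hypothesis) shows that both have minimum degree at least $2$.

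The core of the argument is to show $A \subseteq F$, which then forces $A = F$ since $e(A) = e(F_1) = e(F)$. Suppose not; then $A$ contains an edge of $E(G)\setminus\{e\}$. Let $A_F$ and $A_G$ be the subgraphs of $A$ spanned by $E(A)\cap E(F)$ and $E(A)\cap E(G)$; then $E(A_F)\cup E(A_G)=E(A)$, $E(A_F)\cap E(A_G)\subseteq\{e\}$, and $V(A_F)\cap V(A_G)\subseteq V(e)$. The minimum-degree bound (via the observation above applied to $x$, and its analogue for a degree-$1$ vertex in $V(G)\setminus V(e)$) shows $e(A_F),e(A_G)\geq 2$, so $A_F$ and $A_G$ are proper subgraphs of $A\cong F_1$ and hence $d_k(A_F),d_k(A_G) < m_k(F_1)$. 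I would then split into three cases and sum these two strict inequalities in each. If $e\in E(A)$, then $V(A_F)\cap V(A_G)=V(e)$ has size $k$ and $E(A_F)\cap E(A_G)=\{e\}$, and the sum collapses to $e(F_1)-1 < e(F_1)-1$. If $e\notin E(A)$ and $|V(A_F)\cap V(A_G)|\leq k-1$, the sum yields $m_k(F_1)\bigl(k-|V(A_F)\cap V(A_G)|\bigr)<1$, contradicting $m_k(F_1)\geq m_k(F_2)\geq 1$. If $e\notin E(A)$ but $V(A_F)\cap V(A_G)=V(e)$, I would instead use the subgraph $A_F+e$ of $F$: one checks it is a \emph{proper} subgraph of $F$ (otherwise $V(A_F)=V(A)$, which forces $A_G$ to be a single edge), so $d_k(A_F+e)<m_k(F_1)$, and summing this with $d_k(A_G)<m_k(F_1)$ again collapses to $e(F_1)-1 < e(F_1)-1$. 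In every case we get a contradiction, so $A=F$. Then $E(F)\cap E(B)=E(A)\cap E(B)=\{f\}$, while every edge of $B$ through $x$ lies in $E(F)$, so $\deg_B(x)=1$ — contradicting that $F_2$ has minimum degree at least $2$. Hence $f$ is open in $G_F$.

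The main obstacle is the last of the three cases: a plain density count does not forbid splitting $F_1$ across a $k$-vertex separator that carries no edge of $F_1$, and the remedy is precisely to exploit the edge $e$ — which lies on that separator inside $F$ but need not lie in $A$ — so this is exactly the point where we genuinely use that $F$ meets $G$ in an \emph{edge} and not merely in $k$ vertices.
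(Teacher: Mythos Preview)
Your proof is correct and follows essentially the same strategy as the paper's: split a hypothetical offending copy of $F_1$ (resp.\ $F_2$) into its ``$F$-side'' and ``$G$-side'' pieces and use strict $k$-balancedness to derive a contradiction. The only differences are organizational --- you first isolate the facts $e(F_1)\ge 3$ and $\delta(F_1),\delta(F_2)\ge 2$ and then run a three-case analysis, whereas the paper collapses your cases by always adding $e$ to the $F$-side piece (your Case~3 trick applied uniformly) and handles the $F_2$-copy by a direct density computation equivalent to your min-degree argument.
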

\begin{proof}
Let $e^*$ be an arbitrary edge from $E(F) \setminus e$.
We first prove that any $F_1$-copy that contains $e^*$, must contain all edges from $E(F) \setminus e$. Note that this is equivalent to proving that all $F_1$-copies that contain $e^*$ have exactly one edge intersecting $e(G)$.
In order to arrive at a contradiction let $F'$ be an $F_1$-copy such that $|E(F') \cap E(G)| \ge 2$. Set $F_{\text{new}} = F'[(V(F) \setminus V(G))\cup e]$ and $F_{\text{old}} = F'[V(G)]$. Furthermore, let us denote $F_{\text{new}}^{+e}$ as the hypergraph obtained by adding edge $e$ to $F_{\text{new}}$ (if the edge is already in $F_{\text{new}}$ then 
$F_{\text{new}} = F_{\text{new}}^{+e}$). Note that both  $F_{\text{new}}^{+e}$ and 
$F_{\text{old}}$ are strict subgraphs of $F_1$. Since 
$e(F') = e(F_{\text{new}}^{+e}) + e(F_{\text{old}}) - 1$ and 
$v(F') \ge v(F_{\text{new}}^{+e}) + v(F_{\text{old}}) - k$  we obtain
$$
m_{\ell}(F') = \frac{e(F') - 1}{v(F') - k} \le 
\frac{e(F_{\text{new}}^{+e}) -1 + e(F_{\text{old}}) - 1}
{v(F_{\text{new}}^{+e}) -k + v(F_{\text{old}}) - k}.
$$
Since $F'$ is strictly balanced we know that $m_{k}(F_{\text{new}}^{+e})$ and 
$m_{k}(F_{\text{old}})$ are strictly smaller than $m_{k}(F')$. Together with inequality above, this  implies 
$$
m_{k}(F') \le 
\frac{e(F_{\text{new}}^{+e}) -1 + e(F_{\text{old}}) - 1}
{v(F_{\text{new}}^{+e}) -k + v(F_{\text{old}}) - k} < m_{k}(F') ,
$$
which is a contradiction.

In order to prove the lemma, it is sufficient to prove that any $F_2$-copy that contains $e^*$ must intersect $E(F) \setminus e$ on at least one edge other than $e^*$. Assume $H$ is an $F_2$-copy which contains $e^*$ but otherwise is fully contained in $G$. Let us denote $H_{\text{old}}$ be $H[V(G)]$. 
Since $H\cong F_2$ is strictly $m_k$-balanced, we have
 $m_{k}(H) > \frac{e(H_{\text{old}}) - 1}{v(H_{\text{old}}) - k }$. Therefore we obtain with  $m_{k}(H) \ge 1$ that 
$$
m_k(H) > \frac{e(H_{\text{old}}) - 1 + 1}{v(H_{\text{old}}) - k + 1}.
$$
This is, however, a contradiction as $m_{k}(H) = \frac{e(H) - 1}{v(H) - k }=\frac{e(H_{\text{old}}) }{v(H_{\text{old}})+(v(H)-v(H_{\text{old}})) - k}$ and $H$ contains at least one vertex more than $H_{\text{old}}$. We conclude that any $F_1$-copy and $F_2$-copy that contain $e^*$ intersect on at least two edges and thus $e^*$ is closed.
\end{proof}

Before we continue with the proof, we introduce some definitions.
For any regular step $F^*_i$, we call the edge $e = E(G_{i-1}) \cap E(F^*_i)$ the \emph{attachment edge} of $F^*_i$ and 
the vertices in $V(F^*_i) \setminus V(G_{i-1})$ the \emph{inner vertices} of $F^*_i$.
If $F^*_i$ is a regular step, then $F^*_i$ contains \(
 (e(F_2) - 1) (e(F_1) - 1) \) open edges in $G_i$ (i.e.\ every edge in a petal of $F^*_i$, not contained in the center of $F^*_i$). In fact,
we call a step \(F_j^*\), \(j \leq i\), \emph{fully
  open} in \(S_i\) if it is a regular step   and
no other step \(F_{j'}^*\), \(j < j'\leq i\), intersects any vertex of
\(F_j^*\) which is not in the attachment edge of \(F_j^*\). 
Note that by Claim \ref{claim:regular-has-open-edges} a fully open step $F^*_j$ in $S_i$ contains $(e(F_2) - 1) (e(F_1) - 1)$ open edges in $G_i$.
Finally, we denote by $\reg(S_i)$, $\deg(S_i)$ and $\fo(S_i)$ the number of regular, degenerate
and fully-open steps in $S_i$. The next claim states that a fully open step $F^*_i$ stays fully open if none of the following steps intersects its inner vertices.

\begin{claim}
Let $S = (F^*_1, \ldots, F^*_l)$ be a grow sequence and $F^*_j, j \leq i$ a regular step, for some $i \leq l$. 
Then if no step $F^*_{j'}$, for $j' \in \{ j + 1, \ldots, i\}$, contains an inner vertex of $F^*_j$, then 
$F^*_j$ is fully open in $S_i$.
\label{claim:open-stays-open}
\end{claim}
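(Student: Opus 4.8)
The plan is to prove Claim~\ref{claim:open-stays-open} by unwinding the definition of ``fully open'' and reducing it to the hypothesis about inner vertices. Recall that a regular step $F^*_j$ is fully open in $S_i$ precisely when no later step $F^*_{j'}$ with $j < j' \le i$ touches a vertex of $F^*_j$ outside the attachment edge of $F^*_j$. The vertices of $F^*_j$ split into two groups: the $k$ vertices of the attachment edge $e = E(G_{j-1})\cap E(F^*_j)$, and the remaining vertices, which are by definition exactly the \emph{inner vertices} of $F^*_j$ (they lie in $V(F^*_j)\setminus V(G_{j-1})$, since a regular step with $F^*_j$ generic intersects $G_{j-1}$ in precisely the edge $e$). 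So the set of vertices of $F^*_j$ not in its attachment edge is \emph{equal} to the set of inner vertices of $F^*_j$.

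With that identification in hand, the proof is essentially a one-line deduction: the hypothesis says no step $F^*_{j'}$ with $j < j' \le i$ contains an inner vertex of $F^*_j$; by the identification above, this is exactly the statement that no such step intersects a vertex of $F^*_j$ outside its attachment edge; and that is by definition the statement that $F^*_j$ is fully open in $S_i$. First I would state that $F^*_j$ being a regular step means (by asymmetric-balancedness of $\mathcal F^*$, invoked earlier in the proof of Lemma~\ref{lemma:boundedcore}) that $H_j := F^*_j\cap G_{j-1}$ is a single edge and $F^*_j$ is generic, hence $V(F^*_j)\setminus V(G_{j-1})$ coincides with $V(F^*_j)$ minus the attachment edge. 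Then I would simply quote the definition of fully open and conclude.

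The one point that needs a tiny bit of care — and the only place where anything could go wrong — is making sure the two notions of ``vertices not in the attachment edge'' and ``inner vertices'' really do coincide, i.e. that no inner vertex of $F^*_j$ was already present in $G_{j-1}$. This is exactly where genericity of the regular step $F^*_j$ is used: since $F^*_j$ meets $G_{j-1}$ in exactly the $k$ vertices of the attachment edge, every vertex of $F^*_j$ outside that edge is genuinely new, hence an inner vertex, and conversely. So I do not expect a real obstacle here; the claim is a bookkeeping step that makes the later counting (which tracks how many steps can keep a fully open step fully open) precise. The proof is therefore:

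\begin{proof}
Since $F^*_j$ is a regular step, by the asymmetric-balancedness of $\mathcal F^*$ its intersection $H_j = F^*_j \cap G_{j-1}$ consists of exactly one edge (the attachment edge of $F^*_j$) and $F^*_j$ is generic. In particular $V(F^*_j)\cap V(G_{j-1})$ is precisely the set of $k$ vertices of the attachment edge, so that the set of vertices of $F^*_j$ not lying in its attachment edge equals $V(F^*_j)\setminus V(G_{j-1})$, which is by definition the set of inner vertices of $F^*_j$. Now assume that no step $F^*_{j'}$ with $j' \in \{j+1,\ldots,i\}$ contains an inner vertex of $F^*_j$. By the identification just made, this means that no such $F^*_{j'}$ intersects a vertex of $F^*_j$ outside the attachment edge of $F^*_j$. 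Together with the fact that $F^*_j$ is a regular step, this is exactly the definition of $F^*_j$ being fully open in $S_i$.
\end{proof}
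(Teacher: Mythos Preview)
Your proof is correct. You observe that, for a regular step $F^*_j$, asymmetric-balancedness forces $H_j = F^*_j \cap G_{j-1}$ to be exactly the attachment edge, so the inner vertices of $F^*_j$ coincide with the vertices of $F^*_j$ outside its attachment edge; the claim then becomes a restatement of the definition of ``fully open''. This is clean and fully rigorous.

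The paper takes a different route: it decomposes the generic $F^*_j$ into its centre $F_2$ and petals $F_1^1,\ldots,F_1^{e(F_2)-1}$, and for each petal $F_1^t$ applies Claim~\ref{claim:regular-has-open-edges} with the ambient graph $H_t$ obtained by adjoining the centre and all other petals to $G_{i-1}$. That argument is really establishing that the petal edges of $F^*_j$ remain \emph{open} in $G_i$ under the hypothesis, which is the operational consequence of ``fully open'' used later. Your approach verifies the stated definition of ``fully open'' directly and avoids the petal decomposition and the appeal to Claim~\ref{claim:regular-has-open-edges}; the paper's approach buys the openness of the petal edges in one stroke (which is what is actually needed downstream), at the cost of some additional structural unpacking and notation. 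Either way, the claim is essentially bookkeeping once the identification ``inner vertices $=$ vertices outside the attachment edge'' is made, and your argument makes that explicit.
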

\begin{proof}
Let $G_{i}$ be the graph obtained at $i$-th step of the grow sequence and let 
$F_2$ be the `center' and $F_1^1, \ldots, F_1^{e(F_2) -1}$ be the `petals' of  $F^*_j$.  Set $$
H_j := G_{i-1} \cup F_2 \cup 
\bigcup_{t \in [e(F_2) - 1] \setminus \{j\}} F^t_1. 
$$
By applying Claim \ref{claim:regular-has-open-edges} to $H_t$ (as $G$) and $F^t_1$ as $(F)$ for all $t \in [e(F_2)]$ we obtain the lemma.
\end{proof}

For $i \ge 1$ let $\kappa(i)$ denote the number of fully open copies "destroyed" by step $i$, i.e. 
$$
\kappa(i) = |\{j < i \colon F^*_j \text{ is fully open in $S_{i-1}$ but not in $S_i$}\}|.
$$ 
The following claims show how  regular and degenerate steps influence $\kappa(\cdot)$.

\begin{claim}
\label{claim:reg-effect}
If $F_i^*$ is a regular step, then $\kappa(i) \leq 1$.
\end{claim}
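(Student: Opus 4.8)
The plan is to show that when a regular step $F^*_i$ is added, it can destroy the full-openness of at most one earlier step, by checking that its attachment edge $e_i$ "points at" a uniquely determined earlier step. First I would record the structural fact about regular steps that already underlies the proof of Lemma~\ref{lemma:boundedcore}: by asymmetric-balancedness, every regular step $F^*_m$ meets $G_{m-1}$ in exactly its attachment edge, so that the inner vertices of $F^*_m$ are precisely the vertices of $F^*_m$ outside that attachment edge, and they are disjoint from $V(G_{m-1})$. Applied to $F^*_i$ itself, this says that the inner vertices of $F^*_i$ are new, hence any vertex of $F^*_i$ lying in $V(G_{i-1})$ must be a vertex of $e_i$.

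Next I would observe that if $F^*_i$ destroys the full-openness of a step $F^*_j$ (so $j<i$ and $F^*_j$ is fully open in $S_{i-1}$ but not in $S_i$), then $e_i$ contains an inner vertex of $F^*_j$. Indeed, by the definition of a fully open step this can only happen because $F^*_i$ meets some vertex $u$ of $F^*_j$ lying outside the attachment edge of $F^*_j$; since $F^*_j$ is a regular step, $u$ is an inner vertex of $F^*_j$ and therefore lies in $V(F^*_j)\subseteq V(G_j) \subseteq V(G_{i-1})$, so by the first paragraph $u \in e_i$.

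The key step is then to note that the index $j$ above is determined by $e_i$ alone. Since $F^*_j$ is fully open in $S_{i-1}$ and $u$ is one of its inner vertices, $F^*_j$ is the \emph{only} step among $F^*_1,\dots,F^*_{i-1}$ whose vertex set contains $u$: for $m<j$ this holds because $u \notin V(G_{j-1}) \supseteq V(F^*_m)$, and for $j<m\le i-1$ it holds by full-openness of $F^*_j$ in $S_{i-1}$. But $e_i \in E(G_{i-1})$, so $e_i$ is an edge of some $F^*_m$ with $m\le i-1$, and any such $F^*_m$ has all of $e_i$, in particular $u$, in its vertex set; hence $m=j$. Thus the unique early step having $e_i$ as an edge is exactly $F^*_j$. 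If $F^*_i$ destroyed a second, distinct fully-open step $F^*_{j'}$, the same reasoning would force this early step to equal $j'$ as well, contradicting $j \neq j'$. Therefore $F^*_i$ destroys at most one fully-open step, i.e.\ $\kappa(i) \le 1$.

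I expect the only delicate point to be the bookkeeping in the last two paragraphs: making precise that a non-attachment-edge vertex of a regular step is an inner vertex, and that such an inner vertex occurs in exactly one earlier step. Both follow directly from the asymmetric-balancedness structure of regular steps together with the definition of a fully open step, so once those are spelled out the contradiction is immediate. (Note that regularity of $F^*_i$ is genuinely used, via the first paragraph — a degenerate step could touch inner vertices of several earlier fully-open steps at once.)
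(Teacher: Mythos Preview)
Your proof is correct and follows essentially the same approach as the paper, which dispatches the claim in one line by observing that the attachment edge of $F^*_i$ can intersect inner vertices of only one (fully open) regular step and invoking Claim~\ref{claim:open-stays-open}. Your version is simply a careful unpacking of that sentence: you make explicit why only the attachment edge of $F^*_i$ can meet $V(G_{i-1})$, and why an inner vertex of a fully open step $F^*_j$ forces $e_i$ to be an edge of $F^*_j$ and of no other earlier step, so that the destroyed index is uniquely determined by $e_i$.
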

\begin{proof}
Since the attachment edge of $F^*_i$ can intersect inner vertices of only one regular step, together with Claim \ref{claim:open-stays-open} the proof is concluded.
\end{proof}
\begin{claim}
\label{claim:deg-effect}
If $F_i^*$ is a degenerate step, then $\kappa(i) \leq v(F^*_i)$.
\end{claim}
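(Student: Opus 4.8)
The plan is to charge, to each fully open step destroyed by step $i$, a distinct vertex of $F^*_i$. I would begin by recording a simple disjointness fact about inner vertices. For regular steps $F^*_{j_1}$, $F^*_{j_2}$ with $j_1 < j_2$, the inner vertices of $F^*_{j_2}$ are, by definition, the vertices of $F^*_{j_2}$ not in $G_{j_2-1}$; but $G_{j_2-1} \supseteq G_{j_1}$ contains all of $V(F^*_{j_1})$, hence in particular the inner vertices of $F^*_{j_1}$. So the inner-vertex sets of any two distinct regular steps of a grow sequence are disjoint. (One should note here that for a regular step the intersection with $G_{i-1}$ is exactly its attachment edge by asymmetric-balancedness, so "inner vertex of a regular step $F^*_j$" coincides with "vertex of $F^*_j$ off its attachment edge", which is the notion used in the definitions of \emph{fully open} and in Claim~\ref{claim:open-stays-open}.)

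Next I would pin down what it means for step $i$ to destroy a fully open step. Suppose $F^*_j$ is one of the steps counted by $\kappa(i)$: then $j < i$, the step $F^*_j$ is regular (being fully open), it is fully open in $S_{i-1}$, and it is not fully open in $S_i$. Applying the contrapositive of Claim~\ref{claim:open-stays-open} to $F^*_j$ and the prefix $S_i$, some step among $F^*_{j+1}, \dots, F^*_i$ must contain an inner vertex of $F^*_j$. On the other hand, "$F^*_j$ is fully open in $S_{i-1}$" says precisely that none of $F^*_{j+1}, \dots, F^*_{i-1}$ contains such a vertex. Therefore it is $F^*_i$ itself that contains an inner vertex of $F^*_j$.

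Finally I would conclude by counting. For each of the $\kappa(i)$ indices $j$ counted, $F^*_i$ contains at least one inner vertex of $F^*_j$; by the disjointness established above, these vertices are pairwise distinct across the different $j$, and they all lie in $V(F^*_i)$. Hence $\kappa(i) \le v(F^*_i)$, as claimed. (In fact the inner vertices in question also lie in $V(G_{i-1})$, so one even gets the slightly stronger bound $\kappa(i) \le v(F^*_i \cap G_{i-1})$, but this is not needed.) I do not expect a genuine obstacle in this argument; the only point demanding a little care is the matching of the two phrasings of "inner vertex" versus "vertex off the attachment edge" noted above, and keeping straight that all steps counted by $\kappa(i)$ are automatically regular.
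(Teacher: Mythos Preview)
Your proof is correct and follows essentially the same route as the paper's: show that each destroyed fully-open step contributes a distinct inner vertex to $V(F^*_i)$, and conclude by counting. The only minor difference is in the disjointness step: the paper argues that a vertex of $G_{i-1}$ is an inner vertex of at most one \emph{fully open} regular step (using the fully-open hypothesis to rule out later steps), whereas you prove the slightly stronger and cleaner fact that the inner-vertex sets of \emph{all} regular steps are pairwise disjoint, directly from $V(F^*_{j_1}) \subseteq G_{j_2-1}$.
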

\begin{proof}
Let us first prove that any vertex from $G_{i-1}$ is an inner vertex of at most one regular copy $F^*_j$ which is fully open in $S_{i-1}$. Let $v \in G_{i-1}$ be an arbitrary vertex from $G_{i-1}$ which is an inner vertex of some regular copy $F^*_j$ such that $F^*_j$ is fully-open in $S_{i-1}$.
By definition of a fully-open copy, no step $F^*_t$, for $j < t < i$, can contain $v$. On the other hand,
 $v \not \in G_{j-1}$ as $v$ is an inner vertex of a regular step.

By Claim~\ref{claim:open-stays-open}, a fully-open step $F^*_j$ in $S_{i-1}$ is a fully-open step in $S_i$ if $F^*_i$ does not intersect inner vertices of $F^*_j$. This together with the observation above proves the claim. 
\end{proof}

\begin{claim}
\label{claim:seq_regular}
Set $d = (e(F_2) - 2).$ 
Let $F^*_i, \ldots, F^*_{i + d}$ be a sequence of consecutive regular steps such that $\kappa(i) = 1$. Then $\kappa(i+1) = \ldots = \kappa(i + d) = 0$.
\end{claim}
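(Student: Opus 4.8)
The plan is to follow what the decomposition algorithm (Algorithm~\ref{algo:grow-sequence-algo-asymm}) does immediately after the step $F^*_i$ that destroys a fully open copy, and to show it is then ``trapped'' refilling the petals of that copy for the next $d$ steps without destroying anything new.

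\textbf{Step 1: identify the copy destroyed at step $i$.} First I would observe that $F^*_i$ must be an \emph{open} regular step. A closed regular step is chosen (lines~\ref{line:copy-of-f-star-closed}, resp.\ \ref{line:copy-of-F_1-closed}) only when $G_{i-1}$ has no open edge; but a fully open step carries $(e(F_2)-1)(e(F_1)-1)\ge 1$ open edges, so if $G_{i-1}$ had no open edge there would be no fully open step in $S_{i-1}$ and nothing for step $i$ to destroy. Hence $F^*_i$ is attached in line~\ref{line:copy-of-f-star-open} at the minimal open edge $e$ of the smallest-index step $F^*_{j^*}$ that still has open edges, and by asymmetric-balancedness (assumption (iii) of Theorem~\ref{thm:asymmetric-0}) $F^*_i$ is generic, so $V(F^*_i)\cap V(G_{i-1})=V(e)$. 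Let $F^*_{j_0}$ be the unique fully open step destroyed by step $i$ (unique by Claim~\ref{claim:reg-effect}). Destroying it means $F^*_i$ meets an inner vertex $w$ of $F^*_{j_0}$; since $F^*_i$ meets $V(G_{i-1})$ only in $V(e)$, we get $w\in V(e)\subseteq V(F^*_{j^*})$. A short case check forces $j^*=j_0$: if $j^*>j_0$ then the inner vertices of $F^*_{j^*}$ are disjoint from $V(G_{j^*-1})\supseteq V(F^*_{j_0})$, so $w$ lies in the attachment edge of $F^*_{j^*}$, whence step $j^*<i$ already touched the inner vertex $w$ of $F^*_{j_0}$, contradicting that $F^*_{j_0}$ is fully open in $S_{i-1}$; and if $j^*<j_0$ then $w\in V(F^*_{j^*})\subseteq V(G_{j_0-1})$ while $w$ is an inner vertex of $F^*_{j_0}$, a contradiction. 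So step $i$ attaches $F^*_i$ at a petal edge $e$ of $F^*_{j_0}$, and since $e$ is the attachment edge of the $\cF^*$-member $F^*_i$ (hence lies on its central $F_2$-copy) while $e$ lies on an $F_1$-petal of $F^*_{j_0}$, and these copies are otherwise edge-disjoint, $e$ becomes \emph{closed} in $G_i$.

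\textbf{Step 2: the ``one petal per step'' fact.} Being a generic open regular step, $F^*_{j_0}$ is a copy of a member of $\cF^*(F_1,F_2)$ with exactly $e(F_2)-1$ petals, each an $F_1$-copy sharing only its center edge with the central $F_2$-copy; by Claim~\ref{claim:regular-has-open-edges} each petal carries $e(F_1)-1\ge 1$ open edges, all still open in $G_{i-1}$ because $F^*_{j_0}$ is fully open in $S_{i-1}$. The fact I need is: \emph{when a generic open regular step is attached at an open edge $e'$ of $F^*_{j_0}$ lying in a petal $P$, the only edges of $F^*_{j_0}$ whose open/closed status can change are those of $P$}; equivalently, such a step closes at most $e(F_1)-1$ open edges of $F^*_{j_0}$. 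Indeed, the freshly attached copy meets $F^*_{j_0}$ (and $G_{i+s-1}$) only in $V(e')\subseteq V(P)$, so any newly created copy of $F_1$ or $F_2$ through an edge of $F^*_{j_0}$ outside $P$ would have to be a copy of $F_1$ or $F_2$ glued along at most $k-1$ vertices to the new copy while straddling two petals of $F^*_{j_0}$; since $F_1$ and $F_2$ are strictly $k$-balanced with $m_k\ge 1$ (assumption (i)) and $\cF^*(F_1,F_2)$ is asymmetric-balanced (assumption (iii)), no such copy exists. I expect this last verification --- ruling out ``long'' copies of $F_1$ or $F_2$ straddling $F^*_{j_0}$ and the newly attached copy --- to be the one genuinely technical point and the main obstacle; everything else is index bookkeeping.

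\textbf{Step 3: induction and conclusion.} Now I would induct on $s=0,1,\dots,d$ with the hypothesis that steps $i,i+1,\dots,i+s-1$ each attach a generic open regular copy at an open petal edge of $F^*_{j_0}$. By Step~2 these $s$ steps close at most $s(e(F_1)-1)$ of the $(e(F_2)-1)(e(F_1)-1)$ open edges of $F^*_{j_0}$, so for $s\le d=e(F_2)-2$ at least $(e(F_1)-1)(e(F_2)-1-s)\ge 1$ of them survive in $G_{i+s-1}$; since $F^*_{j_0}$ was already the smallest-index step with open edges at step $i$ and later steps never reopen edges, $F^*_{j_0}$ is still the smallest-index step with open edges at step $i+s$, so step $i+s$ --- which is regular by hypothesis, hence open regular --- attaches a generic copy $F^*_{i+s}$ at an open petal edge of $F^*_{j_0}$ (this advances the induction). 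Finally, for $1\le s\le d$, the copy $F^*_{i+s}$ meets $V(G_{i+s-1})$ only inside $V(F^*_{j_0})$; so if it destroyed some $F^*_{j'}$ fully open in $S_{i+s-1}$, a vertex of $F^*_{j_0}$ would be an inner vertex of $F^*_{j'}$: impossible for $j'>j_0$ (inner vertices of $F^*_{j'}$ are disjoint from $V(G_{j'-1})\supseteq V(F^*_{j_0})$), impossible for $j'<j_0$ (then step $j_0$ already touched an inner vertex of $F^*_{j'}$, so $F^*_{j'}$ is not fully open in $S_{j_0}$, hence not in $S_{i+s-1}$), and $j'=j_0$ is excluded because $F^*_{j_0}$ is no longer fully open in $S_{i+s-1}$. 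Therefore $\kappa(i+s)=0$ for every $1\le s\le d$, which is the claim.
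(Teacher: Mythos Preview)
Your approach is essentially the same as the paper's: identify the destroyed fully-open step $F^*_{j_0}$, show that the next $d$ regular steps all attach at petal edges of $F^*_{j_0}$, and conclude $\kappa=0$ from that. Your Steps~1 and~3 are in fact spelled out more carefully than the paper's terse argument (the paper leaves the identification $j^*=j_0$ and the deduction of $\kappa(i+s)=0$ implicit in the sentence ``It is sufficient to prove\ldots'').

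The one place you diverge is Step~2. You count \emph{edges} closed per step (at most $e(F_1)-1$) and flag the verification --- ruling out copies of $F_1$ or $F_2$ straddling the new step and another petal --- as ``the one genuinely technical point and the main obstacle''. The paper sidesteps this by counting \emph{petals} touched instead: each generic regular step meets $G_{i+s-1}$ only in its attachment edge, hence touches the inner vertices of exactly one petal; so after $s+1$ steps at least one petal $F'_{q+1}$ is untouched (its extra vertices lie in no other edge of $G_{i+s}$), and then Claim~\ref{claim:regular-has-open-edges} applied to $F'_{q+1}$ shows all its petal edges remain open in $G_{i+s}$. This is not a different idea --- your Step~2 is exactly what Claim~\ref{claim:regular-has-open-edges} delivers --- but the paper's phrasing lets you invoke a result already proved rather than redo the strict-balancedness computation. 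So the ``obstacle'' you anticipate has already been removed earlier in the subsection; you only need to recognise it.
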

\begin{proof}
As $\kappa(i) =1 $ it holds that $F^*_i$ is the first step which intersects the inner vertices of some fully open step $F^*_j, j < i$. From previous observations we have that $F^*_j$ has $(e(F_2) - 1)(e(F_1) - 1)$ open edges before the step $F^*_i$ is made. 
It is sufficient to prove that after steps $F^*_{i}, \ldots, F^*_{i + s}$, for $s < e(F_2) -2$, there is still at least one open edge in $F^*_j$.
Let $F'_1, \ldots, F'_{e(F_2) -1}$ be the petals of $F^*_j$ (which are copies of $F_1$). Note that $F^*_{i+1}, \ldots, F^*_{i + s}$ can
intersect at most $s$ of different petals and without loss of generality let us assume they are 
$F'_1, \ldots, F'_q$, for $q \le s$.
By applying Claim \ref{claim:regular-has-open-edges} to $F'_{q + 1}$ (as $F$) and 
$G_{i+s}$ (as $G$) we obtain that all the edges in $F'_{q+1}$, except for maybe one, are open in $G_{i+s}$. 
Observe that this is the case since we always choose `minimal' open edges (cf.\ Algorithm~\ref{algo:grow-sequence-algo-asymm}) and for two fully open steps in some $S_i$ those edges are smaller that appear first in the grow sequence.
\end{proof}

Using previous three claims we are able to show connection between the number of fully-open steps in $S_i$ and the number of regular and degenerate steps in $S_i$.

\begin{claim}
\label{claim:recurence}
Set $C_1 := 1 - \frac{1}{e(F_2) - 2}$ and $C_2:= v(F_2) + (e(F_2) -1) (v(F_1) - k) + 1$. Then $\fo(S_i) \ge \reg(S_i)\cdot C_1 - \deg(S_i)\cdot C_2$
\end{claim}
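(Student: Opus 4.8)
The plan is to track the quantity $\fo(S_i)$ — the number of currently fully-open steps — as $i$ increases, and to charge each ``loss'' of a fully-open step to either a degenerate step or to a block of consecutive regular steps. First I would set up the accounting: when we pass from $S_{i-1}$ to $S_i$ by adding step $F^*_i$, the number of fully-open steps changes by (number of new fully-open steps created) $-\,\kappa(i)$, where $\kappa(i)$ counts previously fully-open steps destroyed by $F^*_i$. Note that a regular step is fully open in $S_i$ the moment it is added (by Claim~\ref{claim:regular-has-open-edges} and the definition of fully open, since nothing after it has touched its inner vertices yet), so each regular step contributes $+1$ to the ``created'' count, while a degenerate step contributes $0$ or could itself be $F_1$ which is never counted as regular. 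Thus $\fo(S_\ell)$ telescopes to $\reg(S_\ell) - \sum_{i=1}^{\ell}\kappa(i)$, and the whole claim reduces to the bound
$$
\sum_{i=1}^{\ell}\kappa(i) \le (1-C_1)\,\reg(S_\ell) + C_2\,\deg(S_\ell),
$$
i.e.\ to showing the total destruction is small. The same identity restricted to a prefix gives the statement for $S_i$.

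Next I would bound $\sum \kappa(i)$ by splitting the sum according to step type. For degenerate steps, Claim~\ref{claim:deg-effect} gives $\kappa(i)\le v(F^*_i)$, and since every member of $\cF^*(F_1,F_2)$ has at most $v(F_2)+(e(F_2)-1)(v(F_1)-k)$ vertices (and a copy of $F_1$ has $v(F_1)$ vertices, which is smaller), each degenerate step contributes at most $C_2-1 < C_2$; summed over all degenerate steps this gives the $C_2\,\deg(S_i)$ term with room to spare. For regular steps, Claim~\ref{claim:reg-effect} gives $\kappa(i)\le 1$, but the key improvement is Claim~\ref{claim:seq_regular}: whenever a regular step $F^*_i$ has $\kappa(i)=1$, the next $d=e(F_2)-2$ consecutive regular steps (if they are consecutive in the sequence) all have $\kappa=0$. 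Hence, among any maximal run of consecutive regular steps, at most a $\frac{1}{e(F_2)-1}$-fraction — more precisely, at most one out of every $e(F_2)-1$ — can have $\kappa=1$. This is where $C_1 = 1 - \frac{1}{e(F_2)-2}$ enters: the number of regular steps with $\kappa(i)=1$ is at most roughly $\reg(S_i)/(e(F_2)-1)$, and a short computation shows this is at most $(1-C_1)\reg(S_i)$ plus possibly a constant absorbed into $C_2$ (the ``$+1$'' in $C_2$). Putting the two contributions together yields $\sum_{i}\kappa(i) \le (1-C_1)\reg(S_i) + C_2\deg(S_i)$, hence $\fo(S_i) = \reg(S_i) - \sum\kappa \ge C_1\reg(S_i) - C_2\deg(S_i)$.

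The main obstacle I anticipate is the bookkeeping around ``consecutive regular steps'': Claim~\ref{claim:seq_regular} is stated for a block of steps that are consecutive \emph{in the grow sequence}, but a run of regular steps can be interrupted by degenerate steps, which resets the counting. So one has to argue carefully that interruptions by degenerate steps only cost us an extra additive constant per degenerate step (which $C_2$ is generous enough to absorb, since we only used $C_2-1$ of it above), rather than multiplicatively blowing up the bound. Concretely, I would partition the regular steps of $S_i$ into maximal consecutive runs (maximal intervals of indices carrying only regular steps), apply the $\tfrac{1}{e(F_2)-1}$-density bound from Claim~\ref{claim:seq_regular} within each run, and pay an additive $+1$ at the boundary of each run — of which there are at most $\deg(S_i)+1$. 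Choosing the constants $C_1,C_2$ exactly as in the statement makes all these slacks fit; the remaining work is the elementary inequality manipulation, which I would not spell out in full.
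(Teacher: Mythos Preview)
Your approach is correct and is essentially the paper's argument rephrased: the paper sets $\phi(i) = \fo(S_i) - C_1\,\reg(S_i) + C_2\,\deg(S_i)$ and proves $\phi(i)\ge 0$ (with $\phi(i)\ge 1$ immediately after a degenerate step) by induction on $i$, which is exactly your telescoping $\fo(S_i)=\reg(S_i)-\sum\kappa$ together with the run-partitioning bookkeeping. One small point worth making explicit is that both versions need the mild strengthening of Claim~\ref{claim:seq_regular} to shorter blocks $F^*_i,\ldots,F^*_{i+s}$ with $s\le e(F_2)-2$ (not just $s=e(F_2)-2$), which follows from the same proof.
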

\begin{proof}

Set $\phi(i) = \fo(S_i) - \reg(S_i)\cdot C_1 + \deg(S_i)\cdot C_2.$
We prove by induction the following, stronger, claim:  for every $i \ge 1$
 $$
 \phi(i) \ge 
 \begin{cases}
 1 & F^*_i \text{ is a degenerate step,}\\
 0 & F^*_i \text{ is a regular step.}
 \end{cases}
 $$
Since the first step is by definition degenerate the hypothesis is true for $i = 1$.
Assume the claim holds for all $i < i'$.
If $\kappa(i') = 0$, then since $C_1 < 1$ and $C_2 \ge 1$ one can easily check that the claim holds.
Thus, let us assume $\kappa(i') > 0$.
Furthermore, if $F^*_{i'}$ is a degenerate step then by Claim \ref{claim:deg-effect} we know $\phi(i') \ge \phi(i' -1) + 1$. From the induction hypothesis, this implies  $\phi(i') \ge 1$.

From now on we assume $\kappa(i') = 1$ and $F^*_{i'}$ is a regular step. 
% As $\fo(S_{i'}) = \fo(S_{i' -1}) + 1$ and $C_1 < 1$ the induction hypothesis is true and the claim follows.
Let $j < i'$ be the largest integer such that $\kappa(j) > 0$ or that $F^*_j$ is a degenerate step. 
One easily obtains \begin{equation}
\label{eq:phi}
\phi(i') = \phi(j) + i'- j - 1 - (i' - j) C_1.
\end{equation}
If $F^*_j$ is a degenerate step, then $\phi(j) \ge 1$ and by \eqref{eq:phi} and $C_1 \le 1$ we have $\phi(i') \ge 0$.
Let us consider the case when $F^*_j$ is a regular step. By Claim \ref{claim:seq_regular} we know that $ i' - j \ge e(F_2) - 1$. 
Thus 
\begin{align*}
\phi(i') = \phi(j) + (i- j)(1 - C_1) - 1 > \phi(j) + 1-1 \ge 0,
\end{align*}
where the last inequality holds as $F_2$ has at least three edges.
This concludes the proof.
\end{proof}

We have all the necessary tools to finish the proof of Claim \ref{lem:open-edges-bound-asymm}.
% \begin{lemma}\label{lem:open-edges-bound-asymm}
% Let $C_1$ and $C_2$ be constants as in Claim \ref{claim:recurence}
%   and let \(S\) be a grow sequence of length \(t\).
%   \begin{enumerate}[i)]
%   \item \label{item:lem-open-edges-bound-1-asymm}If \(S\) contains at most \(d\)
%     degenerate steps, then \(t \le d ( 1 + C_2 / C_1) \)
%   \item \label{item:lem-open-edges-bound-2-asymm}If a prefix \(S_i\) of \(S\), \(1
%     \leq i \leq \ell\), contains at most \(d\) degenerate steps, then \(S_i\)
%     contains no closed regular steps \(F_j\) with \(j > d ( 1 + C_2 / C_1)\) + 1.
%   \end{enumerate}
% \end{lemma}
% \begin{proof}
Let $C_1, C_2$ be as in Claim \ref{claim:recurence}.
Let \(S\) be any grow sequence of length \(t\), and \(S_i\), \(i \leq
t\) such that $i = t$ or $F^*_{i+1}$ is a closed regular step. 
Furthermore, let us assume $S_i$ contains at most $d$ degenerate steps.
By construction,  graph $G_i$ does not have an open edge and thus it does not have a fully open copy, i.e. $\fo(S_i) = 0$. 
Using Claim \ref{claim:recurence} we obtain
$$
\fo(S_i)   = 0  \geq \reg(S_i)\cdot C_1 - \deg(S_i)\cdot C_2.
$$
Since there are at most $d$ degenerate steps in $S_i$, by using the previous observation we obtain
$$
d C_2 \ge (i - d) C_1,
$$
which implies $i \le d( 1 + C_2 / C_1)$.
This concludes the proof of Claim \ref{lem:open-edges-bound-asymm}.
% \end{proof}

%%%% Verifying properites (ii) and (iii) of the general theorem for 0-statement
%!TEX root = hypergraphs.tex
\section{Threshold for $\Hknp \rightarrow (K_3^{+k}, \HFii)$}\label{sec:specialcase}

In this section prove Lemma \ref{lemma:0_statement_aux}, one of the ingredients in the proof of Theorem \ref{thm:main_beta}. We do that by applying Theorem \ref{thm:asymmetric-0}. Since it is easy to check that 
property $(i)$ in Theorem \ref{thm:asymmetric-0} holds and  that $K_3^{+k}$ is strictly balanced with respect to $m_k(\cdot, \HFii)$, in the next two sections we verify properties (iii) and (iv).

%!TEX root = hypergraphs.tex
\subsection {Property $(iv)$}

In this section we prove that property $(iv)$ of Theorem \ref{thm:asymmetric-0} holds for $F_1 = \HFi$ and $F_2 = \HFii$. For convenience we state this as a lemma.

\begin{lemma} \label{lemma:property_b}
For $k \ge 4$ and every $k$-uniform hypergraph $H$ such that $m(H) \le m_k(\HFi, \HFii)$ we have
$$
	H \not \to (\HFi, \HFii).
$$
\end{lemma}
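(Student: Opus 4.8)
The statement $H\not\to(K_3^{+k},C_{t_k})$ asks for a red/blue colouring of $E(H)$ with no red copy of $K_3^{+k}$ and no blue copy of $C_{t_k}$, so the plan is to produce one for every $H$ with $m(H)\le\theta:=m_k(K_3^{+k},C_{t_k})$ by examining a minimal counterexample and reaching a contradiction. Suppose $H$ has $m(H)\le\theta$, $H\to(K_3^{+k},C_{t_k})$, and $e(H)$ is minimum with these properties. The usual local reductions then apply: $H$ is connected (else colour the components separately, each having $m\le\theta$); every edge lies in a copy of $K_3^{+k}$ (else colour that edge red in a good colouring of $H-e$, supplied by minimality); every edge lies in a copy of $C_{t_k}$ (else colour it blue); and, by running the open-edge extension argument from the beginning of Section~\ref{sec:asymmetric} verbatim, every edge of $H$ is in fact \emph{closed}. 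Writing $H$ as an edge-disjoint union of minimal closed $(K_3^{+k},C_{t_k})$-cores $H_1,\dots,H_t$, each with $m(H_i)\le\theta$, and using that every copy of $K_3^{+k}$ or $C_{t_k}$ lies inside a single core, it suffices to colour every $H_i$ validly to contradict the fact that $H$ is a counterexample.

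The next step is to bound $v(H_i)$. I would run Algorithm~\ref{algo:grow-sequence-algo-asymm} on $H_i$ and reuse only the density bookkeeping from the proof of Lemma~\ref{lemma:boundedcore} — the part that needs just properties (ii) and (iii) of Theorem~\ref{thm:asymmetric-0}, not (iv), both of which are verified independently in this section. In that accounting the quantity $e-\theta v$ decreases by exactly $3-\theta(k+1)<0$ at the initial step, changes by $0$ at every regular step (asymmetric-balancedness of $\mathcal F^*(K_3^{+k},C_{t_k})$ together with $K_3^{+k}$ being strictly balanced with respect to $m_k(\cdot,C_{t_k})$), and increases by at least a fixed positive constant at every degenerate step. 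Since $m(H_i)\le\theta$ forces $e(H_i)-\theta\,v(H_i)\le 0$, the grow sequence produced for $H_i$ contains at most a constant number $\dmax=\dmax(k)$ of degenerate steps; Claim~\ref{lem:open-edges-bound-asymm} then bounds its total length, and hence $v(H_i)\le L$ for a constant $L=L(k)$.

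It remains to colour a minimal closed $(K_3^{+k},C_{t_k})$-core $G$ with $v(G)\le L$ and $m(G)\le\theta$; this is the heart of the matter. I would exploit the grow-sequence description: $G$ is built from one copy of $K_3^{+k}$ by at most $L$ attachments, all but at most $\dmax$ of which are \emph{regular}, i.e.\ glue along a single edge either a generic $F^*\in\mathcal F^*(K_3^{+k},C_{t_k})$ — a copy of $C_{t_k}$ with a $K_3^{+k}$ hung on each of its edges but one — or a single $K_3^{+k}$-copy. On the subgraph assembled by the regular steps one uses the explicit colouring that already works on a single member of $\mathcal F^*$: in every $C_{t_k}$-centre colour one edge red and the remaining $t_k-1$ blue, and in every $K_3^{+k}$-petal colour one of its two non-central edges blue and the other red; this leaves each copy of $C_{t_k}$ with a red edge and each copy of $K_3^{+k}$ with a blue edge. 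The $O(1)$ edges introduced by degenerate steps, and the finitely many cores that arise from degenerate steps alone, are then disposed of by a bounded case analysis. I expect this last point — checking that the attachments along more than one vertex cannot obstruct the colouring and that the finitely many exceptional small cores are genuinely colourable — to be the main obstacle, and it is exactly what pins down the particular values $t_4=8$, $t_5=14$, $t_k=k^2$ for $k\ge 6$ and the restriction $k\ge 4$.
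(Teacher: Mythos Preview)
Your proposal takes a route that is quite different from the paper's. The paper argues via a \emph{minimum-degree vertex}: in a vertex-minimal counterexample $H$ there is some $x$ with $\deg(x)\le\lfloor k\theta\rfloor$, one takes a valid colouring of $H\setminus x$ (by minimality), and then extends it across the edges through $x$ by analysing the $(k-1)$-uniform link hypergraph $H_x$. The whole problem then becomes: find a $(k-3)$-intersecting set $S\subseteq E(H_x)$ such that $H_x\setminus S$ is $T^{k-1}_{2(k-1)}$-free. Colouring the edges corresponding to $S$ red and the rest blue works because red edges through $x$ are $(k-2)$-intersecting (so no red $K_3^{+k}$), and blue edges through $x$ contain no tight path long enough to sit inside a blue $C_{t_k}$. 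This reduces the lemma to two concrete combinatorial statements about hypergraphs with at most $\lceil\tfrac32 k\rceil$ (resp.\ $7$, when $k=4$) edges --- Lemmas~\ref{lemma:rdo_lge5} and~\ref{lemma:rdo_l4} --- which are proved directly.

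Your reduction to bounded-size cores is correct and is a nice deterministic variant of the argument behind Lemma~\ref{lemma:boundedcore}: since $e(G_t)-\theta v(G_t)\le 0$, the initial deficit $3-\theta(k+1)<0$ can be offset by only boundedly many degenerate steps, and Claim~\ref{lem:open-edges-bound-asymm} then caps the length. There is no circularity, as you note; this uses only properties~(ii) and~(iii).

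The gap is the last paragraph. You have reduced the lemma to checking it on finitely many hypergraphs of size at most $L$, but $L$ here depends on $1/\alpha$ where $\alpha$ is the minimum slack in the asymmetric-balancedness inequalities, and this is not small (already for the $F_1$-steps with $k=4$ one gets $\alpha\le 1/21$); the resulting case analysis is not a priori feasible. More importantly, the explicit colouring you sketch is only verified for a \emph{single} generic $F^*$. When two regular steps are glued along an edge, that edge's colour is already fixed by the earlier piece, so you cannot freely pick ``one red cycle edge'' in the new centre; and the first step of the grow sequence is a bare $K_3^{+k}$, not an $F^*$, so the scheme does not even start cleanly. Neither the extension across glueings nor the handling of degenerate steps is addressed, and these are precisely where new copies of $K_3^{+k}$ or $C_{t_k}$ can appear across pieces. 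As you yourself anticipate, this \emph{is} the main obstacle --- and it is the entire content of the lemma. The paper's min-degree/link-hypergraph approach sidesteps it completely and turns the problem into a tractable statement about hypergraphs with $O(k)$ edges.
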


First we introduce some notation and definitions. We denote with $\Tlk$ a $k$-uniform hypergraph on the  vertex set $\{v_1, \ldots, v_\ell\}$ and edges given by $\{v_i, \ldots, v_{i + k - 1}\}$ for each $1 \le i \le \ell - k + 1$. We refer to $\Tlk$ as a \emph{tight path} of size $\ell$. An ordering $(e_1, \ldots, e_{\ell - k +1})$ of the edges of $\Tlk$ is called \emph{natural} if $|e_i \cap e_{i+1}| = k - 1$ for every $1 \le i \le \ell - k$.

The proof of Lemma \ref{lemma:property_b} relies on the following two lemmas. 

\begin{lemma}
Let $k \geq 4$ be an integer and let $H$ be $k$-uniform hypergraph with at most $\lceil \frac{3}{2} (k + 1) \rceil$ edges. Then there exists a $(k-2)$-intersecting  subset $S \subseteq E(H)$ of edges such that $H \setminus S$ does not contain $T^k_{2k}$.
\label{lemma:rdo_lge5}
\end{lemma}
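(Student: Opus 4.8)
The plan is to argue directly, distinguishing whether or not $H$ contains a copy of the tight path $T^k_{2k}$. If $H$ is $T^k_{2k}$-free, then $S=\emptyset$ already works, since $H\setminus S = H$. So suppose $H$ contains a copy $P$ of $T^k_{2k}$ and fix a natural ordering $f_1,\dots,f_{k+1}$ of its edges, so that $|f_i\cap f_j|=\max\{0,\,k-|i-j|\}$ for all $i,j$. I would then take $S$ to be the ``alternating'' subfamily
\[
  S := \{\, f_i \ :\ 1\le i\le k+1,\ i\text{ odd}\,\}.
\]
Any two distinct edges of $S$ have indices differing by at least $2$, hence meet in at most $k-2$ vertices, so $S$ is $(k-2)$-intersecting (and in particular $S$ spans no copy of $K_3^{+k}$, as that would require a pair of edges meeting in $k-1$ vertices). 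Moreover $|S|=\lceil(k+1)/2\rceil$, and a direct computation gives
\[
  |E(H)\setminus S| \;=\; e(H) - \Big\lceil\tfrac{k+1}{2}\Big\rceil \;\le\; \Big\lceil\tfrac{3(k+1)}{2}\Big\rceil - \Big\lceil\tfrac{k+1}{2}\Big\rceil \;=\; k+1,
\]
with equality only when $e(H)=\lceil\tfrac32(k+1)\rceil$.

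Since $T^k_{2k}$ has exactly $k+1$ edges, the hypergraph $H\setminus S$ can contain a copy of $T^k_{2k}$ only in the extremal case $|E(H)\setminus S|=k+1$, and then $E(H)\setminus S$ must be precisely the edge set of some tight path $Q\cong T^k_{2k}$, so that $E(H)$ is the disjoint union of $S$ and $E(Q)$. The crux of the argument is to handle this one configuration. In it, the even-indexed edges $f_2,f_4,\dots$ of $P$ all lie in $E(Q)$ (they are not in $S$), and the remaining $\lceil(k+1)/2\rceil$ edges of $Q$ lie outside $P$. I would now exploit the rigidity of tight paths: two edges of $Q$ meet in exactly $k-2$ vertices precisely when their positions in the natural order of $Q$ differ by $2$, so the family $\{f_2,f_4,\dots\}$ — whose members pairwise meet in $k-2, k-4,\dots$ vertices — must occupy positions forming an arithmetic progression $p_1, p_1+2, p_1+4,\dots$ in $Q$. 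A short count shows this forces $p_1\in\{1,2,3\}$, leaving only boundedly many placements. For each of them I would check that either some edge of $E(Q)\setminus E(P)$ meets every edge of $S$ in at most $k-2$ vertices — so that adjoining it to $S$ yields a $(k-2)$-intersecting family whose complement in $E(H)$ has only $k$ edges and hence is $T^k_{2k}$-free — or else the placement is geometrically inconsistent and cannot occur. Either way one obtains the required $S$.

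I expect this last step, the analysis of the extremal configuration, to be the main obstacle: it is the only place requiring a genuine (if finite) case distinction, everything else being bookkeeping with intersection sizes. The hypothesis $k\ge 4$ enters here — it makes the alternating subfamily of a $T^k_{2k}$ large enough that its removal leaves at most $k+1$ edges, and it keeps the intersection-size inequalities in the rigidity argument strict, both of which degenerate for $k\in\{2,3\}$. Finally, note that this lemma is the ``local'' ingredient for Lemma~\ref{lemma:property_b}: there one decomposes a general $H$ of bounded density into pieces with at most $\lceil\tfrac32(k+1)\rceil$ edges, applies the present lemma to each, and colours each $S$ red (no $K_3^{+k}$) and the remaining edges blue (no $T^k_{2k}$, hence no $C_{t_k}$ since $t_k\ge 2k$).
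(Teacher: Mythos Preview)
Your opening matches the paper: take $S$ to be the odd-indexed edges of a copy $P\cong T^k_{2k}$, observe $|E(H)\setminus S|\le k+1$, and reduce to the extremal configuration where $H':=H\setminus S$ is itself a copy $Q\cong T^k_{2k}$. Your rigidity argument (the even-indexed $f_{2i}$ occupy an arithmetic progression of step $2$ in the natural order of $Q$, so $p_1\in\{1,2,3\}$) is also correct.

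The gap is the endgame, which you yourself flag as the main obstacle and leave as ``I would check''. Your plan is to enlarge $S$ by one edge $e\in E(Q)\setminus\{f_{2i}\}$, but whether any such $e$ meets every $f_j\in S$ in at most $k-2$ vertices depends delicately on the vertex correspondence between $P$ and $Q$: most candidates $e$ share $k-1$ vertices with some $f_{2i}$, which in turn shares $k-1$ vertices with $f_{2i\pm1}\in S$, and nothing a priori prevents $|e\cap f_{2i\pm1}|=k-1$. Sorting out which placements are genuinely ``inconsistent'' is the whole proof, not a routine finite check. The paper takes a different route: rather than enlarging $S$, it forms two \emph{new} $(k-2)$-intersecting candidates $C_1=\{h_1,h_3,\dots\}$ and $C_2=\{h_2,h_4,\dots\}$ from the edges of $Q$, and shows that one of $H\setminus C_1=S\cup C_2$ and $H\setminus C_2=S\cup C_1$ is $T^k_{2k}$-free. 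Assuming copies $F_i\subseteq S\cup C_i$ exist for both $i$, a structural lemma (Lemma~\ref{lemma:core_l_r}, describing which edges of a tight path sit ``between'' two prescribed edges) forces some $x\in S$ to have a specific intersection pattern with $h_1,h_3\in C_1$; since $|S\cup C_2|=k+1$ we have $x\in E(F_2)$, and the same lemma applied inside $F_2$ to $h_2,h_4\in C_2$ shows this pattern is impossible. The two-candidate symmetry is what drives the contradiction, and has no analogue in your single-edge scheme.

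As a minor aside, your closing description of how this lemma feeds into Lemma~\ref{lemma:property_b} is off: there is no decomposition of $H$ into bounded pieces. One instead picks a minimum-degree vertex $x$ and applies the present lemma with parameter $k-1$ to the $(k-1)$-uniform \emph{link} hypergraph $H_x$, which has at most $\lfloor k\,m_k(K_3^{+k},C_{t_k})\rfloor\le\lceil\tfrac32 k\rceil$ edges.
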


In the proof of Lemma~\ref{lemma:property_b} we require Lemma~\ref{lemma:rdo_lge5} for a value of $k$ that is one smaller than the starting value. Thus for the case $k=4$ we would need Lemma~\ref{lemma:rdo_lge5} for $k=3$. Here we can only show a slightly 
weaker bound on the number of edges that nevertheless requires much work and whose proof can be found in an Appendix (since the lemma deals with hypergraphs on $7$ edges, it can in principle be checked `easily').

\begin{lemma}
\label{lemma:rdo_l4}
Let $H$ be a 3-uniform hypergraph with 7 edges. Then there exists a $1$-intersecting subset $S \subseteq H$  of edges such that $H \setminus S$ does not contain a copy of $T^3_{6}$.
\end{lemma}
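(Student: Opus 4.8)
The plan is a finite structural argument, split into a couple of cheap reductions and one genuinely case-heavy core. First the reductions. If $H$ already contains no copy of $T^3_6$, take $S=\emptyset$ and we are done. Any edge of $H$ that lies in no copy of $T^3_6$ may be deleted from $H$: a $1$-intersecting set $S$ (i.e. one in which no two edges share more than one vertex) whose removal makes the smaller hypergraph $T^3_6$-free works just as well for $H$, since adding back an edge contained in no copy cannot create one. So we may assume every edge of $H$ lies in some copy of $T^3_6$ and that at least one copy exists. The crucial numerical fact is that $T^3_6$ has exactly $4$ edges while $e(H)=7<8$, so no two copies of $T^3_6$ in $H$ are edge-disjoint; writing $\mathcal P$ for the family of edge-sets of copies of $T^3_6$ in $H$, this says $\mathcal P$ is an intersecting family of $4$-element subsets of the ($\le 7$-element) set $E(H)$.

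The main dichotomy is whether $\bigcap_{P\in\mathcal P}P\neq\emptyset$. If some edge $e^\star$ lies in every copy, then $S=\{e^\star\}$ is trivially $1$-intersecting and $H\setminus S$ is $T^3_6$-free, and we are done — this is exactly the statement that $H-e$ is $T^3_6$-free for some single edge $e$. Since $4$-subsets of a $\le 7$-set automatically pairwise intersect, the remaining case $\bigcap_{P\in\mathcal P}P=\emptyset$ forces $|\mathcal P|\ge 3$, and, because $E(H)$ has only $\le 7$ edges, the copies must overlap in a very constrained way. This is the case that has to be worked out by hand, and is what occupies the appendix.

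To organise it one exploits the rigidity of $T^3_6$: it has degree sequence $(1,2,3,3,2,1)$, its two end-edges $v_1v_2v_3$ and $v_4v_5v_6$ are disjoint, consecutive edges meet in a $2$-set, and the two middle edges $v_2v_3v_4$, $v_3v_4v_5$ share the pair $\{v_3,v_4\}$. In particular every copy of $T^3_6$ uses a pair of vertices (the images of $v_3,v_4$) of codegree at least $2$, each of degree at least $3$ in $H$; since $e(H)=7$ this pins down a small ``core'' of $H$ to which attention can be restricted. One then splits on the number and arrangement of such heavy pairs together with $|\mathcal P|$ and the way two or three representative copies overlap; in each of the finitely many resulting configurations one exhibits an explicit pair (occasionally a triple) of edges that pairwise share at most one vertex and meet every copy of $T^3_6$. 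As the whole picture lives on at most $7$ edges, this is in principle a short, even computer-assisted, enumeration.

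The step I expect to be the real obstacle is precisely this last enumeration, and specifically the $1$-intersecting requirement on $S$: a transversal of $\mathcal P$ using arbitrary edges is immediate, but one must verify that in every configuration of three or more overlapping copies of $T^3_6$ packed into $7$ edges there is still a transversal no two of whose edges share two vertices. As the paper itself notes, the content ``can in principle be checked easily'' yet ``requires much work'': the difficulty is bookkeeping rather than any single idea. It is also the reason $k=3$ is handled separately here — the uniform argument behind Lemma~\ref{lemma:rdo_lge5} for $k\ge 4$ degenerates, which forces this more ad hoc (and, on the edge count, slightly weaker) version.
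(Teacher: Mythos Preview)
Your proposal is an outline that stops exactly where the work begins. The reductions are fine, and the dichotomy ``either all copies of $T^3_6$ share an edge, or there are at least three copies packed into seven edges'' is a clean way to frame things. But as you yourself say, the entire content of the lemma is the enumeration in the second branch, and you do not carry it out --- you describe what the case analysis \emph{would} look like without actually exhibiting, in each configuration, a $1$-intersecting transversal. That is the lemma; a pointer to a finite check is not a proof of it. (A minor additional wrinkle: your ``delete edges lying in no copy'' reduction tacitly assumes the lemma for hypergraphs with fewer than $7$ edges, which you have not established; this is easy to patch by proving the statement for $\le 7$ edges by induction, but it should be said.)

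The paper organises the argument along a different and more efficient axis. Instead of asking whether all copies share an edge, it asks whether $H$ contains a $1$-intersecting \emph{triple}. If not, a short structural argument (their Claim~\ref{claim:xy_disjoint} plus Lemma~\ref{lemma:l4_special}) fixes one copy $F=(e_1,e_2,e_3,e_4)$, removes a disjoint pair, and chases the consequences to show that a specific pair like $\{e_1,e_3\}$ already works. If $H$ does contain a $1$-intersecting triple $\{a,b,c\}$, then $H\setminus\{a,b,c\}$ has exactly four edges; either these do not form a $T^3_6$ (and $S=\{a,b,c\}$ is the answer), or they do, and a direct analysis using their Lemma~\ref{lemma:core_l_r} finishes. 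The key advantage over your plan is that the paper's dichotomy immediately hands you a candidate $1$-intersecting set --- either the triple $\{a,b,c\}$ or a pair inside a fixed copy --- and the remaining work is to verify that candidate (or a small perturbation of it) by local arguments about how a single further copy of $T^3_6$ could sit. It never has to enumerate global overlap patterns among three or more copies, which is exactly the step your approach defers and which, because the $1$-intersecting constraint couples the hypergraph geometry back into the transversal problem, is genuinely unpleasant to execute.
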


Before we prove these two lemmas, we first show how they imply Lemma \ref{lemma:property_b}.

\begin{proof}[Proof of Lemma \ref{lemma:property_b}]
 Let us assume towards a contradiction that Lemma \ref{lemma:property_b} is false for some $k \ge 4$. Then there exists a $k$-uniform hypergraph $H$ with 
 $m(H) \le m_{k}(\HFi, \HFii)$ and $H \rightarrow (\HFi, \HFii)$ such that for every $H'$ obtained from $H$ by removing a single vertex (and all adjacent edges) we have $H' \not \to (\HFi, \HFii)$ (i.e. $H$ is a \emph{vertex-minimal} counterexample). Since we have
$$
 \frac{1}{k |V(H)|} \sum_{x \in V(H)} \deg(x) \le m(H) \le m_{k}( \HFi , \HFii)
$$ 
it follows that there exists a vertex $x \in V(H)$ such that $\deg(x) \le \lfloor k \cdot m_{k}(\HFi, \HFii) \rfloor$.
By the choice of $H$ we know that there exists a colouring of the edges in $H \setminus x$ (i.e. the subgraph induced by the vertex set $V(H) \setminus \{x\}$) without a red $\HFi$ and a blue $\HFii$. We now extend this colouring to the edges of $H$ incident to $x$.

Consider the link hypergraph $H_x$ of the vertex $x$, which is the $(k-1)$-uniform hypergraph with the edges $\left\{e\setminus\{x\}\colon e\in E(H), x\in e\right\}$. 
 Let us assume that there exists a $(k-3)$-intersecting set of edges $S \subseteq E(H_x)$ such that $H_x \setminus S$ does not contain a copy of $T_{2(k-1)}^{k-1}$ (we show later that we can indeed find such a set). Let $R$ and $B$ denote the edges of $H$ obtained by adding back the vertex $x$ to the edges of $S$ and $H_x \setminus S$, respectively.
Note that the set $R$ is $(k-2)$-intersecting since $S$ is $(k-3)$-intersecting. 
We claim that colouring the edges in $R$ with red and the edges in $B$ with blue gives a contradiction to the assumption $H \to (\HFi, \HFii)$:
\begin{itemize}
	\item By the assumption on the colouring of $H \setminus x$ any red copy of $K_3^{+k}$ has to contain $x$ and, therefore, at least two edges from $R$. However, as every two edges of $K_3^{+k}$ intersect on $k-1$ vertices, the existence of such copy would contradict the fact that $R$ is $(k-2)$-intersecting. 

	\item Similarly as in the previous case, a blue copy of $\HFii$ necessarily contains $x$ which implies that the subgraph given by the edges in $B$ contains $T_{2k - 1}^{k}$ (since $B$ is the set of blue edges incident to $x$ and $t_k \ge 2k$). Removing the vertex $x$ from every edge of such a copy gives a copy of $T_{2(k-1)}^{k-1}$ in $H_x \setminus S$, which is a contradiction.
\end{itemize}
To conclude, we obtained a colouring of $H$ which contains no red $K_3^{+k}$ and no blue $\HFii$, thus a contradiction with $H \to (\HFi, \HFii)$.

It remains to prove that we can find an $(k-3)$-intersecting set of edges $S \subseteq H_x$ such that $H_x \setminus S$ does not contain $T_{2(k-1)}^{k-1}$. Recall that
$$
	m_k(K_3^{+k}, \HFii) = \frac{3t_k - 3}{2t_k - k - 1}.
$$
If $k = 4$ then from the choice of $x$ we have
$$
\deg(x) \le \lfloor 4 \cdot m_{4}(K_3^{+4}, C_{8}) \rfloor = 7,
$$
and hence $H_x$ is a $3$-uniform hypergraph with at most $7$ edges. Now we can apply Lemma \ref{lemma:rdo_l4} to $H_x$ to obtain a set $S$ with the desired properties.
Otherwise, if $k \geq 5$ then one can check that
% \YP{I think this estimate is false for $t_k=2k$ and $k\ge 6$}
$$
\deg(x) \le \lfloor k \cdot m_{k}(K_3^{+k}, \HFii) \rfloor \le \left \lceil \frac{3}{2} k \right \rceil
$$
and thus $H_x$ is a $(k-1)$-uniform hypergraph with at most $\lceil \frac{3}{2} k \rceil$ edges. Therefore, Lemma \ref{lemma:rdo_lge5} guarantees the existence of the desired set $S$. This concludes the proof.
\end{proof}

In the next two subsections we prove Lemmas~\ref{lemma:rdo_lge5} and~\ref{lemma:rdo_l4}.

\subsubsection{Proof of Lemma \ref{lemma:rdo_lge5}}

We use the following two observations on the structure of tight paths.

\begin{lemma}
\label{lemma:core_l_r}
Let $k\ge 3$ and let $a_0, a_1$ be two different edges of the graph $T_{2k}^{k}$. Let $m := |a_0 \cap a_1|\ge 1$. Then there exist $k - m - 1$ different edges $\{e_1, \ldots, e_{k - m - 1} \} \subseteq E(T_{2k}^{k}) \setminus \{a_0, a_1\}$ such that for each $i \in \{1, \ldots, k - m - 1\}$ the following holds:
\begin{enumerate}
\item $|e_i \cap a_0 \cap a_1| = m$, 
\item $|e_i \cap (a_0 \setminus a_1) | = i$, and 
\item $|e_i \cap (a_1 \setminus a_0) | = k - m - i$.
\end{enumerate}
Moreover, for each edge $e' \in E(T_{2k}^{k}) \setminus \{a_0, a_1, e_1, \ldots, e_{k - m - 1} \}$ there exists $b \in \{0, 1\}$ such that
\begin{enumerate}[$(i)$]
\item $|e' \cap a_0 \cap a_{1}| \le m - 1$, 
\item $|e' \cap (a_b \setminus a_{1-b}) | = 0$, and 
\item $|e' \cap (a_{1-b} \setminus a_{b}) | = k - m$.
\end{enumerate}
\end{lemma}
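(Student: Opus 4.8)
The plan is to put the two given edges into convenient coordinates and then read off everything from explicit intervals of vertices. Write $T_{2k}^{k}$ on the vertex set $\{v_1,\dots,v_{2k}\}$ with natural edge labelling $f_j=\{v_j,v_{j+1},\dots,v_{j+k-1}\}$ for $1\le j\le k+1$ (note there are exactly $k+1$ edges). Relabelling if necessary, I would assume $a_0=f_s$ and $a_1=f_t$ with $1\le s<t\le k+1$. Then $a_0\cap a_1=\{v_t,\dots,v_{s+k-1}\}$, so $m=|a_0\cap a_1|=k-(t-s)$, i.e.\ $t-s=k-m$; also $a_0\setminus a_1=\{v_s,\dots,v_{t-1}\}$ and $a_1\setminus a_0=\{v_{s+k},\dots,v_{t+k-1}\}$, both of size $k-m$. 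The hypothesis $m\ge1$ is exactly $t-s\le k-1$, and the only ``boundary'' facts I will use are $s\ge 1$ and $t\le k+1$.

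For the first part I would take $e_i:=f_{t-i}$ for $1\le i\le k-m-1$; these are precisely the edges whose index lies strictly between $s$ and $t$, hence pairwise distinct and distinct from $a_0,a_1$. To verify (1)--(3) I would intersect $f_{t-i}=\{v_{t-i},\dots,v_{t-i+k-1}\}$ with each of the three sets above. Using $s<t-i$ (which holds since $i\le k-m-1<k-m=t-s$) and $t-i\le k<s+k$, one gets $f_{t-i}\cap(a_0\cap a_1)=\{v_t,\dots,v_{s+k-1}\}$ of size $m$, $f_{t-i}\cap(a_0\setminus a_1)=\{v_{t-i},\dots,v_{t-1}\}$ of size $i$, and $f_{t-i}\cap(a_1\setminus a_0)=\{v_{s+k},\dots,v_{t-i+k-1}\}$ of size $k-m-i$, which is exactly what is claimed. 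These are all routine interval-intersection computations.

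For the ``moreover'' part, the edges outside $\{a_0,a_1,e_1,\dots,e_{k-m-1}\}=\{f_s,f_{s+1},\dots,f_t\}$ are exactly the $f_j$ with $j\le s-1$ or $j\ge t+1$. If $j\le s-1$ I would take $b=1$: then $f_j$ ends at $v_{j+k-1}\le v_{s+k-2}$, which lies strictly before $a_1\setminus a_0$, so $f_j\cap(a_1\setminus a_0)=\emptyset$; on the other hand $j\le s$ and $j+k-1\ge k\ge t-1$ (here $t\le k+1$ is used), so $f_j\supseteq a_0\setminus a_1$ and $|f_j\cap(a_0\setminus a_1)|=k-m$; and $f_j\cap(a_0\cap a_1)=\{v_t,\dots,v_{j+k-1}\}$ has size $j+k-t\le (s-1)+k-t=m-1$. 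The case $j\ge t+1$ is the mirror image with $b=0$: $f_j$ starts at $v_j$ with $j>t-1$, so it misses $a_0\setminus a_1$; it contains all of $a_1\setminus a_0$ because $j\le k+1\le s+k$ and $j+k-1\ge t+k-1$; and it meets $a_0\cap a_1$ in $\{v_j,\dots,v_{s+k-1}\}$, of size $s+k-j\le m-1$. This yields $(i)$--$(iii)$ in both cases.

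The statement is not conceptually deep; the only thing to be careful about is which endpoint wins each $\min/\max$, and observing that the constraints $1\le s$ and $t\le k+1$ (forced by $T_{2k}^{k}$ having exactly $k+1$ edges) are precisely what guarantees that an edge $f_j$ with $j\notin[s,t]$ contains a full ``side'' $a_b\setminus a_{1-b}$. So the ``hard part'' is really just the bookkeeping, and I would organise the write-up by first recording $a_0\cap a_1$, $a_0\setminus a_1$, $a_1\setminus a_0$ as explicit vertex intervals and then reading off every required intersection from those three intervals.
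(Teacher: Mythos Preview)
Your proposal is correct and follows essentially the same approach as the paper: place the edges of $T_{2k}^k$ in a natural ordering $f_1,\dots,f_{k+1}$, normalise so that $a_0=f_s$ lies to the left of $a_1=f_t$ with $t-s=k-m$, take the $e_i$ to be precisely the edges $f_j$ with $s<j<t$, and observe that the remaining edges lie entirely to one side of the block $\{f_s,\dots,f_t\}$. The paper records the same choice (its $e_{k-m-j}=f_{i+j}$ is your $e_i=f_{t-i}$ after renaming) and simply asserts that properties 1.--3.\ and $(i)$--$(iii)$ are ``easy to see''; you have spelled out the interval-intersection bookkeeping explicitly, including the use of $s\ge 1$ and $t\le k+1$ to guarantee that an outside edge swallows a full side $a_b\setminus a_{1-b}$.
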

\begin{proof}
Let $f_1, \ldots, f_{k+1}$ be a natural ordering of the edges of $T_{2k}^k$. As $a_0$ and $a_1$ have exactly $m$ vertices in intersection it follows that there exists an index $i \in \{1, \ldots, m\}$ such that
$a_0 = f_i$. Since the reversed ordering of the edges (i.e.\  $f'_i = f_{k+1-i}$) is also a natural ordering, we can assume that $a_0$ is `to the left' of $a_1$ and thus 
 $a_1 = f_{i  + k - m}$. It is easy to see that the set of edges $e_{k - m - j} = f_{i+j}$ (for $1 \le j \le k  - m - 1$) satisfies properties $1.$--$3$. Consequently, each edge $e' \in E(T_{2k}^k) \setminus \{a_0, a_1, e_1, \ldots, e_{k - m - 1}\} $ has to correspond to an edge from either $\{f_1, \ldots, f_{i-1}\}$ or $\{f_{i + k - m + 1}, \ldots, f_{k + 1}\}$. In both of these cases it is easy to see that properties $(i)-(iii)$ hold.
\end{proof}

\begin{claim}
\label{claim:tl_comp_set}
The largest $(k-2)$-intersecting set in $T^{k}_{k + \ell - 1}$ is of size $\lceil \ell / 2 \rceil$.
\end{claim}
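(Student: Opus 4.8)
The plan is to reduce Claim~\ref{claim:tl_comp_set} to the elementary fact that the largest subset of $\{1,\dots,\ell\}$ containing no two consecutive integers has size $\lceil \ell/2\rceil$. First I would fix notation: write the vertices of $T^k_{k+\ell-1}$ as $v_1,\dots,v_{k+\ell-1}$ and its edges as $f_1,\dots,f_\ell$, where $f_i=\{v_i,v_{i+1},\dots,v_{i+k-1}\}$; there are exactly $(k+\ell-1)-k+1=\ell$ of them. Computing the overlap of two length-$k$ intervals of consecutive vertices gives, for $i\neq j$,
$$
  |f_i\cap f_j| = \max\{0,\; k-|i-j|\}.
$$

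Next I would translate the $(k-2)$-intersecting condition into a condition on indices. Recalling that a set of edges is $(k-2)$-intersecting if no two of its edges share more than $k-2$ vertices, and using $k\ge 2$ (indeed $k\ge 3$ throughout this subsection), one checks that $\max\{0,k-|i-j|\}\le k-2$ holds precisely when $|i-j|\ge 2$: two consecutive edges $f_i,f_{i+1}$ share $k-1>k-2$ vertices and can never both lie in a $(k-2)$-intersecting set, whereas any two edges at index-distance at least $2$ (including disjoint ones, which have $|i-j|\ge k\ge 2$) automatically satisfy the bound. Hence a set of edges $\{f_i:i\in I\}$ with $I\subseteq\{1,\dots,\ell\}$ is $(k-2)$-intersecting if and only if $I$ contains no two consecutive integers.

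It then remains only to determine the maximum size of such an $I$. For the lower bound, the set of odd indices in $\{1,\dots,\ell\}$ has $\lceil \ell/2\rceil$ elements and no two consecutive ones, and the corresponding set of edges is $(k-2)$-intersecting. For the upper bound, partition $\{1,\dots,\ell\}$ into the $\lceil \ell/2\rceil$ blocks $\{1,2\},\{3,4\},\dots$ (the last a singleton if $\ell$ is odd); any $I$ with no two consecutive integers meets each block in at most one element, so $|I|\le\lceil \ell/2\rceil$. Combining the two bounds yields the claim. I do not expect a genuine obstacle here; the only points that need a little care are the interval-overlap computation $|f_i\cap f_j|=\max\{0,k-|i-j|\}$ and verifying that the disjoint case $|i-j|\ge k$ is subsumed by $|i-j|\ge 2$, so that the condition collapses exactly to ``no two consecutive edges.''
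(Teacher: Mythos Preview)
Your proof is correct and follows essentially the same approach as the paper: both exhibit the odd-indexed edges as a $(k-2)$-intersecting set of size $\lceil \ell/2\rceil$, and both obtain the upper bound from the observation that consecutive edges share $k-1$ vertices (you via the block partition, the paper via the equivalent pigeonhole statement that more than $\lceil \ell/2\rceil$ edges force two consecutive ones). Your explicit computation of $|f_i\cap f_j|$ is a bit more detailed than what the paper writes, but the argument is the same.
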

\begin{proof}
Let $\{e_1, \ldots, e_\ell\}$ be a natural order of the edges of $T^{k}_{k + \ell -1}$. Observe that the set
$$
	\{e_1, e_3, \ldots, e_{2 \lceil \frac{\ell}{2} \rceil - 1} \}
$$
is $(k-2)$-intersecting and has size $\lceil \ell / 2 \rceil$. Any set of more than $\lceil \ell / 2 \rceil$ edges contains two edges $e_i$ and $e_{i+1}$, for some $i \in \{1 ,\ldots, \ell -1\}$. Such a set can not be $(k-2$)-intersecting as by definition $|e_i \cap e_{i+1}| = k - 1 $.
\end{proof}

We are now ready to prove Lemma \ref{lemma:rdo_lge5}.

\begin{proof}[Proof of Lemma \ref{lemma:rdo_lge5}]
If $H$ does not contain $T_{2k}^k$ then $S := \emptyset$ satisfies  the required properties. Otherwise, from Claim \ref{claim:tl_comp_set} we have that $H$ contains a $(k-2)$-intersecting set $S$ of size $\lceil (k + 1) / 2 \rceil$. Let $H' := H \setminus S$ and note that $H'$ has at most
\begin{equation}
\left \lceil \frac{3}{2} (k + 1) \right \rceil - \lceil (k + 1) / 2 \rceil = k + 1 \label{eq:H_num_edges}
\end{equation}
edges. If $H'$ is not isomorphic to $T_{2k}^k$ then $S$ satisfies the required properties. Otherwise, let $E(H') = \{h_1, \ldots, h_{k+1}\}$ be a natural order of the edges of $H'$ and label the vertices $V(H') = \{v_1, \ldots, v_{2k} \}$  such that
 $$
 h_i = \{v_i, \ldots, v_{i + k - 1}\}
 $$
for all $1 \le i \le k + 1$. Note that 
$$
	C_1 := \{h_1 , h_3, \ldots, h_{2 \lceil \frac{k + 1}{2} \rceil - 1}\} \quad \text{and} \quad 
	C_2 := \{h_2, h_4, \ldots, h_{2 \lfloor \frac{k+1}{2} \rfloor}\}
$$
are $(k-2)$-intersecting sets of size $|C_1| = \lceil \frac{k + 1}{2} \rceil$ and $|C_2| = \lfloor \frac{k + 1}{2} \rfloor$. We show that $H \setminus C_i$ does not contain $T_{2k}^k$ for some $i \in \{1, 2\}$.

Let us assume towards a contradiction that this is not the case and let $F_1$ and $F_2$ denote arbitrarily chosen copies of $T_{2k}^{k}$ in $C_1 \cup S$ and $C_2 \cup S$, respectively. Since $|C_1 \cup S| \leq k + 2$ there can be at most one edge in $C_1 \cup S$ which is not part of $F_1$.

We first show that $h_1 \in E(F_1)$ implies $h_3 \in E(F_1)$. Let us assume towards the contradiction that $h_1 \in E(F_1)$ and $h_3 \notin E(F_1)$. Since $F_1$ contains all but at most one edge in $C_1 \cup S$, this implies that a subgraph induced by edges 
$ S \cup C_1 \setminus \{h_3\}$ is isomorphic to $T_{2k}^{k}$. Let $E(F_1) = \{f_1, \ldots, f_{k+1} \}$ be a natural order of the edges of $F_1$ and let $j$ be such that $f_j = h_1$. We can assume that $j \leq \lceil k /2 \rceil$ as otherwise we can just reverse the order of the edges. Since $j + 2 \le k$ for $k \ge 5$, the edges $f_{j+1}$ and $f_{j+2}$ are such that $$
|f_{j+1} \cap f_{j+2}| = k - 1 , \quad
|f_{j+1} \cap f_j| = k -1, \quad and \quad 
|f_{j+2} \cap f_j| = k -2.
$$
However, no edge from $C_1 \setminus \{h_1, h_3\}$ intersects $h_1$ on more then $k - 4$ vertices, which implies that both $f_{j+1}$ and $f_{j+2}$ are contained in $S$. As $S$ is a $(k-2)$-intersecting set and 
$|f_{j+1} \cap f_{j+2}| = k - 1$, this gives a contradiction. To conclude, we showed that if $h_1 \in E(F_1)$ then $h_3 \in E(F_1)$.

Note that the previous observation together with the fact that $F_1$ contains all but at most one edge in $C_1 \cup S$ implies that either $\{h_1, h_3\} \subseteq E(F_1)$ or $\{h_3, h_5\} \subseteq E(F_1)$ (or both).
We only consider the first case as the latter follows by a symmetric argument.
From Lemma \ref{lemma:core_l_r} with $a_0 = h_1$ and $a_1 = h_3$ (and $m = k - 2$) we conclude that there exists an edge $x \in E(F_1)$ such that 
\begin{equation}
\label{eq:xobs}
|x \cap h_1 \cap h_3| = k - 2, \quad
|x \cap (h_1 \setminus h_3)| = 1 \quad \text{and} \quad
|x \cap (h_3 \setminus h_1)| = 1.
\end{equation}
Therefore we have $|x \cap h_1| = k - 1$ and as $C_1$ is a $(k-2)$-intersecting set we have $x \in S$.

Next, let us look at $F_2$. As $|C_2| = \lfloor (k+1) / 2 \rfloor$ we have $|S \cup C_2| \le k + 1$. Therefore, $E(F_2) = S \cup C_2$ and from $x \in S$ we conclude $x \in E(F_2)$. Using \eqref{eq:xobs} we obtain
\begin{align*}
& |x \cap h_2 \cap h_4| = k - 3 + I(v_{k+1}), \\
& |x \cap (h_2 \setminus h_4)| = 1 + I(v_2), \quad \text{and} \quad \\
& |x \cap (h_4 \setminus h_2)| =  I(v_{k + 2}) + I(v_{k + 3}),
\end{align*}
where 
$$
I(v) := 
\begin{cases}
1 & v \in x, \\ 
0 & \text{otherwise.}
\end{cases}
$$
Observe that by \eqref{eq:xobs} we also have $V(x) \subseteq V(h_1) \cup V(h_3)$ and since $v_{k + 3} \notin V(h_1) \cup V(h_3)$ we get $I(v_{k + 3}) = 0$.
Moreover, as $|x \cap (h_3 \setminus h_1)| = 1$ and $h_3 \setminus h_1 = \{v_{k+1}, v_{k+2}\}$ we conclude 
$$
	I(v_{k+1}) = 1 - I(v_{k + 2}).
$$
These observations imply that there are only two possibilities:
\begin{itemize}
\item $|x \cap h_2 \cap h_4| = k - 2$ and $|x \cap (h_4 \setminus h_2)| = 0$, or
\item $|x \cap h_2 \cap h_4| = k - 3$, $
|x \cap (h_4 \setminus h_2)| = 1 $ and $|x \cap (h_2 \setminus h_4)|  \geq 1.$
\end{itemize}
By applying Lemma \ref{lemma:core_l_r} with $a_0 = h_2$ and $a_1 = h_4$ (and $m = k - 2$) one can see that 
no edge from $F_2 \setminus \{h_2, h_4\}$ can satisfy either of the two possibilities. This gives a contradiction with the assumption that both $H \setminus C_1$ and $H \setminus C_2$ contain $T_{2k}^k$, which concludes the proof of the lemma.
\end{proof}

%!TEX root = hypergraphs.tex
\subsection {Property $(iii)$}
In this subsection we prove that property $(iii)$ (the asymmetric-balancedness) from Theorem \ref{thm:asymmetric-0} holds for $F_1 = \HFi$ and $F_2 = \HFii$. For convenience of the reader, we state it as a lemma.

\begin{lemma}\label{lemma:prop_c}
 The family $\mathcal{C}^*=\cF^*(\HFi,\HFii)$ is
  asymmetric-balanced for every $k \ge 4$ (see Definition \ref{def:asymmetric:balanced}).
\end{lemma}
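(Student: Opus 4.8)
The goal is to verify the two asymmetric-balancedness conditions for the family $\mathcal{C}^* = \cF^*(K_3^{+k}, C_{t_k})$. Recall $m_k(F_1,F_2) = \theta = (3t_k-3)/(2t_k-k-1)$, where $F_1 = K_3^{+k}$ and $F_2 = C_{t_k}$. Every $F^* \in \mathcal{C}^*$ is built from a copy of $C_{t_k}$ together with copies of $K_3^{+k}$ glued onto all but one edge (the attachment edge $e_0$). I would first record the parameters of a generic $F^*$: since $K_3^{+k}$ has $k+1$ vertices, $3$ edges, and shares exactly one edge ($k$ vertices) with the cycle when generically attached, each petal contributes $1$ new vertex and $2$ new edges. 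So a generic $F^*$ has $v(F^*) = t_k + (t_k - 1)$ and $e(F^*) = t_k + 2(t_k-1) = 3t_k - 2$. One then checks $(e(F^*)-1)/(v(F^*)-k) = (3t_k-3)/(2t_k-1-k) = \theta$, which already confirms that deleting a single attachment edge from a generic $F^*$ gives ratio exactly $\theta$ — this is the tight case in condition (2), so the ``generic + single attachment edge'' configuration is exactly the extremal one.

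**Key steps.** For condition (1), I would take an arbitrary $H \subsetneq F^*$ with $V(H) \subsetneq V(F^*)$ containing an attachment edge, and bound $(e(F^*)-e(H))/(v(F^*)-v(H)) \ge \theta$ from below. The natural approach is to analyze $F^* \setminus H$ edge-petal by edge-petal: classify the cycle-edges and petal-edges of $F^*$ according to whether they lie in $H$. Because $C_{t_k}$ is strictly $k$-balanced and $K_3^{+k}$ is strictly balanced with respect to $m_k(\cdot, C_{t_k})$, removing a partial piece of either the cycle or a petal only increases the relevant density ratio; the worst case is when $H$ is as ``efficient'' as possible at covering edges per vertex, and one shows this is achieved (with equality $\theta$) precisely when $F^*$ is generic and $H$ is a single attachment edge. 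Concretely I expect the bookkeeping to go: if $H$ contains $a$ full cycle-edges (including the attachment edge), $b$ fully-absorbed petals, and some partial overlaps, then count the new vertices and edges contributed by $F^* \setminus H$ and compare to $\theta$ times the vertex count, using the integrality of $t_k$ and the specific values $t_4=8$, $t_5=14$, $t_k=k^2$ for $k\ge 6$ to close the inequality. For condition (2), the case analysis should show that equality forces $F^*$ generic (no petal overlaps, no extra cycle-vertex identifications) and $H$ to be a single attachment edge, which is exactly what the computation above identifies as extremal.

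**Main obstacle.** The hard part is a clean case analysis of how $H$ can interact with a \emph{non-generic} $F^*$: petals may share inner vertices, petals may intersect the cycle on more than one edge, or the cycle copy $C_{t_k}$ may itself be ``degenerate'' in how it sits inside $F^*$. One must show that each such degeneracy strictly \emph{helps} the inequality (makes the ratio $> \theta$), so that the extremal configuration is forced to be generic. This requires carefully exploiting strict $k$-balancedness of $C_{t_k}$ and strict balancedness of $K_3^{+k}$ with respect to $m_k(\cdot, C_{t_k})$ — essentially, every proper sub-piece of a petal or of the cycle has density strictly below the threshold, so any ``shared'' structure is paid for in vertices at a rate that pushes the overall ratio up. A secondary subtlety is handling $H$ that slices across a petal without containing the whole petal: here the removed portion $F^* \setminus H$ contains a non-trivial subgraph of $K_3^{+k}$ attached along an edge, and one invokes the strict inequality $(e(K_3^{+k})-e(S))/(v(K_3^{+k})-v(S)) > $ (appropriate bound) for proper $S \subsetneq K_3^{+k}$. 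I would organize the proof by first disposing of the generic case by direct computation, then arguing that each departure from genericity can only increase the ratio, with the numerics $t_4=8,t_5=14,t_k=k^2$ invoked only to verify the handful of borderline inequalities that arise.
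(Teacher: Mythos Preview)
Your computation of the generic parameters and identification of the extremal configuration (generic $F^*$, $H$ equal to the single attachment edge) is correct. However, the plan for the general inequality has a genuine gap, and the suggested reduction ``non-generic only helps'' is not how the paper proceeds and is not obviously salvageable.

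The difficulty is that your proposed tools --- strict $k$-balancedness of $C_{t_k}$ and strict balancedness of $K_3^{+k}$ with respect to $m_k(\cdot,C_{t_k})$ --- are density upper bounds on \emph{subgraphs}, whereas what you need is a density \emph{lower} bound on $(e(F^*)-e(H))/(v(F^*)-v(H))$. These do not translate into each other by a petal-by-petal decomposition: when $H$ cuts across a petal or the cycle, the ``removed'' piece is not a subgraph of $K_3^{+k}$ or $C_{t_k}$ attached along a single edge, so the balancedness inequalities do not apply in the direction you need. Moreover, in a non-generic $F^*$ both the vertex and edge counts drop, and there is no monotonicity principle ensuring the ratio moves the right way; your claim that ``each departure from genericity can only increase the ratio'' would itself require essentially the full argument.

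The paper's approach is quite different and does not separate generic from non-generic. It works directly with the set $V(C^*)\setminus V(H)$, partitioned into the missing cycle vertices $X\subseteq V(C_{t_k}^*)$ and the missing ``apex'' vertices $S_C$ outside the cycle, and lower-bounds $e(C^*)-e(H)$ by counting edges of $C^*$ incident to $X\cup S_C$. The contribution from $S_C$ is controlled by an easy claim (each missing apex forces at least two missing edges, suitably counted). The real work is a technical counting claim for the edges incident to $X$: one must show roughly $\tfrac{3}{2}(|X|+k-1)$ such edges exist, which requires a careful analysis of how many distinct petals $F^i$ can share a single non-cycle edge (at most three, and only in a very constrained configuration). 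These structural facts about overlapping $K_3^{+k}$-petals are the crux; your outline does not anticipate them. Once that count is in hand, the ratio is bounded below by $\tfrac{3}{2}+\tfrac{3(k-1)}{2(|S_C|+|X|)}$, and comparing with $\theta=\tfrac{3}{2}+\tfrac{3(k-1)}{2(2t_k-k-1)}$ using only $|X|\le t_k-k$, $|S_C|\le t_k-1$ and $t_k\ge 2k$ finishes the proof --- the specific values $t_4=8$, $t_5=14$, $t_k=k^2$ are not needed here.
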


Consider some graph $C^*\in\cC^*$. Recall that by the definition of $\mathcal{F}^*(K_3^{+k}, \HFii)$ there exists a subgraph $\HFii^* \subseteq C^*$ isomorphic to $\HFii$ and an ordering $e_0, \ldots, e_{t_k -1}$ of the edges of $\HFii^*$ such that for each $1 \le i \le t_k - 1$ there exists a subgraph $F^i \subseteq C^*$ which contains an edge $e_i$ and is isomorphic to $\HFi$. Recall that we refer to the edge $e_0$ as an \emph{attachment} edge. For simplicity, we assume that the vertices of $\HFii^*$ are labeled with numbers $\{0, \ldots, t_k - 1\}$ such that $e_i=\{i,i+1,\ldots,k-1+i\}$ (where all additions are modulo $t_k$) for $0 \le i \le t_k - 1$.  Moreover, let $v_i$ denote the vertex such that $e_i\cup\{v_i\}=V(F^i)$ (for $1 \le i \le t_k - 1$). Notice that $v_i$s need not be distinct.

We make a few observations that will lead us to a crucial calculation.

\begin{claim}\label{claim:outside_vertices}
Let $I\subseteq \{i\colon v_i\not\in V(\HFii^*)\}$ and $S=\{v_i\colon i\in I\}$. Then the number of edges from $C^*$ incident to $S$ is at least $|I|+|S|$.
%Let $j$ be the number of copies $F^i$ such that $v_i$ is an outside vertex and let $s$ be the number of different outside vertices, i.e.\ $j:=|\{i\colon v_i\not \in V(C^{(k,k-1)}_t)\}|$ and $s:=|\{v_i\colon v_i\not\in V(C^{(k,k-1)}_t)\}|$. 
%Then $s\le j$ and the number of edges of $C^*$ incident to the outside vertices is at least $j+s$.
\end{claim}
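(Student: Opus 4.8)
The plan is to reduce the statement to a per-vertex estimate and then prove that estimate with a short bipartite-graph argument that exploits the structure of the tight cycle $\HFii$. Throughout I work with the fixed decomposition data of $C^*$: the cycle copy $\HFii^*$ with its labelled edges $e_0,\dots,e_{t_k-1}$ (where $e_i=\{i,\dots,i+k-1\}$ modulo $t_k$), the petals $F^1,\dots,F^{t_k-1}$, and the apex vertices $v_1,\dots,v_{t_k-1}$ with $V(F^i)=e_i\cup\{v_i\}$. Since $F^i\cong \HFi$ has only $k+1$ vertices, writing $e_i=W_i\cup\{x_i,y_i\}$ with $|W_i|=k-2$ the three edges of $F^i$ are $e_i$, $(e_i\setminus\{x_i\})\cup\{v_i\}$ and $(e_i\setminus\{y_i\})\cup\{v_i\}$; in particular $v_i$ lies in exactly two edges of $F^i$, each of the form $(e_i\setminus\{z\})\cup\{v_i\}$ with $z\in e_i$.

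First I would establish two bookkeeping facts. Because $e_i\subseteq V(\HFii^*)$ for all $i$ but $v_i\notin V(\HFii^*)$ for $i\in I$, any $w\in S$ lies outside $\HFii^*$ and lies in $V(F^j)$ only when $v_j=w$; hence every edge of $C^*$ through $w$ is one of the two petal-edges through $v_j$ of some $F^j$ with $v_j=w$ (it cannot equal $e_j$, which misses $w$), and such an edge has $w$ as its only vertex outside $V(\HFii^*)$. Consequently each edge of $C^*$ meets $S$ in at most one vertex, so the edges of $C^*$ incident to $S$ split according to which vertex of $S$ they contain; writing $d(w)$ for the number of edges of $C^*$ through $w$ and $I_w=\{i\in I:v_i=w\}$ (the sets $I_w$, $w\in S$, partition $I$), the number of edges incident to $S$ equals $\sum_{w\in S}d(w)$. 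So it suffices to prove $d(w)\ge |I_w|+1$ for each $w\in S$, and then sum.

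For the per-vertex bound I would fix $w\in S$, put $m=|I_w|$, and form the bipartite graph $B$ with parts $I_w$ and ``edges of $C^*$ through $w$'', joining $i$ to $f$ iff $f\in E(F^i)$. Every $i\in I_w$ has degree exactly $2$, so $e(B)=2m$ and already $d(w)\ge m$. The key point is that every edge $f$ through $w$ has degree at most $2$ in $B$: if $f\in E(F^i)\cap E(F^j)$ with $i\ne j$ then $f\setminus\{w\}$ is a $(k-1)$-element subset of $e_i\cap e_j$, forcing $|e_i\cap e_j|\ge k-1$, and since $t_k\ge 2k$ two distinct edges of $\HFii$ meet in $k-1$ vertices only when they are consecutive on $\ZZ_{t_k}$; moreover then $f\setminus\{w\}$ must equal the $(k-1)$-set $e_i\cap e_j$, so $f$ is determined by the unordered pair $\{i,j\}$, and only those two indices can be incident to $f$. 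Now assume for contradiction $d(w)=m$: then $B$ is $2$-regular, hence a disjoint union of cycles; take one, $i_1,f_1,i_2,\dots,i_r,f_r,i_1$ with distinct $i_t$. Each $f_t\in E(F^{i_t})\cap E(F^{i_{t+1}})$ (indices mod $r$) makes $i_t,i_{t+1}$ consecutive on $\ZZ_{t_k}$, and the determinacy of $f_t$ by $\{i_t,i_{t+1}\}$ forces these pairs to be pairwise distinct, so in particular $r\ne 2$; hence $i_1,\dots,i_r$ is a genuine cycle in the graph $\HFii$, which forces $r=t_k$, contradicting $\{i_1,\dots,i_r\}\subseteq I\subseteq\{1,\dots,t_k-1\}$. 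Thus $d(w)\ge m+1$, and summing gives $\sum_{w\in S}d(w)\ge\sum_{w\in S}(|I_w|+1)=|I|+|S|$.

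The main obstacle — and the only place anything beyond bookkeeping is used — is the right-degree bound in $B$ together with the ensuing ``no short index-cycle'' argument: this is exactly where the choices $t_4=8$, $t_5=14$, $t_k=k^2$ matter, since one needs both $t_k\ge 2k$ (so that consecutive edges are the only pairs of cycle-edges sharing $k-1$ vertices) and that there are only $t_k-1$ petals (so the cyclic index sequence $i_1,\dots,i_r$ cannot wrap all the way around $\ZZ_{t_k}$). I would double-check the corner case $t_k=2k$ (which occurs for $k=4$) explicitly, verifying $|e_a\cap e_b|\le k-2$ for every non-consecutive pair $a,b$ on $\ZZ_{t_k}$, and noting that a $(k-1)$-interval is contained in exactly two $k$-intervals, namely the two consecutive cycle-edges; everything else is routine structure of $\HFi$ and $\HFii$.
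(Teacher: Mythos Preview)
Your proof is correct and follows essentially the same approach as the paper: both reduce to the per-vertex estimate $\deg_{C^*}(w)\ge |I_w|+1$ via the key fact that two petals $F^i,F^j$ can share an edge through $w$ only when $i,j$ are consecutive on $\ZZ_{t_k}$, with the missing index $0$ blocking any full wrap-around. Your $2$-regular bipartite graph formulation is a clean variant of the paper's increasing-order counting of contributions.
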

\begin{proof}
 %The inequality $s\le j$ is immediate. 
 Set $s:=|S|$ and  let $u_1$, \ldots, $u_s$ be the  vertices of $S$. Since every edge in $C^*$ contains at most one vertex in $S$, we are interested in estimating  $\sum_{j=1}^s \deg_{C^*} (u_j)$.  For every  vertex $u_j$ we denote by $W(j)$ the set of the indices $i\in I$ such that  $F^i$ contains $u_j$ and we set $w(j):=|W(j)|$. Observe that the sets $W(j)$ partition $I$.

Consider two distinct indices $i_1$ and $i_2$ from $W(j)$, for some $1 \le j \le s$. First, note that $F^{i_1}$ and $F^{i_2}$ have at most one edge in intersection. Otherwise we would have $V(F^{i_1}) = V(F^{i_2})$ which implies $e_{i_1} = e_{i_2}$, contradicting $i_1 \neq i_2$. Moreover, if $F^{i_1}$ and $F^{i_2}$ share an edge then, because any two edges of $K^{+k}_3$ have intersection of size $k-1$, we necessarily have that $e_{i_1}$ and $e_{i_2}$ are consecutive edges, i.e. $|i_1 - i_2| = 1$ (note that $0 \notin W(j)$). 

Let $W(j) = \{j_1, \ldots, j_q\}$ where $j_i \le j_{i+1}$. Using the previous observation we estimate $\deg_{C^*}(u_j)\ge w(j)+1$ by counting the contribution of each $F^{j_i}$ in the increasing order. In particular, $F^{j_1}$ contributes two edges and every further $F^{j_i}$ at least one new edge. This is clearly true for $i < q$. If $F^{j_q}$ does not contribute any new edge then from the previous observation we conclude that $e_{j_{q - 1}}, e_{j_q}$ and $e_{j_1}$ are consecutive edges. This, however, cannot be because $0 \notin W(j)$. To conclude, we obtain
$$
	\sum_{j=1}^s \deg_{C^*} (u_j)\ge s+\sum_{j=1}^s w(j)=|S|+|I|.
$$
\end{proof}

\begin{claim}\label{claim:three_copies}
Suppose an edge $e \in C^*$ belongs to $F^i, F^j$ and $F^\ell$, for some $1 \le i < j < \ell \le t_k - 1$. Then the edges $e_i$, $e_j$ and $e_\ell$ 
are consecutive (i.e. $j = i + 1$ and $\ell = i + 2$) and
$$
	e = \{i, i + 2, \ldots, i + k\} \; \text{ or } \; e = \{i + 1, \ldots, i + k - 1, i + k + 1\} \; \text{ or } \; e = e_{i+1}.
$$ 
In particular, we have $e \subseteq V(\HFii^*)$.
\end{claim}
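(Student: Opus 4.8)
The plan is to exploit the rigidity of the copies $F^m\cong\HFi$. Recall that $\HFi$ has $k+1$ vertices and exactly three edges, any two of which meet in $k-1$ vertices; since $F^m$ contains $e_m$ and has vertex set $e_m\cup\{v_m\}$ with $v_m\notin e_m$, its three edges are $e_m$ together with two further edges, each meeting $e_m$ in exactly $k-1$ vertices and having $v_m$ as its unique vertex outside $e_m$. Call these two further edges the \emph{derived} edges of $F^m$ (and $e_m$ its \emph{base} edge). The whole argument then reduces to elementary analysis of how intervals of length $k$ intersect on the cycle $\ZZ_{t_k}$, using that $t_k$ is large (for all relevant values, $t_k\ge 8$) so these intervals behave as in the non-wrapping picture.

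First I would observe that at most one of $F^i,F^j,F^\ell$ can contain $e$ as its base edge, because the cycle edges $e_0,\dots,e_{t_k-1}$ are pairwise distinct; hence $e$ is a derived edge in at least two of the three copies. Next come two distance bounds. If $e$ is a derived edge of both $F^p$ and $F^q$, then $e\setminus(e_p\cap e_q)\subseteq\{v_p,v_q\}$, so $|e_p\cap e_q|\ge k-2$, which for two cycle edges forces their cyclic distance to be at most $2$. If instead $e=e_p$ while $e$ is a derived edge of $F^q$, then $e_p=(e_q\setminus\{u\})\cup\{v_q\}$ for some $u\in e_q$, so $|e_p\cap e_q|=k-1$, forcing cyclic distance exactly $1$. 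Feeding these into the hypothesis $1\le i<j<\ell\le t_k-1$ — which rules out configurations wrapping through the vertex $0$ — a short count of the three gaps around the cycle shows $\{i,j,\ell\}$ must be three consecutive integers, i.e.\ $j=i+1$ and $\ell=i+2$; moreover, if one of the three copies uses $e$ as a base edge, the distance-$1$ constraint forces it to be the middle one $F^{i+1}$, so $e=e_{i+1}$.

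It remains to identify $e$ when it is a derived edge of all of $F^i,F^{i+1},F^{i+2}$. With $e_i=\{i,\dots,i+k-1\}$, $e_{i+1}=\{i+1,\dots,i+k\}$ and $e_{i+2}=\{i+2,\dots,i+k+1\}$, I would split on whether $i\in e$. If $i\notin e$ then $e=\{i+1,\dots,i+k-1\}\cup\{v_i\}$, and requiring $|e\cap e_{i+1}|=|e\cap e_{i+2}|=k-1$ forces $v_i=i+k+1$, so $e=\{i+1,\dots,i+k-1,i+k+1\}$. If $i\in e$ then $i$ is the unique vertex of $e$ lying outside $e_{i+1}$, hence $e=\{i,\dots,i+k\}\setminus\{u\}$ for some $u\in e_{i+1}$; the choice $u=i+k$ is impossible since it would give $e=e_i$, so $u\in\{i+1,\dots,i+k-1\}$, and then $|e\cap e_{i+2}|=k-1$ forces $u=i+1$, i.e.\ $e=\{i,i+2,\dots,i+k\}$. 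In every case $e\subseteq\{i,\dots,i+k+1\}\subseteq V(\HFii^*)$, which in particular gives the last assertion of the claim.

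I expect the only real friction to be the modular bookkeeping: one must verify carefully that the range restriction $i\ge1$, $\ell\le t_k-1$ together with $t_k$ being large genuinely excludes three pairwise-close cycle edges from ``wrapping around'' the cycle, and that all the small intersections $e_p\cap e_q$ are exactly those of the corresponding non-wrapping intervals. Once these points are pinned down, every implication above is an immediate consequence of the $(k+1)$-vertex, three-edge structure of $\HFi$.
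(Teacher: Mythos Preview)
Your proposal is correct and follows essentially the same approach as the paper: both arguments use that any two edges of $\HFi$ meet in $k-1$ vertices to force $|e_p\cap e_q|\ge k-2$ for each pair among $\{i,j,\ell\}$, deduce consecutiveness on the cycle (using $t_k\ge 8$ and the range restriction to rule out wrap-around), and then pin down $e$ explicitly. The only cosmetic differences are that the paper gets the pairwise bound directly from $|e\cap e_m|\ge k-1$ without introducing the base/derived dichotomy, and in the final identification it first shows $e_i\cap e_{i+2}\subseteq e$ and then analyses the remaining two vertices, whereas you split on whether $i\in e$; both case analyses are equally short.
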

\begin{proof}
  Since any two edges of $K^{+k}_3$ have intersection of size $k-1$ it follows that $e_i$, $e_j$ and $e_\ell$ must pairwise intersect in at least $k-2$ vertices. This is only possible if these edges are three consecutive edges on the cycle $\HFii^*$.

  If $e \in E(\HFii^*)$ then from $|e \cap e_{i + j}| \ge k - 1$ for $j \in \{0, 1, 2\}$ we conclude $e = e_{i+1}$. Next, suppose that $e \not\in E(\HFii^*)$. Note that then $e$ has to contain $e_i \cap e_{i+2}$. This can be seen as follows: from $|e \cap e_i| = |e \cap e_{i+2}| = k - 1$ we have $|e \cap (e_i \cap e_{i+2})| \ge k - 3$ (i.e. $e$ can `miss' at most one vertex from $e_i \cap e_{i+2}$). However, if $|e \cap (e_i \cap e_{i+2})| = k - 3$ then necessarily $e_i \setminus e_{i+2} \subset e$ and $e_{i+2} \setminus e_i \subset e$ which implies $|e| \ge k + 1$, thus a contradiction.  Consequently, exactly one vertex from $\{i, i + 1\}$ and one vertex from $\{i + k, i + k + 1\}$ belongs to $e$. Moreover, from $|e \cap e_{i+1}| = k - 1$ we deduce $\{i, i + k + 1\} \not \subset e$ and from $e \not \in E(\HFii^*)$ we have $\{i + 1, i + k\} \not\subset e$ (as otherwise $e = e_{i+1}$). This leads to the two remaining shapes of $e$.
\end{proof}

Given $j\in[t_k-1]$ such that $V(F^j) \subseteq V(\HFii^*)$, we denote by $E^j$ the set of the non-cycle edges from $F^j$, i.e. 
%For $1 \le j \le t_k - 1$ such that $V(F^j) \subseteq V(\HFii^*)$ we denote by $E^j$ the set of the non-cycle edges from $F^j$ (and otherwise $E^j = \emptyset$ \NS{How can $E^j$ be empty set? There are no three cycle edges such that any two intersect on $k-1$ vertices.}), i.e.
$$
	E^j:= \{e \in E(F^j) \; : \; e \not\in E(\HFii^*) \; \text{ and } \; e\subseteq V(\HFii^*)\}.
$$
%\NS{Should the previous definition of $E^j$ also include that $V(F^J) \subseteq V(\HFii^*)$?}
\begin{claim}\label{claim:structure_three_copies}
If an edge $e\not\in \HFii^*$ belongs to $F^i, F^{i+1}$ and $F^{i+2}$ then there is an edge $e'$ in $E^i \cup E^{i+1} \cup E^{i+2}$ such that $\{i, i + k + 1\} \subseteq e'$ and $e'$ belongs to at most two  hypergraphs $F^j$. 
\end{claim}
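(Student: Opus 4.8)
The plan is to invoke Claim~\ref{claim:three_copies} twice --- first to determine the shape of $e$, then in the opposite direction to bound the number of copies $F^j$ that contain $e'$ --- with a crude arc-length estimate as the common tool. For a set $A$ of vertices of $\HFii^*$ write $\omega(A)$ for the least number of cyclically consecutive vertices of $\HFii^*$ covering $A$. A cycle edge $e_j$ has $\omega(e_j)=k$, and each of the three shapes appearing in Claim~\ref{claim:three_copies} --- namely $\{m,m+2,\ldots,m+k\}$, $\{m+1,\ldots,m+k-1,m+k+1\}$ and $e_{m+1}$ --- has $\omega\le k+1$, since each lies in an arc of $k+1$ consecutive vertices.

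First I would pin down $e$. Since $e\in C^*$ lies in $F^i$, $F^{i+1}$ and $F^{i+2}$ and $e\notin E(\HFii^*)$, Claim~\ref{claim:three_copies} applied to the consecutive triple $i<i+1<i+2$ leaves only two possibilities: $e=\{i,i+2,i+3,\ldots,i+k\}$ (Case~A) or $e=\{i+1,i+2,\ldots,i+k-1,i+k+1\}$ (Case~B), the option $e=e_{i+1}$ being excluded by hypothesis. These are mirror images of each other; in Case~A one works inside $F^{i+2}$ and in Case~B inside $F^i$, so it is enough to treat Case~A. There, since $e$ and $e_{i+2}$ are distinct edges of $F^{i+2}\cong\HFi$ they meet in exactly $k-1$ vertices, so $V(F^{i+2})=e\cup e_{i+2}=\{i\}\cup\{i+2,i+3,\ldots,i+k+1\}$ has $k+1$ vertices. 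Every edge of $\HFi$ is $V(F^{i+2})$ with one of its three pairwise distinct apex vertices deleted; here $e_{i+2}$ deletes $v_{i+2}=i$ and $e$ deletes the other new vertex $i+k+1$, so the third edge $e'$ of $F^{i+2}$ deletes some $\mu\in\{i+2,\ldots,i+k\}$. Hence $e'=\{i,i+2,i+3,\ldots,i+k+1\}\setminus\{\mu\}$: it lies in $V(\HFii^*)$, contains both $i$ and $i+k+1$, and $\omega(e')=k+2$, since it is the full arc $\{i,i+1,\ldots,i+k+1\}$ with two interior vertices removed and, as $t_k\ge k+4$ for all the sizes in question ($t_4=8$, $t_5=14$, $t_k=k^2$), no shorter arc can cover it. The mirror computation in Case~B produces $V(F^i)=\{i,i+1,\ldots,i+k-1,i+k+1\}$, $v_i=i+k+1$, and a third edge $e'$ of $F^i$ deleting some $\mu\in\{i+1,\ldots,i+k-1\}$, so again $\{i,i+k+1\}\subseteq e'\subseteq V(\HFii^*)$ with $\omega(e')=k+2$.

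Both remaining assertions then follow from $\omega(e')=k+2$. Since a cycle edge has $\omega=k<k+2$, the edge $e'$ is not a cycle edge, so $e'\in E^{i+2}$ in Case~A (respectively $e'\in E^i$ in Case~B); together with $\{i,i+k+1\}\subseteq e'$ this gives the first two parts of the claim. And if $e'$ were to lie in three of the $F^j$, then by Claim~\ref{claim:three_copies} it would equal one of the three listed shapes, each of which has $\omega\le k+1<k+2$, a contradiction. Hence $e'$ lies in at most two $F^j$, which finishes the proof.

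The one place that needs genuine care is the assertion $\omega(e')=k+2$. Bounding $\omega$ of the three Claim~\ref{claim:three_copies} shapes is immediate; for $e'$ one must rule out that the complementary arc --- running the long way from $i+k+1$ back to $i$ --- already covers $e'$, and this is exactly where the inequality $t_k\ge k+4$ enters, the point being that $e'$ always contains a vertex strictly inside the short arc $\{i,\ldots,i+k+1\}$ (there is one because $k\ge 4$). The Case~B computation is the mirror image of Case~A, and everything else is routine bookkeeping with the elementary structure of $\HFi$ and of tight cycles.
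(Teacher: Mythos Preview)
Your proof is correct and follows essentially the same approach as the paper: both split into the two non-cycle shapes of $e$ given by Claim~\ref{claim:three_copies}, locate the third edge $e'$ of $F^{i+2}$ (resp.\ $F^i$), observe that $e'$ contains $\{i,i+k+1\}$, and then appeal to Claim~\ref{claim:three_copies} again to rule out $e'$ lying in three $F^j$'s. The paper does this last step by a one-line remark (``it cannot contain both $i$ and $i+k+1$''), whereas you package the same observation via the arc-width invariant $\omega$; this has the pleasant side effect of also verifying $e'\notin E(\HFii^*)$, which the paper asserts without comment.

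One small point: your justification of $\omega(e')=k+2$ only explicitly rules out the single long arc $\{i+k+1,\ldots,i\}$, but of course one must rule out \emph{every} arc of length $\le k+1$. The clean way is to note that the complement of any covering arc is itself an arc disjoint from $e'$; such an arc lies either inside $\{i+1,\ldots,i+k\}$ (where it can have length at most $2$, since $e'$ misses only $i+1$ and $\mu$ there) or inside $\{i+k+2,\ldots,i-1\}$ (length $t_k-k-2$). Hence $\omega(e')=t_k-\max(2,t_k-k-2)$, and $t_k\ge k+4$ gives $\omega(e')=k+2$. This is precisely the inequality you invoke, so the argument is sound; it just deserves the extra sentence.
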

\begin{proof}
From Claim \ref{claim:three_copies} we know that either $e = \{i, i + 2, \ldots, i + k\}$, $e = \{i + 1, \ldots, i + k-1, i + k + 1\}$ or $e = e_{i+1}$. Assume first that 
$e =  \{i + 1, \ldots, i + k-1, i + k + 1\}$.  Then the edge $e' \in  E^{i} \setminus \{e\}$ must contain $i$ and $i+k+1$. 
If $e = \{i, i + 2, \ldots, i + k\}$ then the edge $e' \in  E^{i+2} \setminus \{e\}$ must contain $i$ and $i+k+1$. 
 Again from Claim \ref{claim:three_copies} we conclude that if $e'$ is contained in three different $F^j$s then it cannot contain both $i$ and $i + k + 1$, thus a contradiction. Therefore, $e'$ is the desired edge.
\end{proof}

\begin{claim}\label{claim:cycle_k3_copy}
Let $F^i$ be such that $V(F^i)\subseteq V(\HFii^*)$. If $|E^i|=1$ 
then either $e_{i-1}$ or $e_{i+1}\in E(F^i)$.
\end{claim}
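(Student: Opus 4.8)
The plan is to unwind the structure of a single copy $F^i\cong\HFi$ and read off directly what $|E^i|=1$ means. Recall that $\HFi$ has exactly three edges, each of size $k$, lying inside a vertex set of size $k+1$; hence every edge of $F^i$ is obtained from $V(F^i)$ by deleting exactly one vertex, and any two of its edges meet in exactly $k-1$ vertices. Since $V(F^i)=e_i\cup\{v_i\}$ with $v_i\notin e_i$, the edge obtained by deleting $v_i$ is $e_i$ itself, and the two remaining edges can be written as $f_1=(e_i\setminus\{u_1\})\cup\{v_i\}$ and $f_2=(e_i\setminus\{u_2\})\cup\{v_i\}$ for two distinct vertices $u_1,u_2\in e_i$. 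In particular $f_1\neq f_2$, both differ from $e_i$ (as $v_i\notin e_i$), and each of $f_1,f_2$ meets $e_i$ in exactly $k-1$ vertices.

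Next I would use the hypothesis $V(F^i)\subseteq V(\HFii^*)$: it gives $f_1,f_2\subseteq V(\HFii^*)$, so by the definition of $E^i$ we have $E^i=\{f_1,f_2\}\setminus E(\HFii^*)$. Thus $|E^i|=1$ is equivalent to the statement that exactly one of $f_1,f_2$ is a cycle edge of $\HFii^*$; say this edge is $f_1=e_j$. Then $|e_i\cap e_j|=|e_i\cap f_1|=k-1$.

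The only point with any content is then the elementary fact that in a tight $k$-cycle of length $t\geq 2k$ two distinct edges $e_i,e_j$ satisfy $|e_i\cap e_j|=k-1$ if and only if they are consecutive, i.e.\ $j\equiv i\pm 1\pmod{t}$; this is a short computation in which the hypothesis $t\geq 2k$ is exactly what rules out a spurious $(k-1)$-intersection coming from a short cyclic shortcut (setting $i=0$ and $d=\min(j,t-j)$, one checks $|e_0\cap e_j|=\max(k-d,0)$, which equals $k-1$ only for $d=1$). Since $t_4=8$, $t_5=14$ and $t_k=k^2$ for $k\geq 6$, we have $t_k\geq 2k$ for every $k\geq 4$, so $j\in\{i-1,i+1\}$. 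As $e_j=f_1\in E(F^i)$, this shows $e_{i-1}\in E(F^i)$ or $e_{i+1}\in E(F^i)$, which is exactly the claim. I do not expect a genuine obstacle here; the only thing to be careful about is having the length bound $t_k\geq 2k$ available and handling the wrap-around in the cyclic-distance estimate correctly.
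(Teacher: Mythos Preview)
Your argument is correct and is precisely the expansion of the paper's one-line proof, which just says ``This follows immediately from the fact that any two edges of $K^{+k}_3$ have intersection of size $k-1$.'' You make explicit what the paper leaves implicit: that $|E^i|=1$ forces one of the two non-$e_i$ edges of $F^i$ to be a cycle edge, and then the $(k-1)$-intersection with $e_i$ together with $t_k\ge 2k$ pins it down as $e_{i-1}$ or $e_{i+1}$.
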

\begin{proof}
This follows immediately from the fact that  any two edges of $K^{+k}_3$ have intersection of size $k-1$.
\end{proof}

For a given subset $X\subseteq \{k,\ldots,t_k-1\}$ we define $E_X$ to be the set of all edges $e$ of $\HFii^*$ such that either the leftmost vertex of $e$ lies in $X$ or $\min X\in e$, i.e.
$$
	E_X = \{e_i\colon i \in X\} \cup \{e \in E(\HFii^*) \colon \min X \in e \}.
$$

\begin{claim}
\label{claim:cycle_vertices}
Let $X \subseteq \{k,\ldots,t_k-1\}$, and let 
$$
	I = \{ i \colon e_i\in E_X \; \text{ and } \; v_i\not\in V(\HFii^*) \}
$$
and $I'\subseteq I$. If $|X|>3$ then the number of edges in $C^*$ that intersect $X$ and   are either contained in $V(\HFii^*)$ or intersect $\{v_i\colon i \in I'\}$ is at least 
\begin{equation} \label{eq:crazy_bound}
	(|X|+k-1)+\frac{|X|+k-1-|I|}{2}+\frac{|I'|}{2}.
\end{equation}
\end{claim}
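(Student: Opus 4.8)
Write $Y:=\{i : e_i\in E_X\}$. The first thing I would verify is that $|Y|=|X|+k-1$: the edges of $\HFii^*$ through the vertex $\min X$ are exactly $e_{\min X-k+1},\dots,e_{\min X}$, their $k-1$ ``new'' indices $\min X-k+1,\dots,\min X-1$ are positive (since $\min X\ge k$) and strictly below $\min X$, hence outside $X$, and since $|X|\le t_k-k$ there is no wrap-around. Every edge of $E_X$ lies in $V(\HFii^*)$ and meets $X$ — it either has its own index in $X$ or it contains $\min X$ — so $E_X$ already contributes $|X|+k-1$ edges of the required type, which is the first term of \eqref{eq:crazy_bound}. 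The rest of the proof produces $\tfrac12(|X|+k-1-|I|)$ further edges contained in $V(\HFii^*)$ and $\tfrac12|I'|$ further edges meeting $\{v_j:j\in I'\}$, all of them pairwise distinct and distinct from $E_X$.

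To get these I would harvest one edge from each copy $F^i$ with $i\in Y$. Using the shape of $K_3^{+k}=F^i$, namely edges $W\cup\{x,y\}$, $W\cup\{x,z\}$, $W\cup\{y,z\}$ with $|W|=k-2$, $e_i=W\cup\{x,y\}$ and $v_i=z$: if $m\in X\cap e_i$, then $m\in W$ forces \emph{both} non-cycle edges of $F^i$ to contain $m$, while $m\in\{x,y\}$ forces at least one of them to. Consequently, for every $i\in Y\setminus I$ (so $F^i\subseteq V(\HFii^*)$) the copy $F^i$ has a non-cycle edge in $E^i$ meeting $X$, and for every $i\in I'$ the copy $F^i$ has an edge containing $v_i\in\{v_j:j\in I'\}$ and meeting $X$. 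Since $v_i\notin V(\HFii^*)$ for $i\in I'$, edges of the second kind are automatically disjoint from those of the first kind and from $E_X$.

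To convert local contributions into the claimed fractions I would bound the number of copies sharing a harvested edge. By Claim~\ref{claim:three_copies} any edge of $C^*$ lies in at most three of the $F^i$, and any edge meeting an outside vertex $v_i$ lies in at most two of them (a triple-shared edge is forced into $V(\HFii^*)$); hence the $I'$-edges yield at least $\tfrac12|I'|$ distinct ones. For the inside non-cycle edges a genuine triple coincidence (an edge in $F^i$, $F^{i+1}$ and $F^{i+2}$) is possible, but Claim~\ref{claim:structure_three_copies} lets me reroute the credit of such an edge to an edge $e'\in E^i\cup E^{i+1}\cup E^{i+2}$ with $\{i,i+k+1\}\subseteq e'$ that lies in at most two copies; after this rerouting every counted inside edge is charged by at most two copies, so at least $\tfrac12(|Y\setminus I|)=\tfrac12(|X|+k-1-|I|)$ of them are distinct. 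Summing the three disjoint families gives exactly \eqref{eq:crazy_bound}; Claims~\ref{claim:outside_vertices} and~\ref{claim:cycle_k3_copy} support the bookkeeping of how many non-cycle edges a given $F^i$ offers, and the hypothesis $|X|>3$ removes small-case anomalies (short runs of consecutive indices of $Y$ colliding with the attachment edge $e_0$).

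The main obstacle I anticipate is this charging step, specifically the boundary copies $F^i$ with $i\in\{\min X-k+1,\dots,\min X-1\}$, where $i\notin X$ and the witnessing edge must instead capture the vertex $\min X$. One must check that the rerouted edge $e'$ of Claim~\ref{claim:structure_three_copies} still meets $X$ in that regime, using the explicit position of $\min X$ inside $e_i\subseteq\{i,\dots,i+k-1\}\subseteq e'$, and that after all rerouting no edge is over-charged. Handling these $O(k)$ boundary copies with the same efficiency $\tfrac12$ as the interior ones — rather than discarding them and losing an additive constant — is what forces the combined use of Claims~\ref{claim:three_copies} and~\ref{claim:structure_three_copies}; everything else is routine case-checking on the three-edge hypergraph $K_3^{+k}$.
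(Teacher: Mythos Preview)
Your treatment of the first summand (the $|X|+k-1$ cycle edges in $E_X$) and of the third summand (edges through the outside vertices $\{v_i:i\in I'\}$, bounded via Claim~\ref{claim:three_copies}) is correct and matches the paper. The gap is in the middle term, and it is more fundamental than the rerouting issue you flag at the end.

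The assertion ``for every $i\in Y\setminus I$ the copy $F^i$ has a non-cycle edge in $E^i$ meeting $X$'' is false. The two edges of $F^i$ other than $e_i$ are $W\cup\{x,v_i\}$ and $W\cup\{y,v_i\}$; either may itself be a cycle edge of $\HFii^*$, in which case it is not in $E^i$ at all. Concretely, take $k=6$, $t_k=36$, $X=\{10,20,25,30\}$, and $i=6$ with $v_6=5$, $W=\{6,7,8,9\}$, $\{x,y\}=\{10,11\}$. Then $\min X=10\in e_6$, so $6\in Y\setminus I$; the edge $W\cup\{10,v_6\}=\{5,6,7,8,9,10\}=e_5$ is a cycle edge already counted in $E_X$, while the one genuine non-cycle edge $\{5,6,7,8,9,11\}\in E^6$ misses $X$ entirely. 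Thus $F^6$ contributes no \emph{new} edge of the required type, and the factor-$\tfrac12$ count over $Y\setminus I$ collapses before you ever reach the triple-sharing analysis.

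The paper does not try to harvest from every such $F^i$ in the same way. It splits $Y\setminus I$ into $E_2$ (every edge of $E^i$ meets $X$) and $E_1$ (some edge of $E^i$ misses $X$), and uses \emph{disjoint} witness families for the two parts. For $E_2$ the non-cycle edges $E'=\bigcup_{e_j\in E_2}E^j$ work, and here your rerouting idea via Claims~\ref{claim:three_copies} and~\ref{claim:structure_three_copies} goes through precisely because, by the definition of $E_2$, the rerouted edge $e'$ automatically meets $X$. For $E_1$ the paper abandons non-cycle edges altogether and instead exploits a further family of \emph{cycle} edges $\hat E=\{e\in E(\HFii^*)\setminus E_X:e\cap X\neq\emptyset\}$, constructing an explicit map $f\colon E_1\to\hat E$ with fibres of size at most two; the construction of $f$, and in particular the treatment of the two extremal edges $e_{x_1-k+1}$ and $e_{x_m}$, is where the hypothesis $|X|>3$ is actually consumed. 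Your outline has no analogue of this second family, and without it the bound cannot be reached.
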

\begin{proof}
We write $X=\{x_1<\ldots<x_m\}$ ($m\ge 4$). There are exactly $m$ edges in $\HFii^*$ whose leftmost vertex lies in $X$ (namely $e_{x_1}, \ldots, e_{x_m}$). Moreover, there are further $k-1$ edges $e \in E(\HFii^*)$ with $x_1 \in e$ (note that from $x_1 \ge k$ we have that these edges are different from $e_{x_i}$) and therefore $|E_X| = |X| + k - 1$. Clearly, by the definition we have that each $e \in E_X$ intersects $X$ and $e \subseteq V(\HFii^*)$ thus this establishes the first group of summands in \eqref{eq:crazy_bound}.

Next, we estimate those edges which contain a vertex from $\{v_i\colon i \in I'\}$ and $X$. %For this we go along the cycle $\HFii^*$ and 
Since $v_i \not \in V(\HFii^*)$ and $e_i \cap X \neq\emptyset$ for $i \in I'$ (this follows from $e_i \in E_X$), we deduce that at least one of the edges from $E(F^{i})\setminus\{e_i\}$ must intersect $X$ and contain $v_i$ (recall that in $K_3^{+k}$ any pair of vertices is connected by an edge).
% we deduce that at least one of the edges from $E(F^{i})\setminus\{e_i\}$ contains the vertex $v_i$ (and therefore intersects $X$) 
 Moreover, from Claim \ref{claim:three_copies} we have that every such edge $e$ belongs to at most two $F^{i}$s, as otherwise we have $v_i \in e \subseteq V(\HFii^*)$ which contradicts $v_i \not \in V(\HFii^*)$. This shows that the number of edges incident both to $X$ and $\{v_i\}_{i\in I'}$ is at least $|I'|/2$, which establishes the third summand in \eqref{eq:crazy_bound}.

It remains to show that there are at least
\begin{equation} \label{eq:second_sum}
	\frac{|X| + k - 1 - |I|}{2}
\end{equation}
edges $e \in E(C^*) \setminus E_X$ such that $e \cap X \neq \emptyset$ and $e \subseteq V(\HFii^*)$. Let
\begin{align*}
	E &:= \{e_i \in E_X \; : \; v_i \in V(\HFii^*) \}, \\
	E_1 &:= \{ e_i \in E_X \; : \; v_i \in V(\HFii^*) \; \text{ and } \; \exists e \in E^i \text{ such that } e \cap X = \emptyset \}, \\
	E_2 &:= \{ e_i \in E_X \; : \; v_i \in V(\HFii^*) \; \text{ and } \;  \forall e \in E^i \text{ we have } e \cap X \neq \emptyset \}.
\end{align*}
Observe that $E=E_1\dot\cup E_2$ holds.
Furthermore set
\begin{align*}
	E' &:= \bigcup_{e_j \in E_2} E^j,\\
	\hat E &= \{ e \in E(\HFii^*) \setminus E_X \; : \; e \cap X \neq \emptyset \}.
\end{align*}
Note that $E' \cap \hat E = \emptyset$ (recall that $E^j \cap E(\HFii^*) = \emptyset$ by the definition). Moreover, for every edge $e \in E' \cup \hat E$ we have $e \cap X \neq \emptyset$ and $e \not \in E_X$.  Here the first property follows directly from the definition of the sets $E'$ and $\hat{E}$, whereas the second one follows from the definition of the sets $E^j$ and $\hat{E}\cap E_X=\emptyset$.

Moreover, for every $e_i \in E$ we have $i \not \in I$, thus 
\begin{equation} \label{eq:E_1E_2}
	|E_1| + |E_2| = |E| = |E_X| - |I| = |X| + k - 1 - |I|.
\end{equation}
 We estimate the sizes of $E'$ and $\hat E$ separately. 

\begin{itemize}
	\item $|E'| \ge |E_2| / 2$

Let $S_3$ be the set of those edges from $E'$ that belong to at least three sets $E^j$  for some $j$ such that $e_j \in E_2$. From Claims~\ref{claim:three_copies} and~\ref{claim:structure_three_copies} we deduce that if an edge $e \in E'$ belongs to some $F^{i_1}, F^{i_2}$ and $F^{i_3}$ then there is another edge $e' \in E'$ which also belongs to $E^{i_1}\cup E^{i_2}\cup E^{i_3}$  but lies in at most two copies of some $F^{i}$. Therefore, the number of $E^j$s (where $j\in \{i\colon e_i\in E_2\}$) such that $E^j \subseteq  S_3$ is at most $2|S_3|$
% (otherwise there exist three sets $E^{i_1}$, $E^{i_2}$ and $E^{i_3}$ that contain only one and the same edge 
% from $S_3$ which is impossible).
(as otherwise there is an edge $e \in E^{i_1} \cup E^{i_2} \cup E^{i + 3} \subseteq S_3$ which is contained in only two different $F^i$'s, which is a contradiction).
  It follows that $|E'| \ge \frac{|E_2|-2|S_3|}{2}+|S_3|=|E_2|/2$ holds.

	\item $|\hat{E}| \ge |E_1|/2$

By the definition of $E_1$, \emph{any} edge  $e_i\in E_1$ intersects $X$ in \emph{exactly} one vertex. In what follows we will construct a function $f\colon E_1\to \hat{E}$ where every edge of $\hat{E}$ has at most two preimages. This will then imply  $|\hat{E}| \ge |E_1|/2$.

 Given an edge $e\in E_1$ let $x_i$ be the only vertex from $X$ that is contained in $e$. If $e\neq e_{x_1-k+1}$  then $x_i$ is not the rightmost vertex in the edge $e$. If in addition $i<m$ then we define another edge $f(e)$ as follows: let $f(e)\in E(\HFii^*)$  be an edge which contains $x_{i+1}$, where $x_{i+1}$ is preceded by as many vertices as there are vertices in $e$ that come after $x_i$. Clearly, $f(e)\not\in E_X$ and $f(e)\cap X\neq\emptyset$.  
We observe that such edges $f(e)$ are all distinct and in particular $f\colon E_1\setminus\{e_{x_1-k+1},e_{x_m}\}\to \hat{E}$ is injective.  Thus, $|\hat{E}|\ge |E_1\setminus\{e_{x_1-k+1},e_{x_m}\}|$ and if $|E_1\setminus\{e_{x_1-k+1},e_{x_m}\}|\ge |E_1|/2$ then we are done.

 Therefore, it remains to consider the cases when $e_{x_1-k+1}\in E_1$ or $e_{x_m}\in E_1$ holds, where we will define some more edges of the `type' $f(e)$. This amounts to a somewhat tedious case distinction.  If $e_{x_1-k+1}\in E_1$ then we first additionally assume that  either $(x_1+1)\not\in X$ or $(x_1+2)\not\in X$ is the case.  In particular, if $(x_1+1)\not\in X$ then we define $f(e_{x_1-k+1}):=e_{x_2-1}$ and if $(x_1+2)\not\in X$ (and $(x_1+1)\in X$ is) then we set  
 $f(e_{x_1-k+1}):=e_{x_3-1}$.  Similarly, if $e_{x_m}\in E_1$ and $(x_m-1)\not\in X$ then we set $f(e_{x_m}):=e_{x_m-1}$ and if $(x_m-2)\not\in X$ (and $(x_m-1)\in X$ is) we set $f(e_{x_m}):=e_{x_{m-1}-1}$. Notice that $f(e_{x_1-k+1})\neq f(e_{x_m})$ unless $m=4$, $x_2=x_1+1$, $x_4=x_3+1$ and $(x_1+2)\not \in X$ (recall that by assumption $m \ge 4$).
  In any case, it  is easy to check that at most two $f(e)$s are pairwise equal. Therefore, $|\hat{E}|\ge |E_1|/2$.
 
 Finally we treat the case where at least one of the remaining options holds: $e_{x_1-k+1} \in E_1$, $(x_1 + 1), (x_1 + 2)\in X$  or $e_{x_m}\in E_1$ and $(x_m-1), (x_m-2)\in X$. 
 If $e_{x_1-k+1}\in E_1$ and $(x_1 + 1), (x_1 + 2)\in X$, it follows from Claim~\ref{claim:structure_three_copies} and $e_{x_1-k}\not \in E_X$, that none of the edges from $E^{x_1-k+1}$ 
lies in three copies $F^i$ (where all $i\in \{j\colon e_j\in E\}$) -- otherwise these have to be $F^{x_1-k+1}$, 
$F^{x_1-k+2}$ and $F^{x_1-k+3}$. But then with Claim~\ref{claim:three_copies} it follows that $F^{x_1-k+1}$ intersects $X$ in two vertices, which implies $e_{x_1-k+1}\in E_2$ (a contradiction). Moreover, it holds by definition that $|E^{x_1-k+1}|\in\{1,2\}$. And it follows further from Claim~\ref{claim:cycle_k3_copy} that if $|E^{x_1-k+1}|=1$ then either $e_{x_1-k}$ or $e_{x_1-k+2}$ lies in $F^{x_1-k+1}$, which again implies that $e_{x_1-k+1}\in E_2$. Thus, we have  $|E^{x_1-k+1}|=2$. Next we delete one edge from $E^{x_1-k+1}$ which  doesn't intersect $X$ (and still denote the set by $E^{x_1-k+1}$). Similarly, if $e_{x_m}\in E_1$ and $(x_m-1), (x_m-2)\in X$ then none of the edges from $E^{x_m}$ lies in three copies $F^i$ (where all $i\in \{j\colon e_j\in E\}$). Again we have $|E^{x_m}|=2$ and we  
delete from $E^{x_m}$ the edge that doesn't intersect $X$. In each of the cases, we add $e_{x_1-k+1}$ (resp.\ $e_{x_m}$) to $E_2$ (and remove them from $E_1$) and the edges from $E^{x_1-k+1}$ (resp.\ $E^{x_m}$) to $E'$ (but keeping the same notation). Now, a short meditation reveals that the same argumentation as above applies to these slightly altered sets $E_1$ and $E_2$ to show  $|E'| \ge |E_2| / 2$ and $|\hat{E}| \ge |E_1|/2$. Indeed, the inequality $|E'| \ge |E_2| / 2$ holds since  edges $e_{x_1-k+1}$ and $e_{x_m}$ belong to at most two copies of $F^j$ (with $j\in \{i\colon e_i\in E_2\}$) and thus the estimate $|E'| \ge \frac{|E_2|-2|S_3|}{2}+|S_3|=|E_2|/2$ remains valid. Whereas the inequality $|\hat{E}| \ge |E_1|/2$ holds since we exclude $e_{x_1-k+1}$ and/or $e_{x_m}$ from $E_1$ that leads to a simpler function $f\colon E_1\to \hat{E}$, whose property that each element has at most two preimages remains valid.

\end{itemize}

%We estimate the number of desired edges from $C^*$ %that are either contained in $V(\HFii^*)$ and intersect $X$ or intersect both $X$ and $\{v_i \; : \; i\in I'\} 
%by further obtaining lower bounds on the number of edges in $\HFii^*$ which are not in $E_X$ and on the number of edges not in contained in $V(C^{(k,k-1)}_t)$ that intersect $X$ and by further bounding those edges from $C^*$ that intersect both $X$ and $\{v_i\colon i\in I'\}$  (this is a subset of outside vertices).

%Let $E_2$ be those edges $e_j \in E$ such that every edge $e \in E^j$ satisfies $e\cap X \neq\emptyset$ and set $E':=\cup_{e_j\in E_2} E^j$. 

%Let $j'$ and $j''\in\{k,\ldots,t_k-1\}$  be such that $e(x_1)=e_{j'}$ and $e(x_m)=e_{j''}$. 

The claim now follows from \eqref{eq:E_1E_2} and previously obtained bounds.
%Altogether this yields that the number of edges  from $C^*$  that are either contained in $V(C^{(k,k-1)}_t)$  and intersect $X$ or intersect both $X$ and $\{v_i\colon i\in I'\}$ is at least 
%\[
%|E_X|+|E_1|/2+|E_2|/2+|I'|/2=|E_X|+|E|/2+|I'|/2 =  |X|+k-1+\frac{|X|+k-1-|I|}{2}+\frac{|I'|}{2}.
%\]
\end{proof}

Now we are ready to prove Lemma \ref{lemma:prop_c}.

\begin{proof}[Proof of Lemma \ref{lemma:prop_c}]
Consider some $C^* \in \mathcal{C}^*$ and suppose $H \subseteq C^*$ contains the attachment edge $e_0$ and $v(H) < v(C^*)$. Let
$$
	S_C := V(C^*) \setminus (V(\HFii^*) \cup V(H)) \quad \text{and}
	\quad S_H := V(H) \setminus V(\HFii^*)
$$
be the set of `outside' vertices of $C^*$ partitioned into those which are contained in $H$ and the rest. 
 Furthermore, set
$$
	I:=\{i \colon  v_i \not \in V(\HFii^*) \cup V(H)\}.
$$
Clearly, $S_C = \{v_i\colon i \in I\}$. Furthermore, let $X\subset V(\HFii^*)$ be such that $S_C \dot\cup X = V(C^*) \setminus V(H)$, i.e.
$$
	X:= V(\HFii^*) \setminus V(H).
$$
Recall that $E_X$ consists of  those edges $e$ of $\HFii^*$ such that either their leftmost vertex of $e$ lies in $X$ or $\min X\in e$. 
We aim to lower bound $e(C^*) - e(H)$  in terms of $|S_C|$ and $|X|$. Since $v(C^*) - v(H) = |S_C| + |X|$ this enables us to bound the ratio  $(e(C^*) - e(H))/(v(C^*) - v(H))$ from below.
% We aim to bound the ratio of $e(C^*) - e(H)$ and $v(C^*) - v(H)$ as a function of $|S_C|$ and $|X|$.
 % In order to obtain a desired bound on $\frac{e(C^*) - e(H)}{v(C^*) - v(H)}$ we determine a lower bound on $e(C^*) - e(H)$ and an upper bound on $v(C^*) - v(H)$. 

Assume first that $|X|\ge 4$. Let us denote by $E_1 \subseteq E(C^*)$ the subset of all edges which intersect $S_C$. From Claim~\ref{claim:outside_vertices} we obtain 
\begin{equation}
	\label{eq:bound_E_1}
	|E_1| \ge |I|+|S_C|.
\end{equation}
Note that from the definition of $S_C$ we have that no edge $e \in E(H)$ intersects $S_C$, and thus it holds $E_1 \subseteq  E(C^*) \setminus E(H)$.
% Next, let $E_2 \subseteq E(C^*) \setminus (E(H) \cup E_1)$ be the subset of all edges $e$  which intersect $X$ 
% and either $e \subseteq V(\HFii^*)$ or $e$ intersects $S_H$. Similarly as 
% in the previous case, by the definition of $X$ we have that no edge $e \in E(H)$ intersects $X$. Therefore, if 
% an edge $e \in E(C^*)$ lies in $V(\HFii^*)$ and intersects $X$ then $e \in E_2$. Moreover, since each edge in $
% C^*$ contains at most one vertex from $V(C^*) \setminus V(\HFii^*)$ and each edge $e \in E_1$ contains a vertex 
% from $S_C$, we conclude that $e$ does not intersect $S_H$. Therefore, if an edge $e \in E(
% C^*)$ intersects $X$ and $S_H$ then $e \notin E(H) \cup E_1$ and thus $e \in E_2$. 
Let us denote 
$$
	J:= \{j \colon v_j\not\in V(\HFii^*) \text{ and }  e_j\in E_X\}.
$$
From Claim \ref{claim:cycle_vertices} we know that the number of edges $e\in E(C^*) 
\setminus E(H)$ that intersect $X$ and are either 
contained in $V(\HFii^*)$ or intersect $\{v_i\colon i\in J \setminus I\} \subseteq S_H$ is at least 
\begin{equation} \label{eq:bound_E_2}
	|X|+k-1+\frac{|X|+k-1-|J|}{2}+\frac{|J\setminus I|}{2}.
\end{equation} 
Let us denote such set of edges with $E_2$.
Note that in order to apply Claim \ref{claim:cycle_vertices} we also need that $X \cap \{0, \ldots, k - 1\} = \emptyset$, which follows from the assumption $e_0 \in E(H)$. 
We have $E_2 \cap E(H) = \emptyset$ as each edge in $E_2$ intersects $X$ which is a set disjoint from $V(H)$. Furthermore, we claim that $E_2$ is disjoint from $E_1$. 
Every edge in $C^*$ intersects the set of `outside' vertices $V(C^*) \setminus V(\HFii^*)$ on at most one vertex. Let $e' \in E_2$  be an arbitrary edge from $E_2$. If $e'$ is 
contained in $V(\HFii^*)$ then $e' \not \in E_1$. Otherwise, we know that $e'$ 
intersects $\{v_i\colon i\in J \setminus I\} \subseteq S_H$. As $S_H$ is disjoint from $S_C
$ we  conclude $e' \notin E_1$ and thus $E_1 \cap E_2 = \emptyset$.
Having this observation, together with the fact 
$E_1 \cup E_2 \subseteq E(C^*) \setminus E(H)$, we bound $e(C^*) - e(H)$  by 
using~\eqref{eq:bound_E_1} and~\eqref{eq:bound_E_2} as follows, 
\begin{equation}\label{eq:lower_bound_edges}
e(C^*) - e(H) \ge |E_1| + |E_2| \ge |I|+|S_C|+|X|+k-1+\frac{|X|+k-1-|J|}{2}+\frac{|J\setminus I|}{2}.
\end{equation}
On the other hand we have $v(C^*)-v(H)=|S_C|+|X|$. Therefore we obtain
\begin{align*}
	\frac{e(C^*) - e(H)}{v(C^*) - v(H)} &\ge \frac{|I|+|S_C|+|X|+k-1+\frac{|X|+k-1-|J|}{2}+\frac{|J\setminus I|}{2}}{|S_C|+|X|} \\
		&= 1+ \frac{2|I| + 3 (k-1) + |X|-|J| + |J\setminus I|}{2(|S_C|+|X|)} \\	
		&\ge 1 + \frac{|I| + 3(k-1) + |X| - |J| + |J|}{2(|S_C| + |X|)} \\
		&\stackrel{(|I| \ge |S_C|)}{\ge} 1 + \frac{|S_C| + |X| + 3(k-1)}{2(|S_C| + |X|)} \\
		&= \frac{3}{2} + \frac{3(k-1)}{2(|S_C|+|X|)}.
\end{align*} 
By comparing this with
$$
	m_k(K^{+k}_3, \HFii)=\frac{3t_k-3}{2t_k-k-1}=\frac{3}{2}+\frac{3(k-1)}{2(2t_k-k-1)},
$$
we see that 
$$
	\frac{e(C^*) - e(H)}{v(C^*) - v(H)}\ge \frac{3}{2}+\frac{3(k-1)}{2(|S_C|+|X|)}\ge \frac{3}{2}+\frac{3(k-1)}{2(2t_k-k-1)} = m_k(\HFi, \HFii).
$$
The second inequality follows from $|X| = |V(\HFii^*) \setminus V(H)| \le t_k - k$ (since $H$ contains $e_0$) and $|S_C| = |V(C^*) \setminus (V(\HFii^*) \cup V(H))| \le |V(C^*) \setminus V(\HFii^*)| \le t_k - 1$. Therefore, the equality is possible only if $|X|=t_k - k$ and $|S_C|=t_k - 1$, in which case $C^*$ is generic and $H$ consists only of the attachment edge. 

It remains to consider the case where $|X|\in \{0, 1,2,3\}$. Suppose $X = \emptyset$. Then $v(C^*) - v(H) = |S_C| \ge 1$. From Claim \ref{claim:outside_vertices} we have that there are at least $|I| + |S_C| \ge 2|S_C|$ edges $e \in C^*$ which intersect $S_C$, and by the definition none of these edges belongs to $H$. Therefore $e(C^*) - e(H) \ge 2|S_C|$ and we get
$$
	\frac{e(C^*) - e(H)}{v(C^*) - v(H)}\ge 2\overset{t_k\ge 2k}{>}m_k(K^{+k}_3, \HFii).
$$

Finally, suppose $1 \le |X| \le 3$. There are at least $|X|+k-1$ edges in $E(\HFii^*) \setminus E(H)$ which intersect $X$. In particular, those are $e_{x}$ for each $x \in X$ and $k-1$ edges preceding $e_{x_1}$ where $x_1 = \min X$ (since $x_1 \ge k$ there is no double counting). Moreover, by the same argument as in the previous case we have at least $|I| + |S_C| \ge 2|S_C|$ edges which intersect $S_C$. This gives
$$
\frac{e(C^*) - e(H)}{v(C^*) - v(H)}\ge \frac{|X|+k-1+2|S_C|}{|X|+|S_C|}\ge 2+\frac{k-1-|X|}{|X|+|S_C|}\overset{k\ge 4}{\ge} 2\overset{t_k \ge 2k}{>}m_k(K^{+k}_3, \HFii),
$$
as required.
\end{proof}

\section{Concluding remarks}\label{sec:conclusion}
The $1$-statement in Theorem~\ref{thm:asymmetric-1} requires $F_1$ be strictly balanced with respect to $m_k(\cdot, F_2)$. We use this condition in Lemmas~\ref{lemma:funionf} and~\ref{lemma:janson} to bound the expected number of pairs of distinct copies of $F_1$ that share at least one edge and to apply Janson's inequality. This is exploited subsequently in the proof of Theorem~\ref{thm:asymmetric-1} in the probability estimate~\eqref{eq:H_not_Ramsey1} via~\eqref{eq:E_i}. At the expense of an additional log-factor we can drop the condition on the strict balancedness and prove the following. 
\begin{theorem} \label{thm:asymmetric-general}
Let $r \ge 2$ and $F_1, \ldots, F_r$ be $k$-uniform hypergraphs such that $m_k(F_1) \geq m_k(F_2) \geq \dotsb \geq m_k(F_r)>0$. Then there exists a constant $C > 0$ such that for $p \ge Cn^{-1/m_k(F_1, F_2)}\log n$ we have
$$
  \lim_{n \rightarrow \infty} \Pr\left[ \Hknp \rightarrow (F_1, \ldots, F_r) \right] = 1.
$$
\end{theorem}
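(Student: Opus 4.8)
The plan is to rerun the proof of Theorem~\ref{thm:asymmetric-1} but to omit the passage to \emph{isolated} copies of $F_1$ — which is exactly where strict balancedness of $F_1$ with respect to $m_k(\cdot,F_2)$ is used (through Claim~\ref{cl:few_nonisolated}/Lemma~\ref{lemma:funionf}, the leftover sets $L_i$, the estimate~\eqref{eq:E_i}, and Lemma~\ref{lemma:janson}) — and to pay for the loss with the extra $\log n$ factor in $p$. By monotonicity we may assume $p=Cn^{-1/m_k(F_1,F_2)}\log n$ for a constant $C$ fixed at the end; colours whose forbidden graph is a single edge can be discarded, so we assume $e(F_i)\ge 2$ for all $i$, in particular $e(F_1)\ge 2$ (if $F_1$ is a single edge the statement is trivial).

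Suppose $H\sim\Hknp$ with $H\not\to(F_1,\dots,F_r)$, and fix a partition $E(H)=E_1\dot\cup\dots\dot\cup E_r$ into $F_i$-free hypergraphs. Applying Theorem~\ref{thm:container} to each $F_i$ ($i\ge 2$) with $\varepsilon=\alpha/2$, where $\alpha$ is the constant of Theorem~\ref{thm:ramsey_count} for $(F_1,\dots,F_r)$, yields a tuple $T^i$ with $T^i\subseteq E_i\subseteq f_i(T^i)$, $|T^i|\le \ell_i n^{k-1/m_k(F_i)}\le \ell_i n^{k-1/m_k(F_2)}$, and at most $\tfrac{\alpha}{2}n^{v(F_i)}$ copies of $F_i$ inside $f_i(T^i)$. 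Set $R(\mathbf T):=E(K_n^{(k)})\setminus\bigcup_{i\ge 2}f_i(T^i)$, where $\mathbf T=(T^2,\dots,T^r)$. Colouring $R(\mathbf T)$ with colour $1$ and every other edge with some $i\ge 2$ for which it lies in $f_i(T^i)$, Theorem~\ref{thm:ramsey_count} forces at least $\alpha n^{v(F_1)}$ copies of $F_1$ into $R(\mathbf T)$. Since $R(\mathbf T)\cap E(H)\subseteq E(H)\setminus\bigcup_{i\ge 2}E_i=E_1$ is $F_1$-free, $H$ satisfies the event $\cP_1(R(\mathbf T))$ that no copy of $F_1$ lying in $R(\mathbf T)$ is contained in $H$. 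Hence
\[
 \Pr[H\not\to(F_1,\dots,F_r)]\ \le\ \sum_{\mathbf T}\Pr[\cP_1(R(\mathbf T))],
\]
the sum over all $\mathbf T$ with $|T^i|\le\ell_i n^{k-1/m_k(F_2)}$ and $f_i(T^i)$ spanning at most $\tfrac{\alpha}{2}n^{v(F_i)}$ copies of $F_i$ for every $i\ge 2$ (so that the Ramsey count above is valid); a crude estimate bounds their number by $e^{O(n^{k-1/m_k(F_2)}\log n)}$, with the implied constant independent of $C$.

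For a fixed admissible $\mathbf T$ we estimate $\Pr[\cP_1(R(\mathbf T))]$ with Janson's inequality applied to the at least $\alpha n^{v(F_1)}$ copies of $F_1$ contained in $R(\mathbf T)$. Their expected number in $H$ is $\mu\ge\alpha C^{e(F_1)}(\log n)^{e(F_1)}n^{v(F_1)-e(F_1)/m_k(F_1,F_2)}\ge\alpha C(\log n)\,n^{k-1/m_k(F_2)}$, using $v(F_1)-k+1/m_k(F_2)\ge e(F_1)/m_k(F_1,F_2)$ (Definition~\ref{def:mk-asymm} with $F_1'=F_1$). For the pair-overlap term $\Delta$, a copy-pair sharing a subgraph isomorphic to $S\subsetneq F_1$ contributes of order $n^{2v(F_1)-v(S)}p^{2e(F_1)-e(S)}$, whose $n$-exponent equals $2\big(v(F_1)-e(F_1)/m_k(F_1,F_2)\big)-\big(v(S)-e(S)/m_k(F_1,F_2)\big)$; since $v(S)-k+1/m_k(F_2)\ge e(S)/m_k(F_1,F_2)$ (again Definition~\ref{def:mk-asymm}), one obtains $\mu^2/\Delta\ge\beta C\,n^{k-1/m_k(F_2)}\log n$ for a constant $\beta=\beta(F_1,F_2)>0$. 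By Janson's inequality (the bound $e^{-\mu/4}$ when $\Delta\le\mu$ and $e^{-\mu^2/(2\Delta)}$ otherwise) this gives $\Pr[\cP_1(R(\mathbf T))]\le e^{-\gamma C\,n^{k-1/m_k(F_2)}\log n}$ for some $\gamma>0$. Combining with the number of admissible $\mathbf T$, the right-hand side above is at most $e^{O(n^{k-1/m_k(F_2)}\log n)}\cdot e^{-\gamma C\,n^{k-1/m_k(F_2)}\log n}=o(1)$ once $C$ is large enough.

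The essential point — and the reason a single extra $\log n$ appears — is this bookkeeping of $\log n$'s. In Theorem~\ref{thm:asymmetric-1} the $e^{O(n^{k-1/m_k(F_2)}\log n)}$ choices of containers are neutralised through the auxiliary ``rooted copies'' event $\cP_2(T)$, which contributes one factor $n^{-1/m_k(F_2)}$ per edge of $T$ and thereby cancels the $n^{k}$ inside $\binom{n^k}{|T|}$; this is available only because almost all copies of $F_1$ are isolated, which is exactly what strict balancedness provides. Dropping it, the $n^{k}$ survives and must instead be defeated by enlarging the Janson mean $\mu$ (equivalently $\mu^2/\Delta$) by a factor $\log n$, which is precisely what replacing $p$ by $p\log n$ achieves. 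Verifying that one such factor suffices, and that $\Delta$ stays small enough for the $\mu^2/\Delta$ form of Janson to be applicable using only the inequality $v(S)-k+1/m_k(F_2)\ge e(S)/m_k(F_1,F_2)$ valid for every $S\subseteq F_1$ by the definition of $m_k(F_1,F_2)$, is the only delicate step.
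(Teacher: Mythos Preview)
Your proposal is correct and follows the same overall scheme as the paper's sketch: drop the isolated-copies/$\cP_2$ machinery, apply containers to colours $i\ge 2$, use Theorem~\ref{thm:ramsey_count} to get $\Omega(n^{v(F_1)})$ copies of $F_1$ in $R(\mathbf T)$, bound the probability that none survive via Janson, and absorb the $e^{O(n^{k-1/m_k(F_2)}\log n)}$ container choices with the extra $\log n$ in $p$.

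The one point of divergence is the Janson step. The paper passes to a subgraph $F_1'\subseteq F_1$ that \emph{is} strictly balanced with respect to $m_k(\cdot,F_2)$ (and satisfies $m_k(F_1',F_2)=m_k(F_1,F_2)$), observes that $\Omega(n^{v(F_1)})$ copies of $F_1$ in $R(\mathbf T)$ yield $\Omega(n^{v(F_1')})$ copies of $F_1'$, and then reuses Lemmas~\ref{lemma:funionf} and~\ref{lemma:janson} verbatim for $F_1'$. You instead apply Janson directly to $F_1$ and bound $\mu^2/\Delta$ from scratch, using only the definitional inequality $e(S)/(v(S)-k+1/m_k(F_2))\le m_k(F_1,F_2)$ for every $S\subseteq F_1$ with $e(S)\ge 1$. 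Your computation is correct: this inequality gives $n^{v(S)}p^{e(S)}\ge C(\log n)\,n^{k-1/m_k(F_2)}$ for each such $S$, hence $\mu^2/\Delta\ge\beta C(\log n)\,n^{k-1/m_k(F_2)}$, which is exactly what is needed. Your route is slightly more self-contained (no auxiliary subgraph), while the paper's is more modular (it recycles the existing lemmas unchanged); both yield the same bound.
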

The proof is a slight modification of the proof of Theorem~\ref{thm:asymmetric-1} and  we briefly sketch the argument. 
We exploit $\Hknp\not \rightarrow (F_1, \ldots, F_r)$ by applying the container  theorem, Theorem~\ref{thm:container}, as before with $F=F_i$ for $i\ge 2$ and with $H_i$ being the edges of $\Hknp$ coloured $i$ without a copy of $F_i$. The tuples $T^i$ consist then of sets of sizes at most $\ell n^{k - 1/m_k(F_i)}$ each. Thus, a colouring of the edges of $H\sim \Hknp$ which certifies $H\not \rightarrow (F_1, \ldots, F_r)$ allows us to place the $i$-coloured edges  $H_i$, for $i\ge 2$, into some container $C_i$ with fewer than $\alpha n^{v(F_i)}$ copies of $F_i$. Thus, Lemma~\ref{thm:ramsey_count} yields, that $H':=K^{(k)}_n\setminus \cup_{i\ge 2} E(C_i)$ contains at least $\alpha n^{v(F_1)}$ copies of $F_1$ for some absolute $\alpha>0$. Therefore, all edges in $H'\cap H$ are coloured in colour $1$ and since $H_1=H'\cap H$, none of the copies of $F_1$ from $H'$ is in $H_1$. 
But then Janson's inequality, viz.\ Lemma~\ref{lemma:janson}, yields the probability of at most $e^{-\beta C n^{k - 1/m_k(F_2)}\log n}$ that none of the copies of $F_1$ in $H_1$ is present in $H$. More precisely, let $F_1'$ be the subgraph of $F_1$ that is strictly balanced with respect to $m_k(\cdot,F_2)$. Then, Lemmas~\ref{lemma:funionf} and~\ref{lemma:janson} are applicable to $F_1'$ instead of $F_1$ and if $H_1$ doesn't contain a copy of $F'_1$ then it doesn't contain a copy of $F_1$ as well. The probability for the latter event is, by Lemma~\ref{lemma:janson}, at most $e^{-\beta C n^{k - 1/m_k(F_2)}\log n}$. The number of choices for $T^i$'s is at the same time at most 
\[
\binom{n^k}{r \ell n^{k - 1/m_k(F_2)}} 2^{r^2 \ell^2   n^{k - 1/m_k(F_2)}}=e^{O_{r,\ell}(n^{k - 1/m_k(F_2)}\log n)}.
\]
But then the union bound over all choices of $T^i$ finishes the claim, if we choose $C$ large enough, since the failure probability $e^{-\beta C n^{k - 1/m_k(F_2)}\log n}$ times the number of choices for $T^i$s is $o(1)$. It would be of interest to further remove the $\log n$ factor in Theorem \ref{thm:asymmetric-general}.

 Another research direction concerns sharp thresholds, already mentioned in the introduction. For example, in the case of a triangle and two colours,  $G(n,p)\rightarrow (K_3)$
 has a sharp threshold as proved by Friedgut, R\"odl, Ruci\'nski and Tetali~\cite{friedgut2006sharp}. Building on the work of Friedgut et.al.~\cite{FHPS16} on the sharpness of the threshold for the van der Waerden property in random subsets, Schacht and Schulenburg~\cite{schacht2016sharp} gave a proof for sharpness of a threshold for a class of strictly balanced and nearly bipartite graphs, i.e.\ those graphs that contain an edge whose deletion makes them bipartite. Such a class contains all odd cycles, and thus gives a shorter proof of the result in~\cite{friedgut2006sharp}. The new proof of the sharpness result uses an application of Friedgut's sharpness criterion~\cite{Fri05} in combination with container theorems~\cite{balogh2015independent,saxton2015hypergraph}. %It is plausible and is believed that a threshold which lies in some interval of the type $[cn^{-\theta},Cn^{-\theta}]$ is indeed sharp. So 
 It would be interesting to obtain sharpness results for some asymmetric graph Ramsey properties or even for hypergraphs.
 
In~\cite{nenadov2015algorithmic,thomas2013} we proved the $0$-statement for the threshold probability for $\Hknp\rightarrow (F)_r$, where $r\ge 2$ and $F$ is the complete $k$-uniform hypergraph. This $0$-statement is of the form $cn^{-1/m_k(F)}$ and thus it matches the corresponding bounds for the $1$-statement due to Conlon and Gowers~\cite{conlon2013combinatorial} and Friedgut, R\"odl and Schacht~\cite{friedgut2010ramsey} up to a multiplicative constant. In~\cite{thomas2013},  similar $0$-statements are proved for larger classes of hypergraphs apart from complete graphs. We refer the interested reader to Theorem~6.13 from~\cite{thomas2013}. However, we are far from having a complete characterization of the thresholds for $\Hknp\rightarrow (F)_r$ for general $F$ and $r\ge 2$. It would be interesting to decide whether the connection between asymmetric and symmetric Ramsey properties (Theorem~\ref{thm:main_beta}) occurs in the case of $3$-uniform hypergraphs as well.

\bibliographystyle{plain}
\bibliography{refs}

\newpage
\appendix

%!TEX root = hypergraphs.tex

\section{Appendix}

\subsection{Proof of Lemma \ref{lemma:rdo_l4}}

First we show a simple claim, which is used  later in the proof.

\begin{claim}
\label{claim:xy_disjoint}
Let $H$ be a $3$-uniform hypergraph which does not contain a $1$-intersecting set of size three
and let $x,y \in E(H)$ be two disjoint edges of $H$. Then each edge $e \in E(H) \setminus \{x, y\}$ has to intersect
exactly one of the edges from $\{x,y\}$ and on exactly two vertices.
\end{claim}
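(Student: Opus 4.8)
The plan is a short counting argument applied, for an arbitrary edge $e \in E(H) \setminus \{x, y\}$, to the triple $\{x, y, e\}$. First I would record the elementary bounds coming from the fact that $x$, $y$, $e$ are pairwise distinct $3$-element sets: $|e \cap x| \le 2$ and $|e \cap y| \le 2$, since an intersection of size $3$ would force $e = x$ or $e = y$. Moreover, because $x \cap y = \emptyset$, no vertex of $e$ can lie in both $x$ and $y$, so $|e \cap x| + |e \cap y| \le |e| = 3$; in particular $|e \cap x| = 2$ and $|e \cap y| = 2$ cannot hold simultaneously.

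Next I would use the hypothesis: the edges $x$, $y$, $e$ do not form a $1$-intersecting set, so some two of them meet in at least two vertices. Since $|x \cap y| = 0$, this forces $|e \cap x| \ge 2$ or $|e \cap y| \ge 2$, and by the bounds above this means $|e \cap x| = 2$ or $|e \cap y| = 2$. Together with the fact that both cannot equal $2$, exactly one of $x$, $y$ --- say $z$ --- satisfies $|e \cap z| = 2$. Writing $z'$ for the other edge, we have $z \cap z' = \emptyset$, so the single vertex of $e$ lying outside $z$ is the only one that could lie in $z'$, whence $|e \cap z'| \le 1$. This is exactly the content of the claim.

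I do not anticipate a genuine obstacle: the statement is immediate from the defining property of a $1$-intersecting set together with the pigeonhole observation that a $3$-set cannot contain two vertices of $x$ and two vertices of $y$ when $x$ and $y$ are disjoint. The only point to be careful about is the reading of the conclusion --- "intersects exactly one of $\{x, y\}$ on exactly two vertices" should be understood as "exactly one member of $\{x, y\}$ has intersection of size two with $e$", the other edge possibly still sharing a single vertex with $e$.
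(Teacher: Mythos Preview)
Your argument is correct and matches the paper's proof essentially line for line: both rule out the case $|e\cap x|\le 1$ and $|e\cap y|\le 1$ via the forbidden $1$-intersecting triple $\{x,y,e\}$, and rule out $|e\cap x|=|e\cap y|=2$ by disjointness of $x$ and $y$. Your reading of the conclusion (the other edge may still share one vertex with $e$) is also the intended one.
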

\begin{proof}
Let $e$ be an arbitrary edge from $E(H) \setminus \{x, y\}$. If $e$ intersects both $x$ and $y$ on at most one vertex then set $\{x,y,e\}$ is a 1-intersecting set of size three, which is a contradiction with the assumption of the lemma. On the other hand, if $e$ intersects both $x$ and $y$ on two vertices, then $x$ and $y$ can not be disjoint.
\end{proof}

Next, we show  that Lemma \ref{lemma:rdo_l4} holds if $H$, additionally, does not contain a 1-intersecting set of size three.

\begin{lemma} \label{lemma:l4_special}
Let $H$ be a 3-uniform hypergraph with 7 edges and no 1-intersecting set of size three. There is a 1-intersecting subset $S \subseteq E(H)$ such that $H \setminus S$ does not contain a copy of $T^{3}_6$.
\end{lemma}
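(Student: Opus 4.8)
The plan is the following. If $H$ contains no copy of $T^3_6$ we simply take $S=\emptyset$, so assume $H$ contains a copy $P$ of $T^3_6$ with edges $e_i=\{v_i,v_{i+1},v_{i+2}\}$ ($1\le i\le 4$) on vertices $v_1,\dots,v_6$. Since $e_1$ and $e_4$ are disjoint, Claim~\ref{claim:xy_disjoint} applies: every edge of $H$ other than $e_1,e_4$ meets exactly one of $e_1,e_4$ in exactly two vertices. Write $A$ for the set of edges meeting $e_1$ in two vertices and $B$ for those meeting $e_4$ in two vertices, so $e_2\in A$, $e_3\in B$ and $|A|+|B|=5$.

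The first step is a rigidity statement for $A$ (and symmetrically for $B$): either (a) every edge of $A$ contains the common pair $\{v_2,v_3\}$ (a ``book''), or (b) $A\subseteq\{\{v_2,v_3,v_4\},\{v_1,v_3,v_4\},\{v_1,v_2,v_4\}\}$, the three edges obtained from $e_1$ by swapping one vertex for $v_4$. Indeed, if two edges of $A$ use different $2$-subsets of $e_1$ they must share the same third vertex --- otherwise they, together with $e_4$ (which is far from both), would form a $1$-intersecting triple --- and that third vertex must be $v_4$ because $e_2$ uses it, which gives~(b); if all edges of $A$ use the same $2$-subset of $e_1$ it must be $\{v_2,v_3\}$ since $e_2$ does, which gives~(a). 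In particular, in case~(b) one has $|A|\le 3$.

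The second step splits according to the types of $A,B$. If both are of type~(a), put $p=\{v_2,v_3\}$ and $q=\{v_4,v_5\}$; then $E(H)$ is partitioned into the edges containing $p$ and the edges containing $q$, and inside each book I single out its (at most) two ``special'' edges --- $p\cup\{v_4\},\,p\cup\{v_5\}$ on the $p$-side and $q\cup\{v_2\},\,q\cup\{v_3\}$ on the $q$-side --- from the remaining ``generic'' ones. A short check on how the four edges of a copy of $T^3_6$ can be distributed between the two books (one end lies in each book, and one works out the middle two edges) shows that every copy uses exactly one generic $p$-edge, one special $p$-edge, one special $q$-edge and one generic $q$-edge; moreover all four of these classes are nonempty, witnessed by $e_1,e_2,e_3,e_4$ respectively. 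Since the four class sizes sum to $7$, some class has exactly one element, and taking $S$ to be that class --- a single edge, hence trivially $1$-intersecting --- meets every copy of $T^3_6$, so $H\setminus S$ is $T^3_6$-free. If exactly one of $A,B$ is of type~(b), say $A$, then every edge of $H$ except $e_1$ contains $v_4$; since no vertex lies in all four edges of a copy of $T^3_6$, every copy must use $e_1$, and $S=\{e_1\}$ works. If both $A$ and $B$ are of type~(b), then $|A|,|B|\le 3$ with $|A|+|B|=5$, so $H$ is one of a bounded number of explicit hypergraphs on at most eight named vertices; for each of these one lists the (few) copies of $T^3_6$ and exhibits a $1$-intersecting set $S$ of size at most two hitting all of them.

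The main obstacle is this last case (both $A,B$ of type~(b)): there is no single hub vertex forcing a common edge of all copies, so one genuinely must run through the finitely many configurations, checking in each that the hypergraph really has no $1$-intersecting triple and that some suitable $1$-intersecting pair of edges (often a single edge, such as $e_3$) meets every copy of $T^3_6$; this is the tedious but elementary finite verification carried out in the appendix. A secondary point needing care is the ``distribution of the four edges between the two books'' step in the two-books case, where one has to rule out the seemingly possible configurations (such as all four edges being special) by noting that two edges through the same pair $p$ or $q$ already intersect in two vertices, which is incompatible with the non-consecutive overlaps in $T^3_6$; the anchor edges $e_1,e_2,e_3,e_4$ then guarantee that all four classes are nonempty.
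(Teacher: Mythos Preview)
Your argument is correct and takes a genuinely different route from the paper's. The paper argues by contradiction: it removes a disjoint pair $\{a,b\}$ from some $T^3_6$-copy, applies Claim~\ref{claim:xy_disjoint} to $a,b$ to pin down how the remaining five edges $e_1,\dots,e_4,e^*$ sit relative to $a$ and $b$, and from the resulting constraints shows directly that $H\setminus\{e_1,e_3\}$ contains no $T^3_6$, contradicting the standing assumption. No case enumeration over the global shape of $H$ is needed.

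By contrast, you fix a copy $P$, apply the same claim to its end edges $e_1,e_4$, and classify the remaining five edges into the two ``books'' $A,B$ with the dichotomy (a)/(b). Your rigidity step and the analysis of the (a)(a) and mixed cases are correct; in particular, the four-class decomposition in the (a)(a) case is a nice observation, and the hub-vertex argument in the mixed case is clean. The payoff is a more transparent structural picture of $H$; the cost is that the (b)(b) case is only reduced to an explicit finite check that you do not actually carry out. That check is small --- up to the symmetry $v_i\leftrightarrow v_{7-i}$ there are only the two configurations $A=\{a_1,a_2,a_3\}$, $B=\{b_1,b_2\}$ and $A=\{a_1,a_2,a_3\}$, $B=\{b_1,b_3\}$, and in both every $T^3_6$-copy passes through a single edge ($e_1$ in the first, $e_3=b_1$ in the second) --- but as written your proposal defers it to ``the appendix'', which is circular since this \emph{is} the appendix lemma. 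You should include those two lines of verification explicitly. Also, the vertex set in the (b)(b) case is $\{v_1,\dots,v_6\}$, not eight vertices as you wrote.
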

\begin{proof}
In order to obtain a contradiction, let us assume that for every 1-intersecting set $S \subseteq E(H)$ the graph $H \setminus S$ contains a $T_6^{3}$-copy. We show that this assumption gives us enough information to deduce important information about  the structure of  $H$.
Let $a$ and $b$ be two edges of an arbitrary $T_6^{3}$-copy of $H$ such that $a \cap b = \emptyset$. The set of edges $\{a, b\}$ is a 1-intersecting set and thus $H \setminus \{a, b\}$ contains a $T_6^{3}$-copy. Pick an arbitrary such copy and denote it by $F$. Let $E(F) = \{e_1, e_2, e_3, e_4\}$ be a natural order of edges of $F$ and let us denote the only remaining edge from $E(H) \setminus \{a, b , e_1, e_2, e_3, e_4\}$ by $e^*$. 
By applying Claim \ref{claim:xy_disjoint} to $a$ and $b$ we know that edges $e_1, e_4$ and $e^*$ have to intersect $a$ or $b$ on exactly two vertices. Note that $e_1$ and $e_4$ must not intersect the same edge on two vertices as then they would not be disjoint. Let us assume without loss of generality that
\begin{equation*}
|e_1 \cap a| = 2, \quad |e^* \cap a| = 2, \quad \text{and} \quad |e_4 \cap b| = 2. 
\end{equation*}
From previous observation it then directly follows that
\begin{equation}
\label{eq:obs1}
 |e_1 \cap b | \le 1, \quad |e^* \cap b | \le 1, \quad  \text{and} \quad |e_4 \cap a | \le 1.
\end{equation}
If $e^*$ intersects $e_4$ on two vertices, then $e^*$ intersects $e_1$ on at most one vertex and together with \eqref{eq:obs1} we conclude $\{e_1, b, e^*\}$ is a 1-intersecting set, which is a contradiction. 
Thus, we have
\begin{equation}
\label{eq:obs2}
|e^* \cap e_4| \le 1.
\end{equation}
Next, let us look at the edge $e_2$. If $|e_2 \cap a| \le 1$ then since $|a \cap e_4| \le 1$ by \eqref{eq:obs1} and by the definition of the natural order of the edges $E(F)$ from the path $T^3_6$ we conclude that $\{e_2, a, e_4\}$ is a 1-intersecting set, which is not possible. Thus, we know 
\begin{equation}
\label{eq:obs3}
|e_2 \cap a | = 2.
\end{equation}
Similarily, by \eqref{eq:obs2} and the fact that $|e_2 \cap e_4| =1 $ we obtain
\begin{equation}
\label{eq:obs4}
|e_2 \cap e^* | = 2.
\end{equation}
To finish the proof, it suffices to show that $H \setminus \{e_1, e_3\}$ does not contain a copy of $T_6^{3}$. 
Let $\tilde F$ be an arbitrary $T_6^{3}$-copy from $H \setminus \{e_1, e_3\}$. As there are only five edges in $H \setminus \{e_1, e_3\}$ either both $e_2$ and $b$  or both $e^*$ and $a$ are contained in $E(\tilde F)$. We split the rest of the proof into these two cases:

\paragraph{Case 1:} $\{e_2, b\} \subseteq E(\tilde F)$ \\
By \eqref{eq:obs3} we know $|e_2 \cap b| \le 1$, but we show that actually $|e_2 \cap b| = 1$.
It must be that $|e_3 \cap b| = 2$ as otherwise by \eqref{eq:obs1} the set $\{e_3, b, e_1\}$ is 1-intersecting.
Since $|e_3 \cap b| = 2$ and $|e_3 \cap e_2| = 2$ we get $|e_2 \cap b| \ge 1$ and thus $|e_2 \cap b| = 1$.
By applying Lemma \ref{lemma:core_l_r} to $\tilde F$, $e_2$ (as $a_0$) and $b$ (as $a_1$) we conclude that there  exists an edge
$x \in \{e_4, a, e^*\}$ such that $|x \cap e_2| = 2$ and $|x \cap b| = 2$. 
As $a \cap b = \emptyset$, $|e_4 \cap e_2| = 1$ and $|e^* \cap b| \le 1$ we know that $x \notin \{e_4, a, e^*\}$, which is a contradiction.

\paragraph{Case 2:} $\{e^*, a\} \subseteq E(\tilde F)$ \\
Since $|e^* \cap a| = 2$
by applying Lemma \ref{lemma:core_l_r} to $\tilde F$, $e^*$ (as $a_0$) and $a$ (as $a_1$) we conclude that there  exist an edge 
$x \in \{b, e_2, e_4\}$
such that $|x \cap a | + |x \cap e^*| = 3$.
As $b \cap a = \emptyset$ it can not be that $x = b$. Furthermore, using \eqref{eq:obs3} and \eqref{eq:obs4} we know that $|e_2 \cap a| + |e_2 \cap e^*| = 4$ and thus $x \neq e_2.$ Finally, using \eqref{eq:obs1} and \eqref{eq:obs2} we know $|e_4 \cap a| + |e_4 \cap e^*| \le 2$ and thus $x \neq e_4.$ This concludes the proof.
\end{proof}

Finally, by using lemma above we prove Lemma \ref{lemma:rdo_l4}

\begin{proof}[Proof of Lemma \ref{lemma:rdo_l4}]
If $H$ does not contain a 1-intersecting set of size three, then the lemma follows directly by Lemma \ref{lemma:l4_special}. Let us assume that $H$ contains a 1-intersecting set of size three $\{a, b, c\}$. Moreover, in order to arrive at contradiction we assume that for every 1-intersecting set $S \subseteq E(H)$ there exist a $T^{3}_6$-copy 
contained in $H \setminus S$.
Consequently, as $|E(H) \setminus \{a,b,c\}| = 4$, the hypergraph $H \setminus \{a, b, c \}$ must be isomorphic to $T^{3}_6$. Let $\{v_1, \ldots, v_6\}$ and $\{e_1, e_2, e_3, e_4\}$ be a natural order of vertices and edges of the $T^{3}_6$-copy spanned by $|E(H) \setminus \{a, b, c\}|$ such that
$
e_i = \{v_i, v_{i+1}, v_{i+2}\}.
$
We claim that either the edge $e_1$ or $e_4$ must be 1-intersecting with at least two edges from $\{a, b, c\}$. 
If we assume otherwise, then  there must be an edge in $\{a, b, c \}$, say $a$, such that
both $e_1$ and $e_4$ intersect $a$ on two vertices. However, that is not possible as $e_1 \cap e_4 = \emptyset$.
Without loss of generality, let us assume  $\{e_1, a, b\}$ is a 1-intersecting set. If $e_1$ is 1-intersecting with all the edges from $\{a, b, c\}$, then $H \setminus \{e_1, a, b, c\}$  contains only three edges and can not contain a $T^{3}_6$-copy, thus a contradiction.  In the rest of the proof we consider the case when $|e_1 \cap c| = 2$.

Since $\{e_1, a, b\}$ is a 1-intersecting set we know that $\{c,  e_2, e_3, e_4\}$ induces a copy of $T^{3}_6$.
Furthermore, the assumption $|e_1 \cap c| = 2$ implies $|c \cap e_2| \ge 1$. From $|e_1\cap e_4|=0$ we have $|c\cap e_4|\le 1$. If $|c\cap e_4|=1$ and since  $\{c,  e_2, e_3, e_4\}$ induces a copy of $T^{3}_6$, it follows that 
$|c\cap e_3|=0$ and $|e_2\cap e_4|=2$, which is a contradiction to $|e_2\cap e_4|=1$. 
Therefore we obtain
\begin{equation}
|c \cap e_4| = 0 \; \text{and} \; |c \cap e_2| = 2. \label{eq:xc} 
\end{equation}
% By applying Lemma \ref{lemma:core_l_r} on edges $e_2$ and $e_4$ we conclude that there exist an edge $x \in \{c, e_3\}$ such that on of the following two holds:
% \begin{align}
% & |x \cap e_2| = 0 \; \text{and} \; |x \cap e_4| = 2,  \nonumber \\
% & \hspace{22mm} \text{or}  \nonumber \\
% & |x \cap e_4| = 0 \; \text{and} \; |x \cap e_2| = 2. \label{eq:xc} 
% \end{align}
% Now, since $|e_3 \cap e_2| = |e_3 \cap e_4| = 2$ by definition of natural order we know that $x = c$. In fact, 
% it must be that edge $c$ satisfies \eqref{eq:xc} as otherwise $|c \cap e_4| = 2$ together with $|c \cap e_1| = 2$  imply that $e_1 \cap e_4 \neq \emptyset$, which is a contradiction.

Next, as $e_2$ and $e_4$ are 1-intersecting we know $H \setminus \{e_2, e_4\}$ contains a $T^{3}_6$-copy, denoted by $F$. To conclude the proof, we show that this is impossible.
Let us now consider the following two cases:
\vspace{-4mm}
\paragraph{Case 1:} $\{e_1, e_3\} \not \subseteq E(F)$

Since at most one edge from $\{e_1, e_3\}$ is part of $F$ we conclude $\{a, b, c\} \subseteq E(F)$.
This is a contradiction as by Observation \ref{claim:tl_comp_set} we know that $F$ can not contain a 1-intersecting set of size three.
\vspace{-4mm}
\paragraph{Case 2:} $\{e_1, e_3\} \subseteq E(F)$

Using Lemma \ref{lemma:core_l_r} we conclude that one of the edges from $E(F) \setminus \{e_1, e_3\}$ intersects both $e_1$ and $e_3$ on two vertices. Neither $a$ or $b$ can be such edge as they are 1-intersecting with $e_1$. On the other hand, we know $|c \cap e_4| = 0$ which implies $|c \cap e_3| \le 1$. 
Therefore no edge from set $E(F) \setminus \{e_1, e_3\}$ intersects both $e_1$ and $e_3$ on two vertices, which is a contradiction.
\end{proof}

\end{document}